\newtheorem{theorem}{Theorem}[section]
\newtheorem{lemma}[theorem]{Lemma}
\newtheorem{proposition}[theorem]{Proposition}
\newtheorem{corollary}[theorem]{Corollary}
\theoremstyle{definition}
\newtheorem{example}[theorem]{Example}
\theoremstyle{remark}
\newtheorem{remark}[theorem]{\bf{Remark}}
\numberwithin{equation}{section}
\newcommand{\vertiii}[1]{{\left\vert\kern-0.25ex\left\vert\kern-0.25ex\left\vert #1 
    \right\vert\kern-0.25ex\right\vert\kern-0.25ex\right\vert}}
\begin{document}


\title[Schatten $\MakeLowercase{p}$-norm and numerical radius inequalities with applications] {{ Schatten $\MakeLowercase{p}$-norm and numerical radius inequalities with applications }}

\author[P. Bhunia and S. Sahoo] {Pintu Bhunia and Satyajit Sahoo}

\address{(Bhunia) Department of Mathematics, Indian Institute of Science, Bengaluru 560012, Karnataka, India}
\email{pintubhunia5206@gmail.com; pintubhunia@iisc.ac.in}

\address{(Sahoo) School of Mathematical Sciences, National Institute of Science Education and Research, Bhubaneswar, India}
\email{ssahoomath@gmail.com; ssahoo@niser.ac.in}

\thanks{The first author  would like to thank SERB, Govt. of India for the financial support in the form of National Post Doctoral Fellowship (N-PDF, File No. PDF/2022/000325) under the mentorship of Prof. Apoorva Khare. The second author is thankful to  NISER Bhubaneswar for providing the necessary facilities to carry out this work and also expresses his gratitude to Prof. Anil Kumar Karn.}

\subjclass[2020]{47A30, 47A12, 47A05, 05C50}
\keywords{Numerical radius, $p$-numerical radius, Schatten $p$-norm, Operator norm, Energy of graph}

\date{}
\maketitle
\begin{abstract}
We develop a new refinement of the Kato's inequality and using this refinement we obtain several upper bounds for the numerical radius of a bounded linear operator as well as the product of operators, which improve the well known existing bounds.  Further, we obtain a necessary and sufficient condition for the positivity of $2\times 2$ certain block matrices and using this condition we deduce an upper bound for the numerical radius involving a contraction operator. Furthermore, we study the Schatten $p$-norm inequalities for the sum of two $n\times n$ complex matrices via singular values and  from the inequalities we obtain the $p$-numerical radius and the classical numerical radius bounds. We show that for every $p>0$, the $p$-numerical radius $w_p(\cdot): \mathcal{M}_n(\mathbb C)\to \mathbb R$ satisfies 
$    w_p(T) \leq \frac12 \sqrt{\left\| |T|^{2(1-t)}+|T^*|^{2(1-t)} \right\|^{} \,  \big \||T|^{2t}+|T^*|^{2t}  \big\|_{p/2}^{} }  $  for all $t\in [0,1]$.
Considering $p\to \infty$, we get a nice refinement of the well known classical numerical radius bound $w(T) \leq \sqrt{\frac12 \left\| T^*T+TT^* \right \|}.$ 

\noindent As an application of the Schatten $p$-norm inequalities we develop a bound for the energy of graph. We show that 
   $  \mathcal{E}(G) \geq  \frac{2m}{   \sqrt{ \max_{1\leq i \leq n} \left\{ \sum_{j, v_i \sim v_j}d_j\right\}}  },$
where $\mathcal{E}(G)$ is the energy of a simple graph $G$ with $m$ edges and $n$ vertices $v_1,v_2,\ldots,v_n$ such that degree of $v_i$ is $d_i$ for each $i=1,2,\ldots,n.$


\end{abstract}

%

\tableofcontents

\section{Introduction and motivation}

\noindent 
Suppose $\mathcal{B}(\mathcal{H})$ denotes the $\mathbb{C}^*$-algebra of all bounded linear operators on a complex Hilbert space $\mathcal{H}.$ If $\mathcal{H}$ is finite-dimensional with dimension $n,$ then $\mathcal{B}(\mathcal{H})$ is identified with $\mathcal{M}_n(\mathbb{C})$, the set of all $n\times n$ complex matrices. For $T\in \mathcal{B}(\mathcal{H}),$ $T^*$ denotes the Hilbert adjoint of $T$ and $|T|$ denotes the positive operator $(T^*T)^{1/2}.$ Let $Re(T)=\frac12 (T+T^*)$ and $Im(T)=\frac{1}{2i}(T-T^*)$ be the real and the imaginary part of $T$, respectively. Let $\|T\|$ and $w(T)$ denote the operator norm and the numerical radius of $T$, respectively. Recall that $\|T\|=\sup_{}\{ \|Tx\|: x\in \mathcal{H}, \, \|x\|=1\} $ and $w(T)=\sup_{} \{ |\langle Tx,x\rangle|: x\in \mathcal{H}, \, \|x\|=1\}.$ 
It is easy to verify that $w(T)=\max \{ \|Re(\lambda T)\|: \lambda \in \mathbb{C},\, |\lambda|=1\},$ see \cite{Bhunia_RCMP}.
The numerical radius $w(\cdot): \mathcal{B}(\mathcal{H}) \to \mathbb {R}$ defines a norm and  it satisfies 
\begin{eqnarray}\label{01}
    \frac12 \|T\|\leq w(T) \leq \|T\| \quad \text{ for every $T\in \mathcal{B}(\mathcal{H}).$} 
\end{eqnarray}
 The numerical range and the associated numerical radius have important contribution to study operators and their various analytic and geometric properties. However, the exact value calculation of the numerical radius of an arbitrary operator is still a hard problem. So many mathematicians have been tried to develop different bounds which improve the classical bounds in \eqref{01}, see the books \cite{Book2022, Wu}. 
 One well known improved upper bound is
\begin{eqnarray}\label{k1}
    w(T) &\leq& \frac12  \big\| {|T|+|T^*|} \big\|, 
\end{eqnarray}
given by Kittaneh \cite{Kittaneh_2003}.  
 Another improvement is shown by Bhunia and Paul \cite{Bhunia_RIM_2021}, namely,
 \begin{eqnarray}\label{B2021}
    w(T) &\leq& \sqrt{ \left\| \alpha |T|^2+(1-\alpha)|T^*|^2 \right\|}, \quad \text{for all $\alpha\in [0,1]$}.
\end{eqnarray}
This bound improves the bound  $w(T) \leq \sqrt{\frac12 \left\|   |T^2|+|T^*|^2 \right\|},$ proved by Kittaneh \cite{Kittaneh_STD_2005}. 
For more recent development bounds for the numerical radius, the reader can see the articles \cite{Bhunia_ADM, BKIT, Bhunia_IJPA, Bhunia_FAA, Bhunia_BSM, Jena_Das_Sahoo_Filomat_2023, Kittaneh, Kittaneh_LAMA_2023,  Sababheh_Moradi_Sahoo_LAMA_2024, Sahoo_Rout_AIOT_2022}. 

 One of the most basic useful inequalities to study numerical radius bounds is the Cauchy-Schwarz inequality, i.e., 
$|\left<x,y\right>|\leq \|x\| \|y\|, \, \text{for all }  x,y\in\mathcal{H}.$
	An extension of this inequality is the Buzano's inequality \cite{BuzanoRSMU1974}, namely,
\begin{align}\label{Buzanoineq}
		    \left|\langle x, e\rangle \langle e, y\rangle\right|\leq \frac{\|e\|^2}{2} \left(\|x\|\|y\|+|\langle x, y\rangle|\right), \quad \text{for any $x, y, e\in\mathcal{H}$.}
		\end{align}
Closely related to the Buzano's inequality \eqref{Buzanoineq}, Dragomir \cite[Corollary 1]{Drag2017} proved that for every $T\in \mathcal B(\mathcal H)$,
\begin{equation}\label{buzano4}
\left| \left\langle Tx,Ty \right\rangle  \right|\le \frac{{{\left\| T \right\|}^{2}}}{2}\left( \left| \left\langle x,y \right\rangle  \right|+\left\| x \right\|\left\| y \right\| \right).
\end{equation}
 The Buzano's inequality and its siblings have been interestingly used in the literature to present certain applications of operator inequalities,  one can see in \cite{BuzanoRSMU1974, Drag2017, drag4, sababheh2022}.
 A refinement of the Cauchy-Schwarz inequality is given in \cite[Theorem 2.6]{Kittaneh}, namely, 
\begin{eqnarray}\label{r1}
    |\langle x,y \rangle | & \leq & \left( \|x\|- \frac{ \inf_{\lambda \in \mathbb C}\|x-\lambda y\|^2}{2\|x\|} \right) \|y\|, \quad \text{for  $x,y\in \mathcal{H}$ with $x \neq 0$.}
\end{eqnarray}
A generalization of the Cauchy-Schwarz inequality is the Kato's inequality \cite{Kato}, namely, 
\begin{eqnarray}\label{r2}
    | \langle Tx,y\rangle  | \leq \||T|^tx\|\, \||T^*|^{1-t}y\|, \quad \textit{  $T\in \mathcal{B}(\mathcal{H})$, for all $x,y\in \mathcal{H}$ and for all $ t\in[0,1]$.}
\end{eqnarray}
This is also very useful to study numerical radius bounds of bounded linear operators.

In Section \ref{sec2}, we obtain an improvement of the Kato's inequality \eqref{r2}. In particular, we prove that
\begin{theorem} (See Lemma \ref{th1})
\begin{eqnarray*}
    | \langle Tx,y\rangle  | &\leq& \begin{cases}
        \left( \left\|  |T|^tx  \right\|  - \frac{  \inf_{\lambda \in \mathbb{C}} \left\||T|^tx-\lambda |T|^{1-t}U^*y   \right\|^2} { 2 \left\| |T|^tx\right\| } \right)\left\|  |T^*|^{1-t}y  \right\| \quad \text{when $\|Tx\| \|T^*y\|\neq 0$}\\
        \left\|  |T|^tx  \right\| \left\|  |T^*|^{1-t}y  \right\| \quad  \text{when $\|Tx\| \|T^*y\|= 0$,} \quad \textit{ \textit{for all $ t\in[0,1]$}.}
    \end{cases}
        \end{eqnarray*}
\end{theorem}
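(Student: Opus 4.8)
The plan is to run the standard polar-decomposition proof of Kato's inequality \eqref{r2}, but to replace the final use of the Cauchy--Schwarz inequality by its refinement \eqref{r1}. Writing the polar decomposition $T=U|T|$ with $U$ the associated partial isometry, and splitting $|T|=|T|^{1-t}|T|^{t}$, I would first record the identity
\[
\langle Tx,y\rangle=\langle U|T|x,y\rangle=\langle |T|^{t}x,\,|T|^{1-t}U^{*}y\rangle.
\]
Applying ordinary Cauchy--Schwarz here recovers \eqref{r2}; instead I would apply \eqref{r1} with $x$ replaced by $|T|^{t}x$ and $y$ replaced by $|T|^{1-t}U^{*}y$. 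This substitution is permitted exactly when the first vector is nonzero, which accounts for the case division: since $\||T|^{t}x\|=0$ forces $Tx=0$ (and the analogous statement holds on the other factor), the hypothesis $\|Tx\|\,\|T^{*}y\|\neq 0$ guarantees $|T|^{t}x\neq 0$, so that \eqref{r1} is available.

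In the nondegenerate case this yields
\[
|\langle Tx,y\rangle|\leq\left(\left\||T|^{t}x\right\|-\frac{\inf_{\lambda\in\mathbb{C}}\left\||T|^{t}x-\lambda|T|^{1-t}U^{*}y\right\|^{2}}{2\left\||T|^{t}x\right\|}\right)\left\||T|^{1-t}U^{*}y\right\|,
\]
and it remains only to replace the trailing factor $\||T|^{1-t}U^{*}y\|$ by $\||T^{*}|^{1-t}y\|$. This identification is the step I expect to demand the most care, precisely because $U$ is merely a partial isometry. I would base it on the intertwining relation $|T^{*}|=U|T|U^{*}$, which, upon applying the functional calculus (valid here since the relevant functions vanish at $0$, so the behaviour on $\ker T$ is immaterial), upgrades to $U|T|^{2(1-t)}U^{*}=|T^{*}|^{2(1-t)}$. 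Absorbing the inner factor $U^{*}U$ using that it is the projection onto $\overline{\operatorname{ran}|T|}$ and that $|T|$ has range there, this gives
\[
\left\||T|^{1-t}U^{*}y\right\|^{2}=\langle |T|^{2(1-t)}U^{*}y,U^{*}y\rangle=\langle U|T|^{2(1-t)}U^{*}y,y\rangle=\langle |T^{*}|^{2(1-t)}y,y\rangle=\left\||T^{*}|^{1-t}y\right\|^{2},
\]
which substitutes directly into the previous display to complete this case.

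Finally, in the degenerate case $\|Tx\|\,\|T^{*}y\|=0$ I would simply observe that $\|Tx\|=\||T|x\|$ and $\|T^{*}y\|=\||T^{*}|y\|$, so either $Tx=0$ or $T^{*}y=0$; in either event $\langle Tx,y\rangle=0$, while the asserted right-hand side $\||T|^{t}x\|\,\||T^{*}|^{1-t}y\|$ is nonnegative. Hence the stated inequality holds trivially and coincides with Kato's bound \eqref{r2}.
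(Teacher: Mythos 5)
Your argument is correct and is essentially the paper's own proof in streamlined form: both reduce via the polar decomposition to $\langle Tx,y\rangle=\langle |T|^{t}x,|T|^{1-t}U^{*}y\rangle$, use the identity $\||T|^{1-t}U^{*}y\|=\||T^{*}|^{1-t}y\|$, and invoke the refinement of Cauchy--Schwarz coming from Lemma~\ref{lem1} (the paper re-derives \eqref{r1} inline through the difference-of-squares quotient bounded by Kato's inequality, whereas you cite \eqref{r1} directly, which yields the identical bound). Your handling of the case split and of the partial-isometry subtlety matches what the paper needs, so no gap.
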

Using the above improvement we develop various refinements of the existing numerical radius bounds. We show that
\begin{theorem} (See Theorem \ref{th2}) 
    \begin{eqnarray*}
        w(T) &\leq& \frac12 \left\| |T|^{2t}+|T^*|^{2(1-t)} \right\|- \lim_{n\to \infty }\frac{\left\| |T^*|^{1-t}x_n\right\|}{2\| |T|^t x_n\|} \inf_{\lambda \in \mathbb C} \left\| \left(|T|^t-\lambda|T|^{1-t}U^* \right)x_n\right \|^2,
    \end{eqnarray*} 
   \textit{for all $t\in [0,1]$} and $w(T)=\lim_{n\to \infty } | \langle Tx_n,x_n\rangle  |.$ 
\end{theorem}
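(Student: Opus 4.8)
The plan is to feed the improved Kato inequality of Lemma \ref{th1} into the variational description $w(T)=\sup_{\|x\|=1}|\langle Tx,x\rangle|$, and then to squeeze the resulting product of norms by the arithmetic--geometric mean inequality so that the operator $|T|^{2t}+|T^*|^{2(1-t)}$ emerges. First I would fix $t\in[0,1]$ and choose a maximizing sequence of unit vectors $(x_n)$, that is, $\|x_n\|=1$ with $|\langle Tx_n,x_n\rangle|\to w(T)$; such a sequence exists directly from the definition of the numerical radius, and the hypothesis $w(T)=\lim_{n\to\infty}|\langle Tx_n,x_n\rangle|$ in the statement records exactly this choice. I may assume $T\neq0$, so that after discarding finitely many terms one has $\|Tx_n\|\,\|T^*x_n\|\neq0$ and the first (nontrivial) branch of Lemma \ref{th1} applies with $x=y=x_n$.

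Next I would apply Lemma \ref{th1} to the pair $x=y=x_n$ and expand its right-hand side as
\[
|\langle Tx_n,x_n\rangle|\le \big\||T|^tx_n\big\|\,\big\||T^*|^{1-t}x_n\big\|-\frac{\big\||T^*|^{1-t}x_n\big\|}{2\big\||T|^tx_n\big\|}\,\inf_{\lambda\in\mathbb C}\big\|(|T|^t-\lambda|T|^{1-t}U^*)x_n\big\|^2 .
\]
To the leading product I would apply the estimate $\||T|^tx_n\|\,\||T^*|^{1-t}x_n\|\le\tfrac12(\||T|^tx_n\|^2+\||T^*|^{1-t}x_n\|^2)$, and then rewrite the two squared norms as the quadratic forms $\langle|T|^{2t}x_n,x_n\rangle$ and $\langle|T^*|^{2(1-t)}x_n,x_n\rangle$. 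Since $x_n$ is a unit vector, their sum equals $\langle(|T|^{2t}+|T^*|^{2(1-t)})x_n,x_n\rangle\le\||T|^{2t}+|T^*|^{2(1-t)}\|$, which bounds the leading term by $\tfrac12\||T|^{2t}+|T^*|^{2(1-t)}\|$. Combining the two displays,
\[
|\langle Tx_n,x_n\rangle|\le \tfrac12\big\||T|^{2t}+|T^*|^{2(1-t)}\big\|-\frac{\big\||T^*|^{1-t}x_n\big\|}{2\big\||T|^tx_n\big\|}\,\inf_{\lambda\in\mathbb C}\big\|(|T|^t-\lambda|T|^{1-t}U^*)x_n\big\|^2 .
\]

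Finally I would pass to the limit. The subtracted correction term is a nonnegative sequence bounded above by $\tfrac12\||T|^{2t}+|T^*|^{2(1-t)}\|-|\langle Tx_n,x_n\rangle|$, hence bounded, so after passing to a subsequence (along which $|\langle Tx_n,x_n\rangle|$ still converges to $w(T)$) I may assume it converges; letting $n\to\infty$ then yields the asserted inequality, with the limit of the correction term playing the role stated in the theorem. The degenerate possibility $\|Tx_n\|\,\|T^*x_n\|=0$ along the maximizing sequence forces $|\langle Tx_n,x_n\rangle|\to0$, i.e.\ $w(T)=0$ and $T=0$, in which case the statement is trivial.

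The main obstacle is the bookkeeping at the limit: the correction term is genuinely $n$-dependent (it contains the ratio $\||T^*|^{1-t}x_n\|/\||T|^tx_n\|$ together with the infimum over $\lambda$), so I must justify that a single maximizing sequence can be chosen along which both $|\langle Tx_n,x_n\rangle|\to w(T)$ and the correction term converges, and I must keep the denominator $\||T|^tx_n\|$ bounded away from $0$. The latter is automatic once $T\neq0$: if $\||T|^tx_n\|\to0$ then $|\langle Tx_n,x_n\rangle|\le\||T|^tx_n\|\,\||T^*|^{1-t}\|\to0$, contradicting $w(T)>0$. Everything else reduces to the two elementary estimates already available, namely the refined Cauchy--Schwarz inequality \eqref{r1} encoded in Lemma \ref{th1} and the arithmetic--geometric mean inequality.
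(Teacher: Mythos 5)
Your proposal is correct and follows essentially the same route as the paper: substitute $x=y=x_n$ into Lemma \ref{th1}, bound the leading product $\||T|^tx_n\|\,\||T^*|^{1-t}x_n\|$ by the arithmetic--geometric mean inequality to produce $\tfrac12\|\,|T|^{2t}+|T^*|^{2(1-t)}\,\|$, and pass to the limit. Your extra care about the existence of the limit of the correction term (via a subsequence) and about keeping $\||T|^tx_n\|$ away from zero fills in details the paper leaves implicit, but it is the same argument.
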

This is a non-trivial refinement of $w(T)
        \leq \frac12 \left\| |T|^{2t}+|T^*|^{2(1-t)} \right\|, \, \textit{for all $t\in [0,1]$},$ proved by El-Haddad and Kittaneh \cite[Theorem 1]{El-Haddad and F. Kittaneh}.
Among other bounds we obtain that
\begin{theorem} (See Theorem \ref{111})   
\begin{eqnarray*}
w\left( T \right) &\le&  \frac{{{\left\| T \right\|}^{1-t}}}{2}\left( \left\| {{\left| T \right|}^{t}}+{{\left| {{T}^{*}} \right|}^{t}} \right\|  - \lim_{n\to \infty }\frac{\left\| |T^*|^{\frac{t}{2}}x_n\right\|}{2\left\||T|^\frac{t}{2}x_n\right\|}\inf_{\lambda\in \mathbb{C}}\left\|(|T|^{\frac{t}{2}}-\lambda |T|^{\frac{t}{2}}U^*)x_n\right\|^2\right),
\end{eqnarray*}
for all $t\in [0,1]$. 
\end{theorem}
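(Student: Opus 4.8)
The plan is to reduce the estimate for $w(T)$ to the single auxiliary operator $S:=U|T|^{t}$, where $T=U|T|$ is the polar decomposition. First I would record the behaviour of $S$ under the functional calculus: since $U^{*}U$ acts as the identity on $\overline{\mathrm{ran}}\,|T|$, one has $S^{*}S=|T|^{t}U^{*}U|T|^{t}=|T|^{2t}$ and $SS^{*}=U|T|^{2t}U^{*}=|T^{*}|^{2t}$, so that $|S|=|T|^{t}$, $|S^{*}|=|T^{*}|^{t}$, and $S=U|S|$ is itself a polar decomposition with the same partial isometry $U$. The crucial reduction I would establish is the scalar inequality
\[
w(T)\ \le\ \|T\|^{1-t}\,w\!\left(U|T|^{t}\right),\qquad t\in[0,1],
\]
which extracts the factor $\|T\|^{1-t}$ and transfers the whole problem to $S$. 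Combined with Kittaneh's bound \eqref{k1} applied to $S$, this already reproduces the unrefined estimate $w(T)\le\frac{\|T\|^{1-t}}{2}\||T|^{t}+|T^{*}|^{t}\|$; the role of Theorem \ref{111} is to feed the \emph{refined} Kato inequality into the estimate of $w(S)$ in place of \eqref{k1}.

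Once the reduction is in hand, I would apply Lemma \ref{th1} to the operator $S$ with $y=x$ and parameter $1/2$. Because $|S|^{1/2}=|T|^{t/2}$, $|S^{*}|^{1/2}=|T^{*}|^{t/2}$ and the partial isometry of $S$ is $U$, the lemma gives, for every unit vector $x$ with $\|Sx\|\,\|S^{*}x\|\neq0$,
\[
|\langle Sx,x\rangle|\ \le\ \left(\||T|^{t/2}x\|-\frac{\inf_{\lambda\in\mathbb{C}}\|(|T|^{t/2}-\lambda|T|^{t/2}U^{*})x\|^{2}}{2\||T|^{t/2}x\|}\right)\||T^{*}|^{t/2}x\|.
\]
Then I would invoke the arithmetic--geometric mean inequality in the form $\||T|^{t/2}x\|\,\||T^{*}|^{t/2}x\|\le\frac12(\||T|^{t/2}x\|^{2}+\||T^{*}|^{t/2}x\|^{2})=\frac12\langle(|T|^{t}+|T^{*}|^{t})x,x\rangle\le\frac12\||T|^{t}+|T^{*}|^{t}\|$, carrying the subtracted refinement term through unchanged. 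Evaluating this pointwise inequality along a sequence of unit vectors $x_{n}$ that maximizes $S$, namely $w(S)=\lim_{n}|\langle Sx_{n},x_{n}\rangle|$ (passing to a subsequence so that the refinement term converges), and finally multiplying by $\|T\|^{1-t}$ via the reduction, yields the asserted bound. The degenerate vectors with $\|Sx\|\,\|S^{*}x\|=0$ are harmless, since there the refinement term is absent and the inequality is the trivial one.

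The main obstacle is the reduction inequality $w(T)\le\|T\|^{1-t}w(U|T|^{t})$ itself: the naive pointwise statement $|\langle Tx,x\rangle|\le\|T\|^{1-t}|\langle U|T|^{t}x,x\rangle|$ is \emph{false} (it fails on $2\times2$ examples such as $T=\bigl(\begin{smallmatrix}0&1\\2&0\end{smallmatrix}\bigr)$ at $x=\tfrac{1}{\sqrt2}(1,i)$), so the factor $\|T\|^{1-t}$ cannot be pulled out vector by vector and the inequality must be argued at the level of the numerical radius. The clean route is to write $\langle Tx,x\rangle=\langle|T|^{1-t}(|T|^{t/2}x),\,|T|^{t/2}U^{*}x\rangle$, use $\||T|^{1-t}\|=\|T\|^{1-t}$ together with the identity $\||T|^{t/2}U^{*}x\|=\||T^{*}|^{t/2}x\|$ (which follows from $|T^{*}|^{t}=U|T|^{t}U^{*}$), and then organize the supremum so that the inner product $\langle U|T|^{t}x,x\rangle$, rather than the product of norms, governs the bound; this is precisely where it matters that $x_{n}$ is taken to be the maximizing sequence of $S$ and not of $T$. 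A secondary point to monitor is the bookkeeping of constants: carrying the argument through gives at least the stated estimate (in fact a marginally stronger one, since my route subtracts the full refinement term rather than half of it), which suffices.
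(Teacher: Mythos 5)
Your outline is structurally different from the paper's and it hinges on a step you never actually supply: the reduction $w(T)\le \|T\|^{1-t}\,w(U|T|^{t})$. Your facts about $S=U|T|^{t}$ (that $|S|=|T|^{t}$, $|S^{*}|=|T^{*}|^{t}$, same partial isometry) are correct, and the second half of your argument would go through if the reduction held. But the route you sketch for the reduction does not work: the factorization $\langle Tx,x\rangle=\langle |T|^{1-t}(|T|^{t/2}x),|T|^{t/2}U^{*}x\rangle$ followed by Cauchy--Schwarz bounds $|\langle Tx,x\rangle|$ by $\|T\|^{1-t}\||T|^{t/2}x\|\,\||T^{*}|^{t/2}x\|$ --- a product of norms, not the inner product $|\langle U|T|^{t}x,x\rangle|$ --- so "organizing the supremum" so that $\langle U|T|^{t}x,x\rangle$ governs the bound is a placeholder, not an argument. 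Nor is there a general principle to fall back on: $w(SP)\le\|P\|\,w(S)$ for positive $P$ is false in general (take $S=\bigl[\begin{smallmatrix}0&1\\0&0\end{smallmatrix}\bigr]$ and $P$ the rank-one projection onto the span of $(1,1)$; then $w(SP)=\frac{1+\sqrt2}{4}>\frac12=\|P\|\,w(S)$), so if the reduction is true for the special pair $S=U|T|^{t}$, $P=|T|^{1-t}$ it needs a genuine proof. A secondary mismatch: even granting the reduction, your refinement term is evaluated along a maximizing sequence for $w(U|T|^{t})$, whereas the theorem in the paper takes $x_{n}$ with $w(T)=\lim_{n}|\langle Tx_{n},x_{n}\rangle|$; these are in general different sequences, so you would be proving a formally different statement.

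The paper avoids the reduction entirely. It writes $T=AXB$ with $A=U|T|^{t/2}$, $X=|T|^{1-t}$, $B=|T|^{t/2}$ and invokes the Buzano--Dragomir inequality $|\langle Xu,v\rangle|\le\frac{\|X\|}{2}\left(|\langle u,v\rangle|+\|u\|\|v\|\right)$ for positive $X$ (Lemma \ref{Lemma-product}); this is exactly the device that extracts the factor $\frac{\|T\|^{1-t}}{2}$ \emph{pointwise}, at the cost of the averaged right-hand side, after which the refinement \eqref{r1} is applied to the term $|\langle |T|^{t/2}x,|T|^{t/2}U^{*}x\rangle|$ to give Proposition \ref{Thm_3prod}, and one then sets $x=y=x_{n}$ for the maximizing sequence of $w(T)$ itself. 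To repair your proposal you must either prove $w(T)\le\|T\|^{1-t}w(U|T|^{t})$ honestly or replace that step with the Buzano argument; as written, the central claim of your proof is unsupported.
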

This is a non-trivial refinement of $w(T) \le \frac{{{\left\| T \right\|}^{1-t}}}{2}  \left\| {{\left| T \right|}^{t}}+{{\left| {{T}^{*}} \right|}^{t}} \right\|, $  for all $ t\in [0,1],$ recently proved by Bhunia \cite[Corollary 2.13]{Bhunia_LAA_2024}.
Further, we develop an upper bound for the numerical radius of the product of operators. In particular, we prove that
\begin{theorem} (See Theorem \ref{1})  If $ X\geq 0$, then   
\begin{eqnarray*}
w \left( AXB \right) &\le&  \frac{\left\| X \right\|}{2}\left(\left\| {{\left| {{A}^{*}} \right|}^{2}}+{{\left| B \right|}^{2}} \right\|- \lim_{n\to \infty }\frac{\left\| A^*x_n\right\|}{2\|Bx_n\|}\inf_{\lambda\in \mathbb{C}}\left\|(B-\lambda A^*)x_n\right\|^2\right) \\
&\le&  \frac{\left\| X \right\|}{2} \left\| {{\left| {{A}^{*}} \right|}^{2}}+{{\left| B \right|}^{2}} \right\|.
\end{eqnarray*}
\end{theorem}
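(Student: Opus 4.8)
The plan is to combine the classical factorization argument for the numerical radius of a product with the refined Cauchy--Schwarz inequality \eqref{r1} (equivalently the refined Kato inequality of Lemma \ref{th1}). First I would select a sequence of unit vectors $x_n$ with $w(AXB)=\lim_{n\to\infty}|\langle AXBx_n,x_n\rangle|$. Since $X\ge 0$ factors as $X=X^{1/2}X^{1/2}$ with $X^{1/2}\ge 0$ self-adjoint, I would rewrite $\langle AXBx_n,x_n\rangle=\langle XBx_n,A^*x_n\rangle=\langle X^{1/2}Bx_n,\,X^{1/2}A^*x_n\rangle$, recasting the quantity to be estimated as an honest inner product of $p_n:=X^{1/2}Bx_n$ and $q_n:=X^{1/2}A^*x_n$.

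Next I would apply \eqref{r1} to the pair $(p_n,q_n)$ on the indices where $\|p_n\|\,\|q_n\|\ne 0$ (the remaining indices contribute nothing in the limit), giving $|\langle p_n,q_n\rangle|\le \|p_n\|\,\|q_n\|-\frac{\|q_n\|}{2\|p_n\|}\inf_{\lambda\in\mathbb{C}}\|p_n-\lambda q_n\|^2$. The leading term I would bound by the arithmetic--geometric mean inequality $\|p_n\|\,\|q_n\|\le \tfrac12(\|p_n\|^2+\|q_n\|^2)$, and then by the operator monotonicity coming from $0\le X\le \|X\|I$, namely $B^*XB\le \|X\|\,|B|^2$ and $AXA^*\le \|X\|\,|A^*|^2$. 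Since $\|p_n\|^2=\langle B^*XBx_n,x_n\rangle$, $\|q_n\|^2=\langle AXA^*x_n,x_n\rangle$ and $\|x_n\|=1$, this caps the leading term by $\frac{\|X\|}{2}\,\|\,|A^*|^2+|B|^2\,\|$. Carrying the nonnegative correction term through both estimates and passing to the limit yields $w(AXB)\le \frac{\|X\|}{2}\,\|\,|A^*|^2+|B|^2\,\|-\liminf_{n}\frac{\|q_n\|}{2\|p_n\|}\inf_{\lambda\in\mathbb{C}}\|p_n-\lambda q_n\|^2$, and the second inequality in the statement is then immediate once the correction is discarded.

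The step I expect to be the main obstacle is matching this correction term to the precise unweighted form asserted. The factorization produces the $X^{1/2}$-weighted quantity $\frac{\|X^{1/2}A^*x_n\|}{2\|X^{1/2}Bx_n\|}\inf_{\lambda}\|X^{1/2}(B-\lambda A^*)x_n\|^2$, whereas the theorem records the unweighted expression $\frac{\|A^*x_n\|}{2\|Bx_n\|}\inf_{\lambda}\|(B-\lambda A^*)x_n\|^2$ carried by the outer factor $\|X\|$. The naive pointwise comparison fails, since $X^{1/2}$ may contract exactly the directions $Bx_n$, $A^*x_n$ and $(B-\lambda A^*)x_n$, so the weighted correction can be strictly smaller than the claimed unweighted one. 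The reconciliation must therefore use the extremal nature of the maximizing sequence: only for vectors $x_n$ nearly attaining $w(AXB)$ are the preceding inequalities asymptotically tight, forcing the relevant vectors to concentrate on the top spectral subspace of $X$, where $X^{1/2}$ acts as multiplication by $\|X\|^{1/2}$ and each weighted norm becomes asymptotically $\|X\|^{1/2}$ times its unweighted counterpart. Establishing this concentration rigorously, and verifying that it survives the infimum over $\lambda$ and the passage to the limit, is the heart of the argument.
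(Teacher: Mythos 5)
There is a genuine gap, and you have correctly located it yourself: your factorization $\langle AXBx_n,x_n\rangle=\langle X^{1/2}Bx_n,X^{1/2}A^*x_n\rangle$ followed by \eqref{r1} produces the $X^{1/2}$-weighted correction term $\frac{\|X^{1/2}A^*x_n\|}{2\|X^{1/2}Bx_n\|}\inf_{\lambda}\|X^{1/2}(B-\lambda A^*)x_n\|^2$, which can be strictly smaller than the unweighted term $\frac{\|X\|}{2}\cdot\frac{\|A^*x_n\|}{2\|Bx_n\|}\inf_{\lambda}\|(B-\lambda A^*)x_n\|^2$ asserted in the statement, so the first inequality does not follow. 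The repair you sketch does not work: there is no reason a maximizing sequence for $w(AXB)$ must concentrate on the top spectral subspace of $X$. Your heuristic that the preceding inequalities must be ``asymptotically tight'' along the extremal sequence is false --- if the Cauchy--Schwarz and AM--GM steps were tight at the maximizer, the correction term would vanish and the refinement would be vacuous; the entire content of the theorem is that a positive gap survives at the maximizer. So the concentration step, which you acknowledge is the heart of the argument, cannot be established.

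The paper avoids this issue by ordering the two estimates the other way around. It first applies Dragomir's operator Buzano-type inequality \eqref{buzano4}, in the form \eqref{5}, namely $|\langle Xu,v\rangle|\le\frac{\|X\|}{2}\bigl(|\langle u,v\rangle|+\|u\|\,\|v\|\bigr)$ for $X\ge0$, with $u=Bx$ and $v=A^*y$. This peels off the factor $\frac{\|X\|}{2}$ immediately and leaves the \emph{unweighted} quantities $|\langle Bx,A^*y\rangle|$ and $\|Bx\|\,\|A^*y\|$; only then is the refined Cauchy--Schwarz inequality \eqref{r1} applied to the unweighted pair $(Bx,A^*y)$, which yields exactly the correction term in the statement (this is Lemma \ref{Lemma-product}). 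Putting $y=x=x_n$, bounding $2\|Bx\|\|A^*x\|\le\langle(|A^*|^2+|B|^2)x,x\rangle$, and passing to the limit finishes the proof. If you want to salvage your route, you could keep your weighted correction term and state a theorem with that term instead --- it is a valid bound --- but it is a different (and in general incomparable) refinement from the one claimed.
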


\noindent In Section \ref{sec2-1}, we obtain a necessary and sufficient condition for the positivity of $2\times 2$ block matrix $\left[ \begin{matrix}
   A & {{C}^{*}}  \\
   C & B  \\
\end{matrix} \right]\in \mathcal B\left( \mathcal H\oplus \mathcal H \right)$. In particular, we show that

\begin{theorem} (See Theorem \ref{lem_3.1})
    Suppose $A,B,C\in \mathcal B\left( \mathcal H \right)$ with $A\geq 0$ and $B\geq 0$. Then
  $\left[ \begin{matrix}
   A & {{C}^{*}}  \\
   C & B  \\
\end{matrix} \right]\in \mathcal B\left( \mathcal H\oplus \mathcal H \right)$ is positive  if and only if there is a contraction $K$ such that
$\left| \left\langle Cx,y \right\rangle  \right| \le \sqrt{\left\langle {{A}^{\frac{1}{2}}}{{\left| K \right|}}{{A}^{\frac{1}{2}}}x,x \right\rangle \left\langle {{B}^{\frac{1}{2}}}{{\left| {{K}^{*}} \right|}}{{B}^{\frac{1}{2}}}y,y \right\rangle }$ 
for all $x,y\in \mathcal H$. 
\end{theorem}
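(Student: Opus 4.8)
The plan is to prove both implications by reducing positivity of the block matrix to a single scalar estimate, and, for the nontrivial direction, to combine the Smul'jan-type factorization of positive $2\times 2$ block operators with the mixed Schwarz inequality. Throughout write $M := \begin{bmatrix} A & C^* \\ C & B\end{bmatrix}$.

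For the reverse implication I would first note that $M\ge 0$ on $\mathcal H\oplus\mathcal H$ if and only if $\langle M(x\oplus y),\,x\oplus y\rangle = \langle Ax,x\rangle+\langle By,y\rangle+2\operatorname{Re}\langle Cx,y\rangle\ge 0$ for all $x,y\in\mathcal H$. Replacing $y$ by $e^{i\theta}y$ and optimizing over $\theta$ leaves $\langle Ax,x\rangle$ and $\langle By,y\rangle$ unchanged while driving $2\operatorname{Re}\langle Cx,y\rangle$ down to $-2|\langle Cx,y\rangle|$, so positivity of $M$ is equivalent to the symmetric estimate $\langle Ax,x\rangle+\langle By,y\rangle\ge 2|\langle Cx,y\rangle|$ for all $x,y$. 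Given the hypothesized contraction $K$, the elementary AM--GM inequality applied to the assumed bound gives $2|\langle Cx,y\rangle|\le \langle A^{1/2}|K|A^{1/2}x,x\rangle+\langle B^{1/2}|K^*|B^{1/2}y,y\rangle$; since $\|K\|\le 1$ forces $|K|\le I$ and $|K^*|\le I$, monotonicity yields $A^{1/2}|K|A^{1/2}\le A$ and $B^{1/2}|K^*|B^{1/2}\le B$, and the required estimate follows immediately.

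For the forward implication I would invoke the classical factorization for positive block operators (Smul'jan's lemma). Conjugating $M$ by the self-adjoint swap unitary $\begin{bmatrix}0&I\\I&0\end{bmatrix}$ gives $\begin{bmatrix} B & C \\ C^* & A\end{bmatrix}\ge 0$, which is equivalent to $M\ge 0$, and Smul'jan's lemma applied to this matrix produces a contraction $K$ with $C=B^{1/2}KA^{1/2}$. Writing the polar decomposition $K=U|K|$ and sliding factors across the inner product gives $\langle Cx,y\rangle=\langle |K|A^{1/2}x,\,U^*B^{1/2}y\rangle$. The mixed Schwarz inequality for the positive operator $|K|$ then yields $|\langle Cx,y\rangle|\le \langle A^{1/2}|K|A^{1/2}x,x\rangle^{1/2}\,\langle U|K|U^*B^{1/2}y,\,B^{1/2}y\rangle^{1/2}$, and the polar-decomposition identity $U|K|U^*=|K^*|$ converts the second factor into $\langle B^{1/2}|K^*|B^{1/2}y,y\rangle^{1/2}$, which is exactly the asserted bound.

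The routine parts are the reverse direction and the bookkeeping of moving operators across the inner product. I expect the delicate points to be twofold, both in the forward direction. First, I must secure the \emph{one-sided} factorization $C=B^{1/2}KA^{1/2}$ with the correct placement of $A^{1/2}$ and $B^{1/2}$ (not $A^{1/2}KB^{1/2}$), which is precisely why the swap-congruence is needed. Second, the identity $U|K|U^*=|K^*|$ must be justified carefully because $U$ is only a partial isometry: one checks $(U|K|U^*)^2=U|K|^2U^*=KK^*=|K^*|^2$ using $U^*U|K|=|K|$, and then takes positive square roots. Getting these two ingredients right is what turns the plain Smul'jan factorization into the sharpened Schwarz-type characterization claimed in the statement.
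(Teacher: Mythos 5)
Your proof is correct and follows essentially the same route as the paper: the forward direction rests on the Smul'jan factorization $C=B^{1/2}KA^{1/2}$ followed by the mixed Schwarz inequality $|\langle Kz,w\rangle|^2\le\langle |K|z,z\rangle\langle |K^*|w,w\rangle$, and the converse uses $|K|\le I$, $|K^*|\le I$ to reduce to the standard positivity criterion for the block matrix. The only difference is that you re-derive the two ingredients the paper simply cites (the mixed Schwarz inequality, via the polar decomposition and the identity $U|K|U^*=|K^*|$, and the quadratic-form characterization of positivity via the AM--GM step), which makes your argument slightly longer but self-contained.
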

\noindent Applying this condition, in Theorem \ref{lem_3.2}, we obtain an upper bound for the numerical radius, which improves recently developed bound \cite[Theorem 2.5]{Sababheh_Moradi_Sahoo_LAMA_2024}.

 \noindent
 
 In Section \ref{sec3}, we study the Schatten $p$-norm and $p$-numerical radius bounds for $n\times n$ complex matrices.
For a matrix $T\in \mathcal{M}_n(\mathbb{C})$, the $p$-numerical radius of  $T$ is defined as $$w_p(T)= \max \{ \|Re(\lambda T)\|_p: \lambda\in \mathbb{C},\, |\lambda|=1\}, \quad p>0 $$ 
where $\|\cdot\|_p$ is the Schatten $p$-norm. Recall that $\|T\|_p=\left( \sum_{j=1}^n s_j^p(T)\right)^{1/p}= \left( \text{trace}\, |T|^p \right)^{1/p},$ where $s_1(T)\geq s_2(T)\geq \ldots \geq s_n(T)$ are the singular values of $T$, i.e., the eigenvalues of $|T|.$ For $p=\infty$, $\|T\|_p=\|T\|=s_1(T)$ and so $w_p(T)=w(T)$ is the classical numerical radius of $T.$
For $p\geq 1$, the Schatten $p$-norm defines a norm on $\mathcal{M}_n(\mathbb{C}).$ For $1\leq p\leq q \leq \infty,$ $\|T\|\leq \|T\|_q\leq \|T\|_p\leq \|T\|_1.$ 
The Schatten $p$-norm is very useful to study matrix theory and compact operators also. Over the years many researchers have been studied various results on it, see \cite{Natoor1, Natoor,Benm_LAMA,Bhunia_G,Conde_LAA} and the references therein. 
Here, we develop the Schatten $p$-norm inequalities for the sum of two $n\times n$ complex matrices via singular values. In particular, we show that
\begin{theorem}  (See Theorem \ref{th2-1})  
    Let $T,S \in \mathcal{M}_n(\mathbb{C})$ and $e,f,g,h$ are non-negative continuous functions on $[0,\infty)$ such that $f(t)g(t)=t$ and $e(t)h(t)=t$ for all $t\geq 0.$ Then 
    \begin{eqnarray*}
        s_j(T+S) \leq \left\| g^2(|T^*|)+h^2(|S^*|) \right\|^{1/2} \, s_j^{1/2} \left(f^2(|T|)+e^2(|S|)  \right) \quad \text{for every $j=1,2, \ldots,n$.}
    \end{eqnarray*}
    Furthermore, 
    \begin{eqnarray}\label{eq1-22}
        \|T+S\|_p \leq \left\| g^2(|T^*|)+h^2(|S^*|) \right\|^{1/2} \,  \left \|f^2(|T|)+e^2(|S|)  \right\|_{p/2}^{1/2} \quad \text{for every $p>0$ }.
    \end{eqnarray}
\end{theorem}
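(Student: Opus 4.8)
The plan is to reduce both inequalities to a single factorization of $T+S$ as a product of a row operator and a column operator, and then to apply the standard singular value inequality $s_j(AB)\le \|A\|\,s_j(B)$ (see, e.g., Bhatia, \emph{Matrix Analysis}). First I would fix polar decompositions $T=U|T|$ and $S=V|S|$, where $U,V$ are partial isometries, and record the intertwining identity $U\,\phi(|T|)=\phi(|T^*|)\,U$, valid for every continuous function $\phi$ on $[0,\infty)$. This follows from $|T^*|=U|T|U^*$ together with $U^*U\,|T|^k=|T|^k$ for $k\ge 1$; writing $\phi=\phi(0)+\widetilde\phi$ with $\widetilde\phi(0)=0$, the constant term contributes $\phi(0)U$ to both sides, so no vanishing-at-zero hypothesis on $\phi$ is needed. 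Using $f(t)g(t)=t$ and the commutativity of functions of $|T|$, I would then write
\[
T=U|T|=U\,g(|T|)\,f(|T|)=g(|T^*|)\,U\,f(|T|),
\]
and likewise $S=h(|S^*|)\,V\,e(|S|)$. Hence
\[
T+S=\begin{bmatrix} g(|T^*|) & h(|S^*|)\end{bmatrix}\begin{bmatrix} U\,f(|T|)\\ V\,e(|S|)\end{bmatrix}=:R\,C.
\]

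Next I would evaluate the two factors. Since $g(|T^*|)$ and $h(|S^*|)$ are self-adjoint, $RR^*=g^2(|T^*|)+h^2(|S^*|)$, whence $\|R\|=\big\|g^2(|T^*|)+h^2(|S^*|)\big\|^{1/2}$. For the column factor,
\[
C^*C=f(|T|)\,U^*U\,f(|T|)+e(|S|)\,V^*V\,e(|S|)\le f^2(|T|)+e^2(|S|),
\]
because $U^*U$ and $V^*V$ are orthogonal projections and $A^*PA\le A^*A$ whenever $0\le P\le I$. By Weyl's monotonicity principle this yields $s_j(C)^2=s_j(C^*C)\le s_j\big(f^2(|T|)+e^2(|S|)\big)$. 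Combining with $s_j(T+S)=s_j(RC)\le \|R\|\,s_j(C)$ gives the stated singular value inequality.

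For the Schatten $p$-norm bound I would raise the singular value inequality to the power $p$, sum over $j$, factor out the constant $\big\|g^2(|T^*|)+h^2(|S^*|)\big\|^{p/2}$, and recognize
\[
\sum_{j=1}^n s_j^{p/2}\big(f^2(|T|)+e^2(|S|)\big)=\big\|f^2(|T|)+e^2(|S|)\big\|_{p/2}^{p/2};
\]
taking the $p$-th root then finishes \eqref{eq1-22}.

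I expect the main obstacle to be the factorization step, specifically establishing $T=g(|T^*|)\,U\,f(|T|)$ rigorously: one must handle the partial isometry $U$ and its behaviour on $\ker|T|$, and confirm that the intertwining identity holds for general continuous $f,g$ rather than only those vanishing at the origin. Once $T+S=RC$ is secured, the remaining steps are routine applications of well-known singular value facts ($s_j(AB)\le\|A\|\,s_j(B)$ and Weyl monotonicity) and a straightforward passage from singular values to the Schatten $p$-norm.
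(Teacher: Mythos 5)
Your proposal is correct, but it follows a genuinely different route from the paper. The paper never factorizes $T+S$: it instead observes (via Lemma \ref{lem2-2} and the equivalence in Lemma \ref{lemma12}) that the block matrices $\left[\begin{smallmatrix} f^2(|T|) & T^*\\ T & g^2(|T^*|)\end{smallmatrix}\right]$ and $\left[\begin{smallmatrix} e^2(|S|) & S^*\\ S & h^2(|S^*|)\end{smallmatrix}\right]$ are positive, adds them, and reads off the mixed Cauchy--Schwarz inequality $|\langle (T+S)x,y\rangle|^2\le \langle (f^2(|T|)+e^2(|S|))x,x\rangle\,\langle (g^2(|T^*|)+h^2(|S^*|))y,y\rangle$; taking the supremum over $\|y\|=1$ bounds $\|(T+S)x\|^2$ by a quadratic form, and the min--max characterization of $s_j$ (Lemma \ref{lem2-3}) then delivers the singular value inequality. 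Your argument instead builds the explicit row--column factorization $T+S=RC$ with $R=[\,g(|T^*|)\ \ h(|S^*|)\,]$ and $C=\left[\begin{smallmatrix} Uf(|T|)\\ Ve(|S|)\end{smallmatrix}\right]$, using the intertwining relation $U\phi(|T|)=\phi(|T^*|)U$ (which you justify correctly, including the constant term; in $\mathcal{M}_n(\mathbb{C})$ one may even take $U,V$ unitary and skip the $\ker|T|$ discussion), and then invokes $s_j(RC)\le\|R\|\,s_j(C)$ together with Weyl monotonicity applied to $C^*C\le f^2(|T|)+e^2(|S|)$. The two routes are morally dual --- positivity of the $2\times 2$ block is equivalent to the existence of such a factorization --- but yours makes the origin of the bound structurally transparent at the cost of handling the polar decomposition, while the paper's stays at the level of quadratic forms, reuses machinery already established in Section \ref{sec2-1}, and transfers more directly to the infinite-dimensional inner-product inequalities used elsewhere in the paper. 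The passage from the singular value inequality to the Schatten $p$-norm bound is identical in both.
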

Among other Schatten $p$-norm inequalities, from the bound \eqref{eq1-22}, we deduce the following $p$-numerical radius bound for matrices.
\begin{theorem}   (See Corollary \ref{cor2-2}) 
    \begin{eqnarray*}
        w_p(T) &\leq& \frac12\left\| |T|^{2(1-t)}+|T^*|^{2(1-t)} \right\|^{1/2} \,  \left \||T|^{2t}+|T^*|^{2t}  \right\|_{p/2}^{1/2},
      \end{eqnarray*}  
  \text{for every $p>0$ and  for all $t\in [0,1]$ }.
Considering $p\to \infty$, we get
    \begin{eqnarray}\label{cor2-2-1-0}
        w^2(T) &\leq& \frac14\left\| |T|^{2(1-t)}+|T^*|^{2(1-t)} \right\|^{} \,  \left \||T|^{2t}+|T^*|^{2t}  \right\|_{}^{}. 
      \end{eqnarray}  
\end{theorem}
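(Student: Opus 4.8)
The plan is to obtain this bound as a direct specialization of the Schatten $p$-norm inequality \eqref{eq1-22} in Theorem \ref{th2-1}, combined with the variational description of the $p$-numerical radius. The starting point is the identity $w_p(T) = \max_{|\lambda|=1} \|Re(\lambda T)\|_p = \frac12 \max_{|\lambda|=1}\|\lambda T + \bar\lambda T^*\|_p$, which reduces matters to bounding the Schatten norm of the Hermitian combination $\lambda T + \bar\lambda T^*$ uniformly over the unit circle.

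First I would apply \eqref{eq1-22} with the two matrices taken to be $\lambda T$ and $\bar\lambda T^*$ (that is, $T\mapsto \lambda T$ and $S\mapsto \bar\lambda T^*$ in the notation of Theorem \ref{th2-1}). The crucial observation is that multiplication by a unimodular scalar leaves all four relevant moduli unchanged: $|\lambda T| = |T|$, $|(\lambda T)^*| = |T^*|$, $|\bar\lambda T^*| = |T^*|$, and $|(\bar\lambda T^*)^*| = |T|$. Hence the right-hand side of \eqref{eq1-22} will be assembled entirely from $|T|$ and $|T^*|$ and will carry no dependence on $\lambda$, which is exactly what makes the subsequent maximization trivial.

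Next I would fix the generating functions. Choosing $f(s)=e(s)=s^{t}$ and $g(s)=h(s)=s^{1-t}$ on $[0,\infty)$, these are non-negative and continuous for every $t\in[0,1]$ and satisfy the admissibility conditions $f(s)g(s)=s$ and $e(s)h(s)=s$. Feeding them into \eqref{eq1-22} converts the first factor $\|g^2(|(\lambda T)^*|)+h^2(|(\bar\lambda T^*)^*|)\|^{1/2}$ into $\||T^*|^{2(1-t)}+|T|^{2(1-t)}\|^{1/2}$ and the second factor $\|f^2(|\lambda T|)+e^2(|\bar\lambda T^*|)\|_{p/2}^{1/2}$ into $\||T|^{2t}+|T^*|^{2t}\|_{p/2}^{1/2}$, while the left-hand side becomes $\|\lambda T+\bar\lambda T^*\|_p = 2\|Re(\lambda T)\|_p$. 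Since the estimate no longer involves $\lambda$, taking the maximum over $|\lambda|=1$ and dividing by $2$ produces the claimed $p$-numerical radius bound. The limiting inequality \eqref{cor2-2-1-0} then follows by letting $p\to\infty$, using $\|\cdot\|_p\to\|\cdot\|$ (hence $\|\cdot\|_{p/2}\to\|\cdot\|$ as well) and $w_p(T)\to w(T)$ as recorded in the introduction, and then squaring.

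I do not expect a genuine obstacle, as the argument is in essence a single substitution; the one point deserving care is the verification that the four moduli are invariant under the unimodular factor, since this decoupling from $\lambda$ is precisely what licenses the maximization step and forms the conceptual core of the reduction.
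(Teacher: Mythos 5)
Your proposal is correct and follows essentially the same route as the paper: the paper first substitutes $e^{i\theta}T$ and $e^{-i\theta}T^*$ into Theorem \ref{th2-1} to get the general bound of Theorem \ref{cor2-1} (for arbitrary admissible $e,f,g,h$), and then specializes to $f(s)=e(s)=s^{t}$, $g(s)=h(s)=s^{1-t}$ to obtain Corollary \ref{cor2-2}; you simply perform the substitution and the specialization in one step. The key points you flag — invariance of the four moduli under unimodular scalars and the resulting $\lambda$-independence of the bound before maximizing — are exactly what the paper's argument relies on.
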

The bound \eqref{cor2-2-1-0} refines the well known bound $w^2(T) \leq \frac12 \left\| T^*T+TT^*\right\|,$ given by Kittaneh \cite{Kittaneh_STD_2005}.

Finally, in Section \ref{sec4}, as an application of the Schatten $p$-norm inequalities we develop a lower bound for the energy of a graph which is introduced by Gutman \cite{Gutman} in connection to the
total $\pi$-electron energy in Chemical Science. 
In particular, we prove that
\begin{theorem} (See Theorem \ref{Graph-lower bound})   
 \begin{eqnarray*}
     \mathcal{E}(G) &\geq&  \frac{2m}{   \sqrt{ \max_{1\leq i \leq n} \left\{ \sum_{j, v_i \sim v_j}d_j\right\}}  },  
\end{eqnarray*}
where $m$ is the number of edges of a simple graph $G$ and $d_i$ is the degree of the vertex $v_i,$ for $i=1,2,\ldots,n.$ 
\end{theorem}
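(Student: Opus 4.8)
The plan is to interpret the graph energy as a Schatten trace-norm and then lower-bound it by combining an elementary interpolation inequality between Schatten norms with a spectral estimate on the adjacency matrix. Write $A=(a_{ij})$ for the adjacency matrix of $G$; it is a real symmetric $n\times n$ matrix, so its eigenvalues $\lambda_1\ge\cdots\ge\lambda_n$ are real and its singular values are $s_i(A)=|\lambda_i|$. By definition $\mathcal{E}(G)=\sum_{i=1}^n|\lambda_i|=\sum_{i=1}^n s_i(A)=\|A\|_1$, the Schatten $1$-norm. This identification is precisely what brings the Schatten $p$-norm inequalities of the previous section into play.

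First I would record the interpolation inequality $\|A\|_2^2\le \|A\|_\infty\,\|A\|_1$, which is immediate from $\sum_i s_i^2(A)\le s_1(A)\sum_i s_i(A)$ together with $\|A\|_\infty=s_1(A)=\|A\|$. Rearranging yields the lower bound $\mathcal{E}(G)=\|A\|_1\ge \|A\|_2^2/\|A\|$. The numerator is easy to evaluate: since the diagonal entries of $A^2$ record the vertex degrees, $\|A\|_2^2=\operatorname{trace}(A^2)=\sum_{i=1}^n d_i=2m$ by the handshake lemma. Thus $\mathcal{E}(G)\ge 2m/\|A\|$, and everything reduces to an upper bound on the operator norm $\|A\|$.

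The main obstacle is to bound $\|A\|$ by the neighbour-degree sums. I would use that $A$ is Hermitian, so $\|A\|^2=\|A^2\|$, and then estimate the operator norm of the positive semidefinite matrix $A^2$. The key combinatorial identity is that its $i$-th row sum is $\sum_{j=1}^n (A^2)_{ij}=\sum_{k=1}^n a_{ik}\Big(\sum_{j=1}^n a_{kj}\Big)=\sum_{k:\,v_i\sim v_k} d_k$, i.e.\ exactly the sum of the degrees of the neighbours of $v_i$. Since $A^2$ has nonnegative entries, I would dominate its spectral norm by this maximal row sum: taking a unit eigenvector $x$ for the largest eigenvalue of $A^2$ and using the symmetrization $2|x_i||x_j|\le x_i^2+x_j^2$ together with $(A^2)_{ij}\ge0$, one obtains $\|A^2\|=\langle A^2x,x\rangle\le \sum_{i,j}(A^2)_{ij}x_i^2=\sum_{i=1}^n x_i^2\Big(\sum_{j=1}^n(A^2)_{ij}\Big)\le \max_{1\le i\le n}\sum_{j:\,v_i\sim v_j} d_j$. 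Hence $\|A\|\le \sqrt{\max_{i}\sum_{j:\,v_i\sim v_j} d_j}$. Justifying this row-sum domination of the spectral norm (equivalently, invoking Perron--Frobenius for the nonnegative matrix $A^2$) is the delicate point of the argument.

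Finally I would combine the two bounds: $\mathcal{E}(G)\ge 2m/\|A\|\ge 2m\big/\sqrt{\max_{1\le i\le n}\sum_{j:\,v_i\sim v_j} d_j}$, which is the asserted inequality. As a consistency check, for the path $P_3$ one has $m=2$, $\max_i\sum_{j\sim i}d_j=2$ and $\mathcal{E}(P_3)=2\sqrt2$, so both sides equal $2\sqrt2$; this shows that the estimate is sharp and that the approach cannot be improved at the level of these two ingredients.
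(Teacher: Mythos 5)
Your proof is correct and follows essentially the same route as the paper: both arguments rest on the chain $2m=\operatorname{trace}(A^2)=\|A\|_2^2\le\|A\|\,\|A\|_1=\|A\|\,\mathcal{E}(G)$ combined with the spectral estimate $\|A\|\le\sqrt{\max_{1\le i\le n}\sum_{j:\,v_i\sim v_j}d_j}$. The only difference is that you supply self-contained (and sound) proofs of both ingredients — the interpolation inequality via $\sum_i s_i^2\le s_1\sum_i s_i$ and the spectral bound via the row sums of the nonnegative symmetric matrix $A^2$ — whereas the paper obtains the former as the $p=2$ case of its Schatten $p$-norm inequality \eqref{appl} and cites the latter from \cite{Bhunia_E}.
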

This bound improves $\mathcal{E}(G)\geq 2\sqrt{m}$  (see \cite{C_lower})  for a certain class of graphs. We provide computational examples to illustrate the bounds.


\section{A refinement of the Kato's inequality and numerical radius bounds}\label{sec2}

\noindent

In this section, we obtain a refinement of the Kato's inequality \eqref{r2} and using the refinement we develop upper bounds for the numerical radius of bounded linear operators, which improve the existing bounds. To prove the results first we need the following lemma which follows from the identity $ \|x\|^2 \|y\|^2-|\langle x,y \rangle|^2= \|y\|^2  \|x-\lambda y\|^2-|\langle y,x-\lambda y\rangle|^2$ for all $\lambda \in \mathbb C$ and $x,y\in \mathcal{H}.$

\begin{lemma}\label{lem1} \cite{Kittaneh}
    If $x,y\in \mathcal{H},$ then 
    $ \|x\|^2 \|y\|^2-|\langle x,y \rangle|^2= \|y\|^2 \inf_{\lambda \in \mathbb C} \|x-\lambda y\|^2.$
\end{lemma}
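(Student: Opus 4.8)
The plan is to deduce the stated identity directly from the pointwise (in $\lambda$) identity quoted just before the statement, namely $\|x\|^2 \|y\|^2-|\langle x,y \rangle|^2= \|y\|^2 \|x-\lambda y\|^2-|\langle y,x-\lambda y\rangle|^2$ for every $\lambda\in\mathbb{C}$. First I would verify this identity by a routine expansion: using $\|x-\lambda y\|^2 = \|x\|^2 - 2\,\mathrm{Re}\,(\bar\lambda\langle x,y\rangle) + |\lambda|^2\|y\|^2$ together with $\langle y, x-\lambda y\rangle = \langle y,x\rangle - \bar\lambda\|y\|^2$, one checks that the $\lambda$-dependent contributions on the right-hand side cancel, leaving precisely the left-hand side. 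This step is purely computational and presents no difficulty.

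The key observation is that the left-hand side is independent of $\lambda$, whereas on the right-hand side the only $\lambda$-dependent term is $|\langle y,x-\lambda y\rangle|^2 \geq 0$. Rearranging the identity gives
$$\|y\|^2 \|x-\lambda y\|^2 = \big(\|x\|^2 \|y\|^2-|\langle x,y \rangle|^2\big) + |\langle y,x-\lambda y\rangle|^2 \;\geq\; \|x\|^2 \|y\|^2-|\langle x,y \rangle|^2$$
for every $\lambda$. Hence minimizing the left-hand side over $\lambda$ amounts to minimizing the single nonnegative quantity $|\langle y,x-\lambda y\rangle|^2$, and the lower bound above is the candidate value of the infimum.

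For $y\neq 0$ I would then exhibit the minimizer explicitly: with the convention that $\langle\cdot,\cdot\rangle$ is linear in the first slot, the choice $\lambda_0 = \langle x,y\rangle/\|y\|^2$ gives $\langle y, x-\lambda_0 y\rangle = \langle y,x\rangle - \bar\lambda_0\|y\|^2 = 0$, so the inequality becomes an equality and the infimum is attained. Dividing by $\|y\|^2$ and multiplying back yields $\|y\|^2\inf_{\lambda\in\mathbb C}\|x-\lambda y\|^2 = \|x\|^2\|y\|^2 - |\langle x,y\rangle|^2$. The degenerate case $y=0$ is disposed of separately and trivially, since both sides then vanish. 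There is no serious obstacle in this argument; the only points requiring care are fixing the inner-product convention when computing $\lambda_0$ and noting explicitly that the infimum is actually \emph{attained} at $\lambda_0$, which is what upgrades the inequality to the claimed equality.
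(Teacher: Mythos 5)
Your proposal is correct and follows exactly the route the paper indicates: it derives the lemma from the $\lambda$-independent identity $\|x\|^2\|y\|^2-|\langle x,y\rangle|^2=\|y\|^2\|x-\lambda y\|^2-|\langle y,x-\lambda y\rangle|^2$ quoted immediately before the statement, with the infimum attained at $\lambda_0=\langle x,y\rangle/\|y\|^2$ where the nonnegative term $|\langle y,x-\lambda y\rangle|^2$ vanishes. The paper leaves these details implicit (the lemma is cited from the Kittaneh--Zamani reference), but your filled-in argument, including the separate treatment of $y=0$, is exactly the intended one.
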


 Here we mention that a simple proof of the inequality \eqref{r1} can be derived from Lemma \ref{lem1}, since
    $  \|y\|^2 \inf_{\lambda \in \mathbb C} \|x-\lambda y\|^2 = \|x\|^2 \|y\|^2-|\langle x,y \rangle|^2
      \leq 2 \|x\| \|y\| (\|x\| \|y\|-|\langle x,y \rangle|).
      $
We now obtain a refinement of the Kato's inequality \eqref{r2}, which also generalizes the refinement of the Cauchy-Schwarz inequality \eqref{r1}.


\begin{lemma}\label{th1}
    Let $T\in \mathcal{B}(\mathcal{H})$ and $T=U|T|$ be the polar decomposition of $T.$ Then for all $x,y \in \mathcal{H}$, we have 
\begin{eqnarray*}
    | \langle Tx,y\rangle  | &\leq& \begin{cases}
        \left( \left\|  |T|^tx  \right\|  - \frac{  \inf_{\lambda \in \mathbb{C}} \left\||T|^tx-\lambda |T|^{1-t}U^*y   \right\|^2} { 2 \left\| |T|^tx\right\| } \right)\left\|  |T^*|^{1-t}y  \right\| \quad \text{when $\|Tx\| \|T^*y\|\neq 0$}\\
        \left\|  |T|^tx  \right\| \left\|  |T^*|^{1-t}y  \right\| \quad  \text{when $\|Tx\| \|T^*y\|= 0$,}
    \end{cases}
        \end{eqnarray*}
  \textit{for all $ t\in[0,1]$}. In particular, for $t=\frac12$, we have
\begin{eqnarray*}
    | \langle Tx,y\rangle  | &\leq& 
    \begin{cases}
        \left( \left\|  |T|^{1/2}x  \right\| - \frac{  \inf_{\lambda \in \mathbb{C}} \left\||T|^{1/2} \left( x-\lambda U^*y  \right) \right\|^2} { 2 \left\| |T|^{1/2}x\right\| } \right)  \left\|  |T^*|^{1/2}y  \right\| \quad \text{when $\|Tx\| \|T^*y\|\neq 0$}\\
        \left\|  |T|^{1/2}x  \right\| \left\|  |T^*|^{1/2}y  \right\| \quad  \text{when $\|Tx\| \|T^*y\|= 0$.}
    \end{cases}
\end{eqnarray*}
  
\end{lemma}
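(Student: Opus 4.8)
The plan is to run the standard polar-decomposition argument behind Kato's inequality \eqref{r2}, but to replace the final Cauchy--Schwarz step by the refined inequality \eqref{r1} (equivalently Lemma \ref{lem1}). Writing $T=U|T|$ and using $|T|^t|T|^{1-t}=|T|$ together with the self-adjointness of $|T|^{1-t}$, I would first record, for every $t\in[0,1]$,
\[
\langle Tx,y\rangle=\langle U|T|x,y\rangle=\langle |T|x,U^*y\rangle=\langle |T|^t x,\,|T|^{1-t}U^*y\rangle .
\]
Thus $\langle Tx,y\rangle=\langle a,b\rangle$ with $a:=|T|^t x$ and $b:=|T|^{1-t}U^*y$, which turns the problem into a purely Hilbert-space inner-product inequality.

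The second ingredient is the norm identity $\|b\|=\||T^*|^{1-t}y\|$. For this I would invoke the relation $|T^*|^{s}=U|T|^{s}U^*$ for $s\ge 0$: it follows from $|T^*|^2=TT^*=U|T|^2U^*$, the positivity of $U|T|U^*$ and uniqueness of the positive square root, and then extends to the power $s$ via the functional calculus using $U^*U\,|T|=|T|$ (so that $f(U|T|U^*)=U f(|T|)U^*$ for continuous $f$ with $f(0)=0$). Consequently
\[
\||T^*|^{1-t}y\|^2=\langle U|T|^{2(1-t)}U^*y,y\rangle=\langle |T|^{2(1-t)}U^*y,U^*y\rangle=\|b\|^2,
\]
so $\|b\|=\||T^*|^{1-t}y\|$, while trivially $\|a\|=\||T|^t x\|$. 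This recovers Kato's inequality upon applying plain Cauchy--Schwarz to $\langle a,b\rangle$.

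With these reductions the refinement is immediate. When $\|Tx\|\neq 0$ we have $\|Tx\|=\||T|x\|\neq0$, hence $a=|T|^t x\neq0$ (the kernels of $|T|^t$ and $|T|$ coincide for $t>0$, with the endpoint absorbed into the convention $|T|^0=$ support projection), so \eqref{r1} applied to $\langle a,b\rangle$ gives
\[
|\langle a,b\rangle|\le\Big(\|a\|-\tfrac{\inf_{\lambda\in\mathbb C}\|a-\lambda b\|^2}{2\|a\|}\Big)\|b\|,
\]
which after substituting $a,b$ and the norm identities is exactly the first branch. In the complementary case $\|Tx\|\,\|T^*y\|=0$ the denominator may vanish, so I would fall back to the bare Cauchy--Schwarz step $|\langle a,b\rangle|\le\|a\|\|b\|$, yielding the second branch (and noting $\langle Tx,y\rangle=0$ there). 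The $t=\tfrac12$ specialization then follows by setting $t=\tfrac12$ and factoring $|T|^{1/2}x-\lambda|T|^{1/2}U^*y=|T|^{1/2}(x-\lambda U^*y)$.

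The only place demanding genuine care, and hence the main obstacle, is the operator identity $|T^*|^{1-t}=U|T|^{1-t}U^*$ and the accompanying bookkeeping with the partial isometry $U$ (its initial and final projections $U^*U$, $UU^*$), in particular checking the functional-calculus step and the correct interpretation of the zeroth power at the endpoints $t\in\{0,1\}$. Once that identity is in hand, the rest is the formal insertion of the Kato vectors $a,b$ into the already-established scalar inequality \eqref{r1}.
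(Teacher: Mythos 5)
Your proposal is correct and follows essentially the same route as the paper: both reduce $\langle Tx,y\rangle$ to the inner product $\langle |T|^t x,\,|T|^{1-t}U^*y\rangle$, identify $\||T|^{1-t}U^*y\|=\||T^*|^{1-t}y\|$, and then apply the refined Cauchy--Schwarz bound. The only cosmetic difference is that you invoke \eqref{r1} directly, whereas the paper re-derives it in place by writing the gap as a quotient, bounding the denominator with Kato's inequality \eqref{r2}, and applying Lemma \ref{lem1} --- which is exactly the computation underlying \eqref{r1}.
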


\begin{proof}
If $\|Tx\| \|T^*y\|= 0$ then the result follows from \eqref{r2}, so we only prove the case when $\|Tx\| \|T^*y\|\neq 0.$
  We have
  \begin{eqnarray*}
      && \langle |T|^{2t}x,x\rangle^{1/2} \langle |T^*|^{2(1-t)}y,y\rangle^{1/2}-|\langle Tx,y\rangle|\\
      &=&  \frac{\langle |T|^{2t}x,x\rangle^{} \langle |T^*|^{2(1-t)}y,y\rangle^{}-|\langle Tx,y\rangle|^2}{\langle |T|^{2t}x,x\rangle^{1/2} \langle |T^*|^{2(1-t)}y,y\rangle^{1/2}+|\langle Tx,y\rangle| }\\
      &\geq&  \frac{\| |T|^{t}x\|^2 \| |T^*|^{1-t}y\|^2-|\langle  |T|^t x,|T|^{1-t}U^*y\rangle|^2}{2 \langle |T|^{2t}x,x\rangle^{1/2} \langle |T^*|^{2(1-t)}y,y\rangle^{1/2} } \quad (\text{by using \eqref{r2}})\\
      &=&  \frac{\| |T|^{t}x\|^2 \| |T|^{1-t}U^*y\|^2-|\langle  |T|^t x,|T|^{1-t}U^*y\rangle|^2}{2 \langle |T|^{2t}x,x\rangle^{1/2} \langle |T^*|^{2(1-t)}y,y\rangle^{1/2} }\\
&=& \frac{\left\|  |T^*|^{1-t}y  \right\|^2  \inf_{\lambda \in \mathbb{C}} \left\||T|^tx-\lambda |T|^{1-t}U^*y   \right\|^2} { 2 \left\| |T|^tx\right\| \left\|  |T^*|^{1-t}y  \right\|} \quad (\textit{by Lemma \ref{lem1}}).
\end{eqnarray*}
This implies
\begin{eqnarray*}
    | \langle Tx,y\rangle  | \leq \langle |T|^{2t}x,x\rangle^{1/2} \langle |T^*|^{2(1-t)}y,y\rangle^{1/2}- \frac{\left\|  |T^*|^{1-t}y  \right\|^2  \inf_{\lambda \in \mathbb{C}} \left\||T|^tx-\lambda |T|^{1-t}U^*y   \right\|^2} { 2 \left\| |T|^tx\right\| \left\|  |T^*|^{1-t}y  \right\|},
\end{eqnarray*}
as desired.
\end{proof}

From Lemma \ref{th1}, we obtain the following inequality for positive operators.

\begin{proposition} \label{cor1}
    If $T\in \mathcal{B}(\mathcal{H})$ is positive then for all $x,y \in \mathcal{H}$, we have
    \begin{eqnarray*}
    | \langle Tx,y\rangle  | &\leq& \begin{cases}
        \left(  \left\|  T^tx  \right\|  - \frac{  \inf_{\lambda \in \mathbb{C}} \left\|T^tx-\lambda T^{1-t}y   \right\|^2} { 2 \left\| T^tx\right\| } \right) \left\|  T^{1-t}y  \right\| \quad  \text{when $\|Tx\| \|Ty\|\neq 0$} \\
        \left\|  T^tx  \right\| \left\|  T^{1-t}y  \right\| \quad  \text{when $\|Tx\| \|Ty\|= 0$,}
    \end{cases}
\end{eqnarray*}
 \textit{for all $ t\in[0,1]$}. In particular, for $t=\frac12$, we have
\begin{eqnarray*}
    | \langle Tx,y\rangle  | &\leq& \begin{cases}
        \left( \left\|  T^{1/2}x  \right\|  - \frac{  \inf_{\lambda \in \mathbb{C}} \left\| T^{1/2} \left( x-\lambda y  \right) \right\|^2} { 2 \left\| T^{1/2}x\right\| } \right ) \left\|  T^{1/2}y  \right\| \quad  \text{when $\|Tx\| \|Ty\|\neq 0$} \\
        \left\|  T^{1/2}x  \right\| \left\|  T^{1/2}y  \right\| \quad  \text{when $\|Tx\| \|Ty\|= 0$.}
        
    \end{cases} 
\end{eqnarray*}
  
\end{proposition}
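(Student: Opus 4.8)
The plan is to derive this proposition directly from Lemma~\ref{th1} by specializing to a positive operator, so essentially no new computation is needed. The first observation is that when $T\geq 0$ we have $T^*=T$, whence $|T|=(T^*T)^{1/2}=(T^2)^{1/2}=T$ and likewise $|T^*|=|T|=T$. Under these identifications every occurrence of $|T|^t$ and $|T^*|^{1-t}$ in Lemma~\ref{th1} becomes $T^t$ and $T^{1-t}$ respectively, and the hypothesis $\|Tx\|\,\|T^*y\|\neq 0$ becomes $\|Tx\|\,\|Ty\|\neq 0$.

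The one point that requires attention is the partial isometry $U$ appearing in the polar decomposition $T=U|T|$. For a positive operator, $|T|=T$ forces $U$ to act as the identity on $\overline{\mathrm{range}(T)}$ and as zero on $\ker T$; that is, $U=U^*=P$, where $P$ is the orthogonal projection onto $\overline{\mathrm{range}(T)}=(\ker T)^{\perp}$. Since $T^{1-t}$ annihilates $\ker T$ for each $t\in[0,1]$ (with the boundary value $T^0$ read as the support projection $P$), we have $T^{1-t}P=T^{1-t}$, and therefore $|T|^{1-t}U^*y=T^{1-t}Py=T^{1-t}y$. In particular the term $\inf_{\lambda\in\mathbb{C}}\||T|^tx-\lambda|T|^{1-t}U^*y\|^2$ in Lemma~\ref{th1} collapses to $\inf_{\lambda\in\mathbb{C}}\|T^tx-\lambda T^{1-t}y\|^2$.

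Substituting all of these reductions into the statement of Lemma~\ref{th1} yields exactly the two cases of the asserted inequality, and setting $t=\tfrac12$ recovers the displayed special case. The argument is thus a routine specialization; the only genuine subtlety is the verification that $|T|^{1-t}U^*y=T^{1-t}y$, i.e.\ that the partial isometry $U$ can be absorbed harmlessly, which is precisely where the positivity of $T$ is used in an essential way.
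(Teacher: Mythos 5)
Your proposal is correct and matches the paper's approach exactly: the paper offers no separate proof, stating only that the proposition follows from Lemma~\ref{th1} by specializing to a positive operator, which is precisely what you do. In fact you supply the one detail the paper leaves implicit — that for $T\geq 0$ the partial isometry in $T=U|T|$ is the support projection, so $|T|^{1-t}U^*y=T^{1-t}y$ — and your treatment of that point (including the boundary convention at $t=1$) is sound.
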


Here, we consider an example to show that Lemma \ref{th1} is a proper refinement of \eqref{r2}.

\begin{example}
    Take $\mathcal{H}={\mathbb C}^n$, $T=I,$ $x=(1,0,0,\ldots,0)$ and $y=(\frac{1}{\sqrt{2}}, \frac{1}{\sqrt{2}},0,\ldots,0).$
We have
\begin{eqnarray*}
 | \langle Tx,y\rangle  |=\frac{1}{\sqrt{2}} 
 &<& \frac{3}{4}=\left( \left\|  |T|^tx  \right\|  - \frac{  \inf_{\lambda \in \mathbb{C}} \left\||T|^tx-\lambda |T|^{1-t}U^*y   \right\|^2} { 2 \left\| |T|^tx\right\| } \right)\left\|  |T^*|^{1-t}y  \right\| \\
 &<& 1=\||T|^tx\| \||T^*|^{1-t}y\|.
\end{eqnarray*}
\end{example}

Applying Lemma \ref{th1}, we now obtain a necessary and sufficient condition for which the Kato's inequality \eqref{r2} becomes equality. 
Note that when $\|Tx\| \|T^*y\|= 0$ then
$ | \langle Tx,y\rangle  | = \||T|^tx\| \||T^*|^{1-t}y\|=0$  \textit{ for every $t\in [0,1]$ }.

\begin{proposition}
     Let $T\in \mathcal{B}(\mathcal{H})$ and let $x,y \in \mathcal{H}$ be such that $\|Tx\| \|T^*y\|\neq 0.$ Then
$$ | \langle Tx,y\rangle  | = \||T|^tx\| \||T^*|^{1-t}y\| \quad \textit{ for every $t\in [0,1]$ }$$
if and only if 
$$\inf_{\lambda \in \mathbb{C}} \left\||T|^tx-\lambda |T|^{1-t}U^*y   \right\|=0\quad \textit{$(T=U|T|$ is the polar decomposition of $T).$}$$
\end{proposition}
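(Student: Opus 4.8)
The plan is to reduce Kato's inequality \eqref{r2} at a fixed $t$ to an ordinary Cauchy--Schwarz inequality and then read off the equality case from Lemma \ref{lem1}. The starting point is the exact identity underlying \eqref{r2}: writing $T = U|T|$ and using $|T|^{1-t}|T|^t = |T|$ together with the self-adjointness of $|T|^{1-t}$, I would record that
$$\langle Tx, y\rangle = \langle U|T|x, y\rangle = \langle|T|^{1-t}|T|^t x, U^* y\rangle = \langle|T|^t x, |T|^{1-t}U^* y\rangle,$$
so that $|\langle Tx, y\rangle| = |\langle a, b\rangle|$ with $a := |T|^t x$ and $b := |T|^{1-t}U^* y$. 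Next I would verify the norm identity $\||T^*|^{1-t}y\| = \|b\|$, which follows from $|T^*|^{1-t} = U|T|^{1-t}U^*$ via $\langle|T^*|^{2(1-t)}y, y\rangle = \langle|T|^{2(1-t)}U^*y, U^*y\rangle = \|b\|^2$. Consequently Kato's inequality at parameter $t$ is precisely the Cauchy--Schwarz inequality $|\langle a, b\rangle| \leq \|a\|\,\|b\|$ for the pair $(a, b)$, and the asserted equality $|\langle Tx, y\rangle| = \||T|^t x\|\,\||T^*|^{1-t}y\|$ is exactly the equality case $|\langle a, b\rangle| = \|a\|\,\|b\|$.

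I would then apply Lemma \ref{lem1} to $a$ and $b$, which gives $\|a\|^2\|b\|^2 - |\langle a, b\rangle|^2 = \|b\|^2\inf_{\lambda\in\mathbb{C}}\|a - \lambda b\|^2$. Since the hypothesis $\|Tx\|\,\|T^*y\| \neq 0$ guarantees $\|b\| = \||T^*|^{1-t}y\| > 0$, the left-hand side vanishes if and only if $\inf_{\lambda\in\mathbb{C}}\|a - \lambda b\| = 0$. Unwinding the definitions of $a$ and $b$, this reads $\inf_{\lambda\in\mathbb{C}}\||T|^t x - \lambda|T|^{1-t}U^* y\| = 0$, which is the stated condition. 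Because the equivalence holds for each fixed $t \in [0,1]$, quantifying over $t$ on both sides yields the biconditional. I emphasize that I route the argument through Lemma \ref{lem1} rather than through Lemma \ref{th1} directly, since the bound in Lemma \ref{th1} is only an upper estimate and does not by itself deliver the ``if'' direction.

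The only delicate point will be checking strict positivity of $\|b\|$ (and of $\|a\| = \||T|^t x\|$) across the full range $t \in [0,1]$, including the endpoints. For this I would note that $T^*y \neq 0$ forces $|T^*|y \neq 0$, hence $y \notin \ker|T^*|$, so the spectral measure of $|T^*|$ at $y$ carries mass away from the origin and $\||T^*|^{1-t}y\| > 0$ for all $t \in [0,1]$ (the case $t = 1$ uses that $y$ has nonzero component on $\overline{\operatorname{ran}}|T^*|$). The symmetric argument handles $\||T|^t x\| > 0$. No genuine obstacle is anticipated beyond this bookkeeping, since once the problem is recast as a Cauchy--Schwarz equality the conclusion is immediate from Lemma \ref{lem1}.
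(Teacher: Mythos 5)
Your proof is correct, and it is a genuine (if modest) reorganization of the paper's argument. The paper splits the proposition into two directions: necessity is read off from the refinement in Lemma \ref{th1} (equality forces the subtracted nonnegative term, hence the infimum, to vanish), while sufficiency is argued separately from the equality case of Cauchy--Schwarz. You instead bypass Lemma \ref{th1} entirely: after the exact rewriting $\langle Tx,y\rangle=\langle |T|^t x,\ |T|^{1-t}U^*y\rangle$ and the norm identity $\||T^*|^{1-t}y\|=\||T|^{1-t}U^*y\|$, the single identity of Lemma \ref{lem1} applied to $a=|T|^tx$, $b=|T|^{1-t}U^*y$ gives both implications at once, since $\|a\|^2\|b\|^2-|\langle a,b\rangle|^2=\|b\|^2\inf_{\lambda}\|a-\lambda b\|^2$ and $\|b\|>0$. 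This is cleaner and makes the biconditional transparent; what it costs is nothing, since Lemma \ref{th1} is itself proved from Lemma \ref{lem1} by exactly this substitution, so the two arguments rest on the same identity. One point you rightly flag, and which the paper glosses over, is the endpoint $t=1$ (and symmetrically $t=0$), where $|T^*|^{1-t}$ and $|T|^{1-t}$ become zeroth powers and the identity $\||T^*|^{0}y\|=\||T|^{0}U^*y\|$ depends on the convention adopted for $A^0$ (identity versus range projection); under the range-projection convention your spectral-measure remark settles it, and in any case the hypothesis $\|Tx\|\,\|T^*y\|\neq 0$ keeps both $\|a\|$ and $\|b\|$ strictly positive throughout $[0,1]$, which is all the argument needs.
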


\begin{proof}
    The necessary part follows from Lemma \ref{th1} and so we only prove the sufficient part. Suppose $\inf_{\lambda \in \mathbb{C}} \left\||T|^tx-\lambda |T|^{1-t}U^*y   \right\|=0.$ Therefore, this implies $|\langle |T|^tx, |T|^{1-t}U^*y\rangle|=\||T|^tx\| \||T|^{1-t}U^*y\|,$ i.e., $ |\langle Tx,y\rangle|  = \||T|^tx\| \||T^*|^{1-t}y\|.$
\end{proof}

Again applying Lemma \ref{th1}, we develop an upper bound for the numerical radius of bounded linear operators which refines the existing bound \eqref{k1}.

\begin{theorem}\label{th2}
    Let $T\in \mathcal{B}(\mathcal{H})$, $T\neq 0$ and $T=U|T|$ be the polar decomposition. Then 
    \begin{eqnarray*} 
        w(T) &\leq& \frac12 \left\| |T|^{2t}+|T^*|^{2(1-t)} \right\|- \lim_{n\to \infty }\frac{\left\| |T^*|^{1-t}x_n\right\|}{2\| |T|^t x_n\|} \inf_{\lambda \in \mathbb C} \left\| \left(|T|^t-\lambda|T|^{1-t}U^* \right)x_n\right \|^2 \\
        &\leq& \frac12 \left\| |T|^{2t}+|T^*|^{2(1-t)} \right\|, \quad \textit{for all $t\in [0,1]$}\notag
    \end{eqnarray*} 
    where the sequence $\{ x_n\}\subset \mathcal{H}, \|x_n\|=1$ such that $w(T)=\lim_{n\to \infty } | \langle Tx_n,x_n\rangle  |.$
   For $t=\frac12$, 
     \begin{eqnarray}\label{n2}
        w(T) &\leq& \frac12 \left\| |T|^{}+|T^*|^{} \right\|-\lim_{n\to \infty }\frac{\left\||T^*|^{1/2}x_n\right\| }{2 \left\||T|^{1/2}x_n\right\|^{}} \inf_{\lambda \in \mathbb C} \left\||T|^{1/2} \left( I -\lambda U^* \right)x_n \right\|^2\\
        &\leq& \frac12 \left\| |T|^{}+|T^*|^{} \right\|. \notag
    \end{eqnarray}
    
\end{theorem}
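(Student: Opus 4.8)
The plan is to establish Theorem \ref{th2} by using the characterization $w(T)=\max\{\|Re(\lambda T)\|:\lambda\in\mathbb{C},\,|\lambda|=1\}$ together with the variational definition of the numerical radius as a supremum of $|\langle Tx,x\rangle|$ over unit vectors, and then applying the refined Kato inequality from Lemma \ref{th1} with $y=x$. First I would choose a maximizing sequence $\{x_n\}\subset\mathcal{H}$ with $\|x_n\|=1$ such that $w(T)=\lim_{n\to\infty}|\langle Tx_n,x_n\rangle|$; such a sequence exists by the definition of $w(T)$ as a supremum. For each $n$, applying Lemma \ref{th1} with $x=y=x_n$ gives
\begin{eqnarray*}
|\langle Tx_n,x_n\rangle| \leq \left(\left\||T|^t x_n\right\| - \frac{\inf_{\lambda\in\mathbb{C}}\left\||T|^t x_n-\lambda|T|^{1-t}U^*x_n\right\|^2}{2\left\||T|^t x_n\right\|}\right)\left\||T^*|^{1-t}x_n\right\|,
\end{eqnarray*}
whenever $\|Tx_n\|\|T^*x_n\|\neq 0$.

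The second step is to bound the leading product term $\||T|^t x_n\|\,\||T^*|^{1-t}x_n\|$. Here I would use the AM--GM inequality in the form $ab\leq\frac12(a^2+b^2)$ applied to $a=\||T|^t x_n\|$ and $b=\||T^*|^{1-t}x_n\|$, giving
\begin{eqnarray*}
\left\||T|^t x_n\right\|\left\||T^*|^{1-t}x_n\right\| \leq \frac12\left(\langle|T|^{2t}x_n,x_n\rangle+\langle|T^*|^{2(1-t)}x_n,x_n\rangle\right) = \frac12\langle(|T|^{2t}+|T^*|^{2(1-t)})x_n,x_n\rangle,
\end{eqnarray*}
which is bounded above by $\frac12\||T|^{2t}+|T^*|^{2(1-t)}\|$ since $x_n$ is a unit vector. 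Substituting this into the displayed bound and rearranging, I obtain for each $n$ (with the nonzero condition)
\begin{eqnarray*}
|\langle Tx_n,x_n\rangle| \leq \frac12\left\||T|^{2t}+|T^*|^{2(1-t)}\right\| - \frac{\left\||T^*|^{1-t}x_n\right\|}{2\left\||T|^t x_n\right\|}\inf_{\lambda\in\mathbb{C}}\left\|\left(|T|^t-\lambda|T|^{1-t}U^*\right)x_n\right\|^2.
\end{eqnarray*}
Taking the limit as $n\to\infty$ on both sides and using that the left side converges to $w(T)$ yields the claimed refinement, with the subtracted limit term being nonnegative, which immediately gives the second (coarser) inequality $w(T)\leq\frac12\||T|^{2t}+|T^*|^{2(1-t)}\|$.

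The main obstacle I anticipate is handling the degenerate cases along the maximizing sequence, namely terms for which $\|Tx_n\|\|T^*x_n\|=0$, where Lemma \ref{th1} only provides the weaker product bound without the subtracted infimum term. I would argue that one may either pass to a subsequence on which the nonzero condition holds (noting that if $\|Tx_n\|\to 0$ along a subsequence then $|\langle Tx_n,x_n\rangle|\to 0$, which is incompatible with $T\neq 0$ having $w(T)>0$, so such terms are negligible in the limit), or observe that in the degenerate case the infimum term simply does not contribute and the coarser bound still holds, so the limit inequality is preserved either way. A secondary technical point is ensuring that the limit $\lim_{n\to\infty}\frac{\|\,|T^*|^{1-t}x_n\|}{2\|\,|T|^t x_n\|}\inf_{\lambda}\|\cdots\|^2$ actually exists; if it does not exist a priori, I would replace the limit by $\liminf$ in the intermediate estimates, or pass to a further subsequence along which all relevant quantities converge, which is permissible since each subsequence of $\{|\langle Tx_n,x_n\rangle|\}$ still converges to $w(T)$. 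The case $t=\tfrac12$ then follows by direct specialization, using $|T|^{1/2}(I-\lambda U^*)x_n=(|T|^{1/2}-\lambda|T|^{1/2}U^*)x_n$ to rewrite the infimum term.
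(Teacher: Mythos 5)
Your proposal is correct and follows essentially the same route as the paper: the paper's own proof is the one-line instruction to set $x=y=x_n$ in Lemma \ref{th1} and pass to the limit, with the AM--GM step $\||T|^tx_n\|\,\||T^*|^{1-t}x_n\|\leq\frac12\langle(|T|^{2t}+|T^*|^{2(1-t)})x_n,x_n\rangle$ left implicit. Your additional care about the degenerate terms $\|Tx_n\|\|T^*x_n\|=0$ and the existence of the limit (replacing it by $\liminf$ or passing to a subsequence) addresses details the paper silently omits, and is sound.
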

\begin{proof}
    Put $x=x_n $ and $ y=x_n$ in Lemma \ref{th1} and taking $\lim_{n\to \infty}$, we get the desired bounds. 
    \end{proof}

       Clearly, Theorem \ref{th2} refines the existing bound $w(T) \leq \frac12 \left\| |T|^{2t}+|T^*|^{2(1-t)} \right\|, \quad \textit{for all } t\in [0,1], $
which is proved by El-Haddad and Kittaneh \cite[Theorem 1]{El-Haddad and F. Kittaneh}. We show in Example \ref{exampleR} that this refinement is proper.
Next, we obtain an inner product inequality for the product of operators.
 
\begin{lemma}\label{Lemma-product}
Let $A, X, B\in \mathcal B(\mathcal H)$ with $X$ is positive. Then	$$\left| \left\langle AXBx,y \right\rangle  \right|\le \frac{\left\| X \right\|}{2}\left(2\|Bx\|\|A^*y\|-\frac{\inf_{\lambda\in \mathbb{C}}\|Bx-\lambda A^*y\|^2 }{2\|Bx\|}\|A^*y\|\right), $$
\textit{for all $x,y\in \mathcal H$ with $Bx\neq 0$.}
\end{lemma}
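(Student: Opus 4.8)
The plan is to collapse the three–operator expression into an ordinary two–vector inner product and then chain two inequalities that are already available in the excerpt: Dragomir's Buzano-type estimate \eqref{buzano4} and the refined Cauchy--Schwarz inequality \eqref{r1}. First I would move $A$ across the inner product to write $\langle AXBx,y\rangle=\langle XBx,A^*y\rangle$, and then exploit positivity of $X$ by factoring $X=X^{1/2}X^{1/2}$ with $X^{1/2}$ self-adjoint. This gives $\langle AXBx,y\rangle=\langle X^{1/2}Bx,\,X^{1/2}A^*y\rangle$, which is now precisely of the form $\langle T u, Tv\rangle$ to which \eqref{buzano4} applies, with $T=X^{1/2}$, $u=Bx$, and $v=A^*y$.

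Next I would apply \eqref{buzano4} with this choice. Since $\|T\|^2=\|X^{1/2}\|^2=\|X\|$, this yields
$|\langle AXBx,y\rangle|\le \tfrac{\|X\|}{2}\left(|\langle Bx,A^*y\rangle|+\|Bx\|\,\|A^*y\|\right)$.
The whole problem is thereby reduced to estimating the single cross term $|\langle Bx,A^*y\rangle|$ more sharply than the trivial Cauchy--Schwarz bound $\|Bx\|\,\|A^*y\|$.

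For that sharper estimate I would invoke the refinement \eqref{r1} with $x\mapsto Bx$ and $y\mapsto A^*y$; this is exactly the point where the standing hypothesis $Bx\neq 0$ is required, as \eqref{r1} needs a nonzero first vector. It produces $|\langle Bx,A^*y\rangle|\le\left(\|Bx\|-\frac{\inf_{\lambda\in\mathbb{C}}\|Bx-\lambda A^*y\|^2}{2\|Bx\|}\right)\|A^*y\|$. Substituting this into the displayed bound and combining the two occurrences of $\|Bx\|\,\|A^*y\|$ into the term $2\|Bx\|\,\|A^*y\|$ gives exactly the claimed inequality, with the asserted correction term $\frac{\inf_{\lambda\in\mathbb{C}}\|Bx-\lambda A^*y\|^2}{2\|Bx\|}\|A^*y\|$.

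I do not expect a genuine obstacle here: once the expression is recast as $\langle X^{1/2}Bx,X^{1/2}A^*y\rangle$, the result follows by concatenating two previously established inequalities. The only points needing care are the identity $\langle X^{1/2}Bx,X^{1/2}A^*y\rangle=\langle XBx,A^*y\rangle$, which relies on self-adjointness of the positive square root $X^{1/2}$, and the verification that the nonvanishing condition $Bx\neq 0$ matches the hypothesis under which \eqref{r1} is valid.
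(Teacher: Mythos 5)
Your proof is correct and follows essentially the same route as the paper: the authors also reduce to the two-vector inequality $|\langle Xu,v\rangle|\le \frac{\|X\|}{2}(|\langle u,v\rangle|+\|u\|\|v\|)$ (obtained from \eqref{buzano4} by writing $X=|T|^2$, i.e.\ taking $T=X^{1/2}$ exactly as you do), substitute $u=Bx$, $v=A^*y$, and then sharpen the cross term $|\langle Bx,A^*y\rangle|$ via \eqref{r1} under the hypothesis $Bx\neq 0$. The final combination of terms is identical, so there is nothing to add.
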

\begin{proof}
\textit{For any $T\in \mathcal B(\mathcal H)$},
we can write \eqref{buzano4} as 
	\[\begin{aligned}
   \left| \left\langle {{\left| T \right|}^{2}}x,y \right\rangle  \right| 
   &\le\frac{{{\left\| \;\left| T \right| \;\right\|}^{2}}}{2}\left( \left| \left\langle x,y \right\rangle  \right|+\left\| x \right\|\left\| y \right\| \right)  
  =\frac{\left\| {{\left| T \right|}^{2}} \right\|}{2}\left( \left| \left\langle x,y \right\rangle  \right|+\left\| x \right\|\left\| y \right\| \right). 
\end{aligned}\]
Substitute ${{\left| T \right|}^{2}}$ by $X$, we get 
\begin{equation}\label{5}
\left| \left\langle Xx,y \right\rangle  \right|\le \frac{\left\| X \right\|}{2}\left( \left| \left\langle x,y \right\rangle  \right|+\left\| x \right\|\left\| y \right\| \right)
\end{equation}
(see also \cite[Remark 3.1]{sababheh2022}). Now we replace $x$ and $y$ by $Bx$ and ${{A}^{*}}y$ respectively, we get
	\[\left| \left\langle AXBx,y \right\rangle  \right|\le \frac{\left\| X \right\|}{2}\left( \left| \left\langle Bx,A^*y \right\rangle  \right|+\left\| Bx \right\|\left\| {{A}^{*}}y \right\| \right).\]
 Therefore, using \eqref{r1}, we get
 \begin{align*}
     \left| \left\langle AXBx,y \right\rangle  \right|&\le \frac{\left\| X \right\|}{2}\left[\left(\|Bx\|-\frac{\inf_{\lambda\in \mathbb{C}}\|Bx-\lambda A^*y\|^2 }{2\|Bx\|}\right)\|A^*y\|+\left\| Bx \right\|\left\| {{A}^{*}}y \right\| \right]\\
     &=\frac{\left\| X \right\|}{2}\left(2\|Bx\|\|A^*y\|-\frac{\inf_{\lambda\in \mathbb{C}}\|Bx-\lambda A^*y\|^2 }{2\|Bx\|}\|A^*y\|\right).
 \end{align*}
\end{proof}

If we consider $A=U{{\left| T \right|}^{\frac{1-t}{2}}}$ (where $T=U\left| T \right|$ is the polar decomposition), $B={{\left| T \right|}^{\frac{t}{2}}}$ and $X={{\left| T \right|}^{\frac{1}{2}}}$ in Lemma \ref{Lemma-product}, we obtain

\begin{proposition}\label{7}
Let $T \in \mathcal B(\mathcal H)$ and  $T=U\left| T \right|$ be the polar decomposition of $T$. Then
\begin{equation*}\label{20}
\left| \left\langle Tx,y \right\rangle  \right|\le \frac{{{\left\| T \right\|}^{\frac{1}{2}}}}{2}\left(2\sqrt{\left\langle {{\left| T \right|}^{t}}x,x \right\rangle \left\langle {{\left| T^* \right|}^{1-t}}y,y \right\rangle }-\frac{\inf_{\lambda\in \mathbb{C}}\||T|^{\frac{t}{2}}x-\lambda |T|^{\frac{1-t}{2}}U^*y\|^2 }{2\||T|^{\frac{t}{2}}x\|}\||T^*|^{\frac{1-t}{2}}y\| \right),
\end{equation*}
for all $t\in [0,1]$ and for any $x,y\in \mathcal H$ with $Tx\neq 0$.
\end{proposition}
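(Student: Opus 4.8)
The plan is to obtain the claimed inequality as a direct specialization of Lemma \ref{Lemma-product}. Following the hint, I would set $A = U|T|^{(1-t)/2}$, $B = |T|^{t/2}$ and $X = |T|^{1/2}$, where $T = U|T|$ is the polar decomposition. Since $X = |T|^{1/2}$ is positive, the hypotheses of Lemma \ref{Lemma-product} are met once $Bx \neq 0$, a condition I will relate to the stated hypothesis $Tx \neq 0$ at the end.

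First I would verify that the operator $AXB$ collapses exactly to $T$. Multiplying and adding exponents gives $AXB = U|T|^{(1-t)/2}|T|^{1/2}|T|^{t/2} = U|T|^{(1-t)/2 + 1/2 + t/2} = U|T| = T$, so the left side $|\langle AXBx, y\rangle|$ of Lemma \ref{Lemma-product} becomes $|\langle Tx, y\rangle|$. Next I would evaluate each factor on the right side. The scalar prefactor is $\|X\|/2 = \||T|^{1/2}\|/2 = \|T\|^{1/2}/2$, using that the norm of a positive operator equals the supremum of its spectrum, so $\||T|^{1/2}\| = \||T|\|^{1/2} = \|T\|^{1/2}$. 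For the norm of $Bx$ I would use $\|Bx\|^2 = \||T|^{t/2}x\|^2 = \langle |T|^t x, x\rangle$. For $A^*y$ I would compute $A^* = |T|^{(1-t)/2}U^*$, whence $\|A^*y\|^2 = \langle |T|^{1-t}U^*y, U^*y\rangle = \langle U|T|^{1-t}U^* y, y\rangle$. The crucial point here is the conjugation identity $U|T|^{1-t}U^* = |T^*|^{1-t}$, coming from $|T^*| = U|T|U^*$ and the continuous functional calculus; this yields $\|A^*y\|^2 = \langle |T^*|^{1-t}y, y\rangle$, so that $2\|Bx\|\|A^*y\| = 2\sqrt{\langle |T|^t x, x\rangle \langle |T^*|^{1-t}y, y\rangle}$, matching the leading term, and $\|A^*y\| = \||T^*|^{(1-t)/2}y\|$.

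Finally I would observe that the infimum term transports verbatim: since $Bx - \lambda A^*y = |T|^{t/2}x - \lambda |T|^{(1-t)/2}U^*y$ and $2\|Bx\| = 2\||T|^{t/2}x\|$, the quantity $\frac{\inf_{\lambda}\|Bx-\lambda A^*y\|^2}{2\|Bx\|}\|A^*y\|$ becomes exactly $\frac{\inf_{\lambda}\||T|^{t/2}x-\lambda|T|^{(1-t)/2}U^*y\|^2}{2\||T|^{t/2}x\|}\||T^*|^{(1-t)/2}y\|$. Assembling these substitutions reproduces the asserted bound.

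The main obstacle is not any single hard inequality—the argument is essentially a bookkeeping substitution—but rather justifying the functional-calculus facts cleanly: the norm identity $\||T|^{1/2}\| = \|T\|^{1/2}$ and, more importantly, the conjugation identity $U|T|^{1-t}U^* = |T^*|^{1-t}$ arising from $|T^*| = U|T|U^*$ (valid since $s \mapsto s^{1-t}$ vanishes at $0$). I would also check compatibility of the hypotheses: for $t \in (0,1]$ one has $\ker |T|^{t/2} = \ker |T|$, so $Tx \neq 0$ is exactly $Bx = |T|^{t/2}x \neq 0$, while at the endpoints the zeroth power is read through the functional calculus as the relevant range projection; thus Lemma \ref{Lemma-product} applies under precisely the stated condition $Tx \neq 0$.
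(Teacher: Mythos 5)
Your proposal is correct and coincides with the paper's own proof, which obtains the proposition precisely by substituting $A=U|T|^{(1-t)/2}$, $B=|T|^{t/2}$, $X=|T|^{1/2}$ into Lemma \ref{Lemma-product}; your write-up merely makes explicit the bookkeeping (the identity $AXB=T$, $\||T|^{1/2}\|=\|T\|^{1/2}$, the conjugation $U|T|^{1-t}U^*=|T^*|^{1-t}$, and the equivalence of $Tx\neq 0$ with $|T|^{t/2}x\neq 0$) that the paper leaves implicit.
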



Again, by setting $A=U{{\left| T \right|}^{\frac{t}{2}}}$, $B={{\left| T \right|}^{\frac{t}{2}}}$  and $X={{\left| T \right|}^{1-t}}$ in Lemma \ref{Lemma-product}, we get

\begin{proposition}\label{Thm_3prod}
Let $T \in \mathcal B(\mathcal H)$ and $T=U\left| T \right|$ be the polar decomposition.  Then
\begin{equation*} 
\left| \left\langle Tx,y \right\rangle  \right|\le \frac{{{\left\| T \right\|}^{1-t}}}{2}\left(2\sqrt{\left\langle {{\left| T \right|}^{t}}x,x \right\rangle \left\langle {{\left| T^* \right|}^{t}}y,y \right\rangle }-\frac{\inf_{\lambda\in \mathbb{C}}\||T|^{\frac{t}{2}}x-\lambda |T|^{\frac{t}{2}}U^*y\|^2 }{2\||T|^{\frac{t}{2}}x\|}\||T^*|^{\frac{t}{2}}y\| \right),
\end{equation*}
for all  $t\in [0,1]$ and for any $x,y\in \mathcal H$ with $Tx\neq 0$.
\end{proposition}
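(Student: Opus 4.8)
The plan is to obtain Proposition \ref{Thm_3prod} as a direct specialization of Lemma \ref{Lemma-product}, precisely via the substitution $A = U|T|^{t/2}$, $B = |T|^{t/2}$, $X = |T|^{1-t}$. First I would check that these operators meet the hypotheses of the lemma. Since $|T|$ is positive and $1-t \ge 0$, the operator $X = |T|^{1-t}$ is positive, so the requirement $X \ge 0$ holds. The lemma's constraint $Bx \neq 0$ reads $|T|^{t/2}x \neq 0$, which for $t \in (0,1]$ is equivalent to the stated hypothesis $Tx \neq 0$: a positive operator and its positive powers share the same kernel, and $U$ is isometric on $\overline{\mathrm{range}}\,|T|$, so $\|Tx\| = \||T|x\|$.

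The next step is to verify that the product $AXB$ collapses to $T$. Combining the three powers of $|T|$, which commute as functions of a single positive operator, I compute $AXB = U|T|^{t/2}\,|T|^{1-t}\,|T|^{t/2} = U|T|^{(t/2)+(1-t)+(t/2)} = U|T| = T$, so that $\langle AXBx, y\rangle = \langle Tx, y\rangle$. I also record $\|X\| = \||T|^{1-t}\| = \|T\|^{1-t}$, which supplies the scalar factor $\|T\|^{1-t}/2$ out front of the claimed bound.

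The one substantive identity is the evaluation of the two norms on the right-hand side of the lemma in the desired form. For $Bx = |T|^{t/2}x$ this is immediate: $\|Bx\|^2 = \langle |T|^{t}x, x\rangle$, so $\|Bx\| = \||T|^{t/2}x\|$. For $A^*y$ I would use $A^* = |T|^{t/2}U^*$, giving $\|A^*y\|^2 = \langle AA^* y, y\rangle = \langle U|T|^{t}U^* y, y\rangle$, and then invoke the polar-decomposition identity $|T^*|^{s} = U|T|^{s}U^*$ (valid for every $s \ge 0$, the same identity already used implicitly in Proposition \ref{7}) to rewrite this as $\langle |T^*|^{t}y, y\rangle = \||T^*|^{t/2}y\|^2$. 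The same substitution $A^*y = |T|^{t/2}U^*y$ converts the infimum $\inf_{\lambda\in\mathbb{C}}\|Bx - \lambda A^*y\|^2$ into $\inf_{\lambda\in\mathbb{C}}\||T|^{t/2}x - \lambda|T|^{t/2}U^*y\|^2$.

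The only point requiring care, and thus the main (if modest) obstacle, is confirming the identity $U|T|^{t}U^* = |T^*|^{t}$. This follows from $TT^* = U|T|^2 U^*$ together with the fact that $U^*U$ acts as the identity on $\overline{\mathrm{range}}\,|T|$: one gets $(U|T|U^*)^2 = U|T|(U^*U)|T|U^* = U|T|^2U^* = TT^*$, whence $|T^*| = (TT^*)^{1/2} = U|T|U^*$, and the functional-calculus version for the exponent $t$ follows for continuous functions vanishing at $0$. Feeding all of these expressions into Lemma \ref{Lemma-product} reproduces the claimed inequality verbatim, completing the proof.
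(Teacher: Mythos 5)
Your proposal is correct and takes essentially the same route as the paper, which derives Proposition \ref{Thm_3prod} precisely by substituting $A=U|T|^{\frac{t}{2}}$, $B=|T|^{\frac{t}{2}}$ and $X=|T|^{1-t}$ into Lemma \ref{Lemma-product}. Your additional verifications --- that $AXB=T$, that $\|X\|=\|T\|^{1-t}$, and that $U|T|^{t}U^{*}=|T^{*}|^{t}$ so that $\|A^{*}y\|=\||T^{*}|^{\frac{t}{2}}y\|$ --- simply make explicit the computations the paper leaves to the reader.
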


Consider $x=x_n$ and $ y=x_n$ in Proposition \ref{Thm_3prod} and taking  $\lim_{n\to \infty}$, we get the following stronger upper bound for the numerical radius than that in \eqref{01}. 

\begin{theorem}\label{Thm_refinement}
    Let $T \in \mathcal B(\mathcal H)$, $T\neq 0$ and $T=U\left| T \right|$ be the polar decomposition. Then 
\begin{eqnarray*}\label{Ineq_3prod_0}
w(T)&\leq &\|T\|-{\left\| T \right\|}^{1-t} \lim_{n\to \infty }\frac{\left\||T^*|^{\frac{t}{2}}x_n\right\|}{4\||T|^\frac{t}{2}x_n\|}\inf_{\lambda\in \mathbb{C}}\left\|(|T|^{\frac{t}{2}}-\lambda |T|^{\frac{t}{2}}U^*)x_n\right\|^2, \quad \textit{for all $t\in [0,1]$ }\\
& \leq & \|T\|,
\end{eqnarray*}
where the sequence $\{ x_n\}\subset \mathcal{H}, \|x_n\|=1$ such that $w(T)=\lim_{n\to \infty } | \langle Tx_n,x_n\rangle  |.$
\end{theorem}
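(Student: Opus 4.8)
The plan is to specialize Proposition~\ref{Thm_3prod} to the diagonal $x=y=x_n$, to dominate the resulting ``main term'' by $\|T\|^t$, and then to let $n\to\infty$. Since $T\neq 0$, the numerical radius $w(T)>0$, so I can fix a sequence of unit vectors $\{x_n\}$ with $|\langle Tx_n,x_n\rangle|\to w(T)$; discarding finitely many terms if necessary, I may assume $\langle Tx_n,x_n\rangle\neq 0$, whence $Tx_n\neq 0$ and (for $t>0$) $\||T|^{t/2}x_n\|\neq 0$, so that Proposition~\ref{Thm_3prod} applies to each such $x_n$.

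Putting $x=y=x_n$ in Proposition~\ref{Thm_3prod} gives, for each such $n$,
\[
|\langle Tx_n,x_n\rangle|\le\frac{\|T\|^{1-t}}{2}\left(2\sqrt{\langle|T|^tx_n,x_n\rangle\,\langle|T^*|^tx_n,x_n\rangle}-\frac{\||T^*|^{t/2}x_n\|}{2\||T|^{t/2}x_n\|}\inf_{\lambda\in\mathbb{C}}\|(|T|^{t/2}-\lambda|T|^{t/2}U^*)x_n\|^2\right).
\]
The one substantive step is to control the main term. Using $\||T|^t\|=\|T\|^t$, $\||T^*|^t\|=\|T\|^t$ and $\|x_n\|=1$, I get $\langle|T|^tx_n,x_n\rangle\le\|T\|^t$ and $\langle|T^*|^tx_n,x_n\rangle\le\|T\|^t$, so their geometric mean is at most $\|T\|^t$. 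Replacing $2\sqrt{\cdots}$ by $2\|T\|^t$ and using $\|T\|^{1-t}\|T\|^t=\|T\|$ then yields
\[
|\langle Tx_n,x_n\rangle|\le\|T\|-\|T\|^{1-t}\,\frac{\||T^*|^{t/2}x_n\|}{4\||T|^{t/2}x_n\|}\inf_{\lambda\in\mathbb{C}}\|(|T|^{t/2}-\lambda|T|^{t/2}U^*)x_n\|^2.
\]

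Finally I would pass to the limit $n\to\infty$. The left-hand side tends to $w(T)$ by the choice of $\{x_n\}$, which gives the first claimed inequality; the second inequality $w(T)\le\|T\|$ is then immediate because the subtracted quantity is nonnegative. The hard part is not the algebra but the limit itself: the correction term need not converge along the chosen sequence, so I would first pass to a subsequence along which it converges (it is nonnegative and, after rearranging the per-$n$ inequality, bounded by $\|T\|^t$), while still recording $w(T)=\lim|\langle Tx_n,x_n\rangle|$ along that subsequence. I would also treat the degenerate case $t=0$ separately, where $|T|^{t/2}$ collapses to the projection onto the closure of the range of $|T|$ and the correction term must be read on that support.
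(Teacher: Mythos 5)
Your proposal is correct and follows exactly the paper's route: the paper's proof of Theorem~\ref{Thm_refinement} is precisely to set $x=y=x_n$ in Proposition~\ref{Thm_3prod} and pass to the limit, with the bound $\sqrt{\langle|T|^tx_n,x_n\rangle\langle|T^*|^tx_n,x_n\rangle}\le\|T\|^t$ left implicit. Your additional care about extracting a subsequence along which the (nonnegative, bounded) correction term converges, and about the degenerate case $t=0$, only makes explicit details the paper glosses over.
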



Again, from Proposition \ref{7} we obtain 

\begin{theorem}\label{10}
Let $ T \in \mathcal B(\mathcal H)$, $T\neq 0$ and  $T=U\left| T \right|$ be the polar decomposition. Then 
\begin{eqnarray*} 
w \left( T \right) &\le&  \frac{{{\left\| T \right\|}^{\frac{1}{2}}}}{2}\left( \left\| {{\left| T \right|}^{t}}+{{\left| {{T}^{*}} \right|}^{1-t}} \right\|  - \lim_{n\to \infty }\frac{\left\| |T^*|^{\frac{1-t}{2}}x_n\right\|}{2\||T|^\frac{t}{2}x_n\|}\inf_{\lambda\in \mathbb{C}}\left\|(|T|^{\frac{t}{2}}-\lambda |T|^{\frac{1-t}{2}}U^*)x_n\right\|^2\right)\\
&\le& \frac{{{\left\| T \right\|}^{\frac{1}{2}}}}{2} \left\| {{\left| T \right|}^{t}}+{{\left| {{T}^{*}} \right|}^{1-t}} \right\|, \quad \textit{for all $t\in [0,1]$, }
\end{eqnarray*}
where the sequence $\{ x_n\}\subset \mathcal{H},  \|x_n\|=1$ such that $w(T)=\lim_{n\to \infty } | \langle Tx_n,x_n\rangle  |.$
\end{theorem}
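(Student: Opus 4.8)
The plan is to derive Theorem \ref{10} as a direct specialization of Proposition \ref{7}, taking $x=y=x_n$ along a numerical-radius-attaining sequence and then passing to the limit. Since $w(T)=\sup_{\|x\|=1}|\langle Tx,x\rangle|$, there is a sequence $\{x_n\}\subset\mathcal H$ with $\|x_n\|=1$ and $w(T)=\lim_{n\to\infty}|\langle Tx_n,x_n\rangle|$. Because $T\neq 0$ forces $w(T)>0$, we have $|\langle Tx_n,x_n\rangle|>0$ for all large $n$; hence $Tx_n\neq 0$ eventually, so Proposition \ref{7} is applicable along the tail of the sequence, and moreover $\||T|^{t/2}x_n\|\neq 0$ so that the denominators occurring below are nonzero.

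Setting $x=y=x_n$ in Proposition \ref{7} gives
\[
|\langle Tx_n,x_n\rangle|\le\frac{\|T\|^{1/2}}{2}\left(2\sqrt{\langle|T|^tx_n,x_n\rangle\,\langle|T^*|^{1-t}x_n,x_n\rangle}-\frac{\inf_{\lambda\in\mathbb C}\|(|T|^{t/2}-\lambda|T|^{(1-t)/2}U^*)x_n\|^2}{2\||T|^{t/2}x_n\|}\||T^*|^{(1-t)/2}x_n\|\right).
\]
The one genuinely non-mechanical step is to convert the geometric-mean term into the operator-norm quantity appearing in the statement: by the arithmetic–geometric mean inequality $2\sqrt{ab}\le a+b$ applied to $a=\langle|T|^tx_n,x_n\rangle$ and $b=\langle|T^*|^{1-t}x_n,x_n\rangle$, one gets $2\sqrt{ab}\le\langle(|T|^t+|T^*|^{1-t})x_n,x_n\rangle\le\||T|^t+|T^*|^{1-t}\|$, the last inequality using that $|T|^t+|T^*|^{1-t}$ is positive and $\|x_n\|=1$. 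Substituting this bound only enlarges the first term while leaving the (nonnegative) subtracted correction term untouched, so the inequality direction is preserved and we obtain the desired per-$n$ estimate in terms of $\||T|^t+|T^*|^{1-t}\|$.

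Finally I would let $n\to\infty$. To make the limit in the statement meaningful, pass to a subsequence along which the bounded nonnegative correction term $\frac{\||T^*|^{(1-t)/2}x_n\|}{2\||T|^{t/2}x_n\|}\inf_{\lambda}\|(|T|^{t/2}-\lambda|T|^{(1-t)/2}U^*)x_n\|^2$ converges; this subsequence still realizes $w(T)=\lim_n|\langle Tx_n,x_n\rangle|$. Taking limits yields the first displayed inequality of Theorem \ref{10}, and the second inequality is immediate since the subtracted limit is nonnegative. The main obstacle is thus not any new operator inequality — all the analytic content is already packaged in Proposition \ref{7} — but rather the limiting bookkeeping: guaranteeing the limit exists (via a subsequence) and confirming, from $w(T)>0$, that $\||T|^{t/2}x_n\|$ stays bounded away from $0$ so the correction term is well defined throughout.
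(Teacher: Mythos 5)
Your proposal is correct and follows essentially the same route as the paper: apply Proposition \ref{7} with $x=y=x_n$, use the AM--GM step $2\sqrt{ab}\le a+b$ together with $\langle(|T|^t+|T^*|^{1-t})x_n,x_n\rangle\le\||T|^t+|T^*|^{1-t}\|$, and pass to the limit. Your extra care about nonvanishing denominators and passing to a subsequence so the correction term converges is a sensible tightening of bookkeeping the paper leaves implicit, but it is not a different argument.
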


\begin{proof}
From Proposition \ref{7}, we get
\[\begin{aligned}
   \left| \left\langle Tx,x \right\rangle  \right|& \le \frac{{{\left\| T \right\|}^{\frac{1}{2}}}}{2}\left(\left\langle {{\left| T \right|}^{t}}x,x \right\rangle +\left\langle {{\left| T^* \right|}^{1-t}}x,x \right\rangle -\frac{\inf_{\lambda\in \mathbb{C}}\||T|^{\frac{t}{2}}x-\lambda |T|^{\frac{1-t}{2}}U^*x\|^2 }{2\||T|^{\frac{t}{2}}x\|}\||T^*|^{\frac{1-t}{2}}x\| \right) \\ 
 & \le \frac{{{\left\| T \right\|}^{\frac{1}{2}}}}{2}\left( \left\| {{\left| T \right|}^{t}}+{{\left| {{T}^{*}} \right|}^{1-t}} \right\|  -\frac{\inf_{\lambda\in \mathbb{C}}\||T|^{\frac{t}{2}}x-\lambda |T|^{\frac{1-t}{2}}U^*x\|^2 }{2\||T|^{\frac{t}{2}}x\|}\||T^*|^{\frac{1-t}{2}}x\|\right), \quad  
\end{aligned}\]
where $Tx\neq 0.$ This gives the desired bounds.
\end{proof}

Similarly, form Proposition \ref{Thm_3prod}, we obtain
\begin{theorem}\label{111}
    Let $T \in \mathcal B(\mathcal H)$, $T\neq 0$ and $T=U\left| T \right|$ be the polar decomposition. Then 
\begin{eqnarray*}
w\left( T \right) &\le&  \frac{{{\left\| T \right\|}^{1-t}}}{2}\left( \left\| {{\left| T \right|}^{t}}+{{\left| {{T}^{*}} \right|}^{t}} \right\|  - \lim_{n\to \infty }\frac{\left\| |T^*|^{\frac{t}{2}}x_n\right\|}{2\left\||T|^\frac{t}{2}x_n\right\|}\inf_{\lambda\in \mathbb{C}}\left\|(|T|^{\frac{t}{2}}-\lambda |T|^{\frac{t}{2}}U^*)x_n\right\|^2\right)\\
&\le& \frac{{{\left\| T \right\|}^{1-t}}}{2}  \left\| {{\left| T \right|}^{t}}+{{\left| {{T}^{*}} \right|}^{t}} \right\|, \quad \mbox{for all}~  t\in [0,1],
\end{eqnarray*}
 where the sequence $\{ x_n\}\subset \mathcal{H},  \|x_n\|=1$ such that $w(T)=\lim_{n\to \infty } | \langle Tx_n,x_n\rangle  |.$
\end{theorem}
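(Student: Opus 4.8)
The plan is to mirror the proof of Theorem \ref{10}, substituting the input Proposition \ref{7} by Proposition \ref{Thm_3prod}. First I would specialize Proposition \ref{Thm_3prod} to the diagonal by setting $y=x$, which gives, for any unit vector $x$ with $Tx\neq 0$,
\[
\left|\left\langle Tx,x\right\rangle\right|\le \frac{\|T\|^{1-t}}{2}\left(2\sqrt{\left\langle |T|^t x,x\right\rangle\left\langle |T^*|^t x,x\right\rangle}-\frac{\inf_{\lambda\in\mathbb{C}}\|(|T|^{t/2}-\lambda|T|^{t/2}U^*)x\|^2}{2\||T|^{t/2}x\|}\||T^*|^{t/2}x\|\right).
\]

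Next I would control the geometric-mean term by the arithmetic--geometric mean inequality $2\sqrt{ab}\le a+b$, applied with $a=\langle |T|^t x,x\rangle$ and $b=\langle |T^*|^t x,x\rangle$, so that $2\sqrt{ab}\le \langle(|T|^t+|T^*|^t)x,x\rangle\le \||T|^t+|T^*|^t\|$, the last inequality holding because $x$ is a unit vector and $|T|^t+|T^*|^t$ is positive. Feeding this back in yields
\[
\left|\left\langle Tx,x\right\rangle\right|\le \frac{\|T\|^{1-t}}{2}\left(\||T|^t+|T^*|^t\|-\frac{\inf_{\lambda\in\mathbb{C}}\|(|T|^{t/2}-\lambda|T|^{t/2}U^*)x\|^2}{2\||T|^{t/2}x\|}\||T^*|^{t/2}x\|\right),
\]
which is exactly the scalar inequality already written out (for the analogous exponent $1-t$) inside the proof of Theorem \ref{10}.

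Finally I would pass to a maximizing sequence: by definition of $w(T)$ there exist unit vectors $x_n$ with $w(T)=\lim_{n\to\infty}|\langle Tx_n,x_n\rangle|$; applying the last display along $x_n$ and letting $n\to\infty$ produces the first asserted inequality, and discarding the nonpositive subtracted correction term gives the second inequality, recovering Bhunia's bound \cite[Corollary 2.13]{Bhunia_LAA_2024}. The only point requiring care, and the step I would be most attentive to, is the treatment of the correction term in the limit: one must ensure the maximizing sequence can be chosen with $Tx_n\neq 0$ (legitimate since $T\neq 0$, as otherwise $\langle Tx_n,x_n\rangle=0$ and such terms may be discarded without affecting $w(T)$), so that the quotient $\||T^*|^{t/2}x_n\|/\||T|^{t/2}x_n\|$ and the $\lambda$-infimum are well defined along the sequence. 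Beyond this bookkeeping no new idea is needed; the argument is a direct transcription of the proof of Theorem \ref{10} with the second factor's exponent changed from $1-t$ to $t$.
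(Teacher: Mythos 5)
Your proposal is correct and follows essentially the same route as the paper: the authors derive Theorem \ref{111} by putting $x=y=x_n$ in Proposition \ref{Thm_3prod}, bounding $2\sqrt{\langle |T|^t x,x\rangle\langle |T^*|^t x,x\rangle}$ by $\langle(|T|^t+|T^*|^t)x,x\rangle\le\||T|^t+|T^*|^t\|$, and passing to the limit, exactly as written out for Theorem \ref{10}. Your additional remark that $Tx_n\neq 0$ holds eventually (since $T\neq 0$ forces $w(T)\ge\|T\|/2>0$) is a sensible piece of bookkeeping that the paper leaves implicit.
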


\begin{remark}
    Recently, Bhunia \cite[Corollary 2.13]{Bhunia_LAA_2024} proved that
    \begin{eqnarray}\label{EQ1}
        w(T) &\leq& \frac{\|T\|^t}{2}  \left\| |T|^{2\alpha(1-t)}+ |T^*|^{2(1-\alpha)(1-t)}  \right\|, \quad 0\leq \alpha,t\leq 1.
    \end{eqnarray}
    In particular, for $\alpha=\frac12$,
    \begin{eqnarray}\label{EQ2}
        w(T) &\leq& \frac{\|T\|^t}{2}\left\| |T|^{1-t}+ |T^*|^{1-t}  \right\|, \quad 0\leq t\leq 1
    \end{eqnarray}
    and also for $t=\frac12 ,$
    \begin{eqnarray}\label{EQ3}
        w(T) &\leq& \frac{\|T\|^{1/2}}{2}  \left\| |T|^{\alpha}+ |T^*|^{1-\alpha}  \right\|, \quad 0\leq \alpha \leq 1.
    \end{eqnarray}
    Again if we take $\alpha=\frac12$ in \eqref{EQ3}, then
     $w(T) \leq \frac12 \|T\|^{1/2} \left\| |T|^{1/2}+ |T^*|^{1/2}  \right\|,$
    which is also proved by Kittaneh et al. \cite{Kittaneh_LAMA_2023}. Here we would like to remark that Theorem \ref{10} refines the bound \eqref{EQ3} and  Theorem \ref{111} refines the bound \eqref{EQ2}. To show proper refinement we consider the following example.
\end{remark}
\begin{example}\label{exampleR}
   Take $T=\begin{bmatrix}
        0 & 1 & 0\\
       0 &0 &2\\
        0 &0&0
    \end{bmatrix}.$ Then  $U=\begin{bmatrix}
        0 & 1 & 0\\
       0 &0 &1\\
        0 &0&0
    \end{bmatrix}$ such that $T=U|T|$ is the polar decomposition. Here,  $\|T\|=2$, $w(T)=\frac{\sqrt{5}}{2}=|\langle Tx_0, x_0 \rangle|,$ where $x_0=\left(\frac{1}{\sqrt{10}}, \frac{1}{\sqrt{2}},\frac{\sqrt{2}}{\sqrt{5}}\right)\in \mathbb {C}^3$. Also, simple calculation shows that
        $\inf_{\lambda\in \mathbb{C}} \left\|(|T|^{\frac{t}{2}}-\lambda |T|^{\frac{t}{2}}U^*)x_0\right\|^2= $  $ \inf_{\lambda \in \mathbb{C}} \left\{  \frac{1}{2}\left|1-\frac{\lambda}{\sqrt{5}}\right|^2+2^t\left|\frac{\sqrt{2}}{\sqrt{5}}-\frac{\lambda}{\sqrt{2}}\right|^2\right\}> 0$, $\left\||T^*|^{\frac{t}{2}}x_0\right\|=\sqrt{\frac{1}{10}+\frac{2^t}{2}}>0$ and $\||T|^\frac{t}{2}x_0\|=\sqrt{\frac{1}{2}+\frac{2^{1+t}}{5}}>0$.
         In particular, \text{for $t=0.01$}, we see that  $${\left\| T \right\|}^{1-t} \frac{\left\||T^*|^{\frac{t}{2}}x_0\right\|}{4\||T|^\frac{t}{2}x_0\|}\inf_{\lambda\in \mathbb{C}}\left\|(|T|^{\frac{t}{2}}-\lambda |T|^{\frac{t}{2}}U^*)x_0\right\|^2\approx 0.06089 $$ and so
          $ \|T\|-{\left\| T \right\|}^{1-t} \frac{\left\||T^*|^{\frac{t}{2}}x_0\right\|}{4\||T|^\frac{t}{2}x_0\|}\inf_{\lambda\in \mathbb{C}}\left\|(|T|^{\frac{t}{2}}-\lambda |T|^{\frac{t}{2}}U^*)x_0\right\|^2\approx 1.9391 < 2=\|T\|$
         and so the refinement in Theorem \ref{Thm_refinement} is a proper.
         Similarly, using the same example we can show that the refinements in Theorem \ref{th2}, Theorem \ref{10} and Theorem \ref{111} are also proper.
\end{example}

 Next, by using Lemma \ref{Lemma-product}, we obtain the following numerical radius bound for the product of operators. 

\begin{theorem}\label{1}
Let $A, B, X \in \mathcal B(\mathcal H)$ with $B\neq 0$ and $X$ is positive. Then
\begin{eqnarray*}\label{buzano2}
w \left( AXB \right) &\le&  \frac{\left\| X \right\|}{2}\left(\left\| {{\left| {{A}^{*}} \right|}^{2}}+{{\left| B \right|}^{2}} \right\|- \lim_{n\to \infty }\frac{\left\| A^*x_n\right\|}{2\|Bx_n\|}\inf_{\lambda\in \mathbb{C}}\left\|(B-\lambda A^*)x_n\right\|^2\right)\\
&\le&  \frac{\left\| X \right\|}{2} \left\| {{\left| {{A}^{*}} \right|}^{2}}+{{\left| B \right|}^{2}} \right\|,
\end{eqnarray*}
where the sequence $\{ x_n\}\subset \mathcal{H},  \|x_n\|=1$ such that $w(AXB)=\lim_{n\to \infty } | \langle AXB x_n,x_n\rangle  |.$\\
For $X=I$,
\begin{eqnarray}\label{Ineq_Particular}
w \left( AB \right) &\le&   \frac{1}{2}\left\|{{\left| {{A}^{*}} \right|}^{2}}+{{\left| B \right|}^{2}} \right\|-\lim_{n\to \infty }\frac{\left\| A^*x_n\right\|}{4\|Bx_n\|}\inf_{\lambda\in \mathbb{C}}\left\|(B-\lambda A^*)x_n\right\|^2\\
&\le&  \frac{1}{2} \left\|{{\left| {{A}^{*}} \right|}^{2}}+{{\left| B \right|}^{2}} \right\| \notag.
\end{eqnarray}

\end{theorem}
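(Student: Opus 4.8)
The plan is to derive Theorem~\ref{1} as a direct application of Lemma~\ref{Lemma-product}, exactly mirroring the structure of the proofs of Theorem~\ref{10} and Theorem~\ref{111}. First I would invoke Lemma~\ref{Lemma-product} with $y=x$, so that for any unit vector $x$ with $Bx\neq 0$ we obtain
\begin{eqnarray*}
\left| \left\langle AXBx,x \right\rangle  \right| &\le& \frac{\left\| X \right\|}{2}\left(2\|Bx\|\|A^*x\|-\frac{\inf_{\lambda\in \mathbb{C}}\|Bx-\lambda A^*x\|^2 }{2\|Bx\|}\|A^*x\|\right).
\end{eqnarray*}
The next step is to bound the product $2\|Bx\|\|A^*x\|$ from above by a single operator-norm expression. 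Using the elementary arithmetic--geometric mean estimate $2ab\le a^2+b^2$ with $a=\|Bx\|$ and $b=\|A^*x\|$, together with $\|Bx\|^2=\langle |B|^2 x,x\rangle$ and $\|A^*x\|^2=\langle |A^*|^2 x,x\rangle$, I would replace $2\|Bx\|\|A^*x\|$ by $\langle (|A^*|^2+|B|^2)x,x\rangle \le \left\| |A^*|^2+|B|^2 \right\|$.

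Combining these two steps gives, for every unit vector $x$ with $Bx\neq 0$,
\begin{eqnarray*}
\left| \left\langle AXBx,x \right\rangle  \right| &\le& \frac{\left\| X \right\|}{2}\left(\left\| {{\left| {{A}^{*}} \right|}^{2}}+{{\left| B \right|}^{2}} \right\|-\frac{\left\| A^*x\right\|}{2\|Bx\|}\inf_{\lambda\in \mathbb{C}}\left\|(B-\lambda A^*)x\right\|^2\right).
\end{eqnarray*}
To pass to the numerical radius I would choose a sequence $\{x_n\}$ of unit vectors with $w(AXB)=\lim_{n\to\infty}|\langle AXBx_n,x_n\rangle|$, apply the displayed inequality at each $x_n$, and take $\lim_{n\to\infty}$; the supremum on the left converges to $w(AXB)$ by the definition of the sequence, while the right-hand side converges to the stated limit expression. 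The second inequality in the theorem is immediate, since the subtracted limit term is non-negative. Finally, setting $X=I$ (so $\|X\|=1$) and noting $w(AB)=w(AXB)$ yields the particular case~\eqref{Ineq_Particular}.

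The main obstacle is a subtle one about the sequence $\{x_n\}$: the per-vector inequality requires $Bx_n\neq 0$, whereas the extremizing sequence for $w(AXB)$ need not a priori avoid the kernel of $B$. I would handle this by observing that if $Bx_n=0$ then $\langle AXBx_n,x_n\rangle=0$, so such terms contribute nothing to the limit and may be discarded (if infinitely many $x_n$ satisfy $Bx_n=0$ along the extremizing sequence then $w(AXB)=0$ and the bound holds trivially); hence we may assume $Bx_n\neq 0$ for all $n$. A secondary point requiring care is the interchange of the limit with the infimum-over-$\lambda$ term, but since we simply take the limit of the whole right-hand side as a single expression—exactly as in Theorem~\ref{th2}, Theorem~\ref{10}, and Theorem~\ref{111}—no genuine interchange is needed and the argument goes through verbatim.
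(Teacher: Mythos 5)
Your proposal is correct and follows essentially the same route as the paper's proof: apply Lemma \ref{Lemma-product} with $y=x$, bound $2\|Bx\|\|A^*x\|$ by $\langle(|A^*|^2+|B|^2)x,x\rangle\le\||A^*|^2+|B|^2\|$ via the arithmetic--geometric mean inequality, and pass to the limit along an extremizing sequence of unit vectors. Your additional remark handling the case $Bx_n=0$ is a point the paper leaves implicit, but it does not change the argument.
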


\begin{proof}
From Lemma \ref{Lemma-product}, we obtain
\[\begin{aligned}
   \left| \left\langle AXBx,x \right\rangle  \right|&\le \frac{\left\| X \right\|}{2}\left(2\|Bx\|\|A^*x\|-\frac{\inf_{\lambda\in \mathbb{C}}\|Bx-\lambda A^*x\|^2 }{2\|Bx\|}\|A^*x\|\right) \\ 
 & \le \frac{\left\| X \right\|}{2}\left(\left\langle \left( {{\left| {{A}^{*}} \right|}^{2}}+{{\left| B \right|}^{2}} \right)x,x \right\rangle -\frac{\inf_{\lambda\in \mathbb{C}}\|Bx-\lambda A^*x\|^2 }{2\|Bx\|}\|A^*x\|\right) \\  
 & \le \frac{\left\| X \right\|}{2}\left(\left\| {{\left| {{A}^{*}} \right|}^{2}}+{{\left| B \right|}^{2}} \right\|-\frac{\inf_{\lambda\in \mathbb{C}}\|Bx-\lambda A^*x\|^2 }{2\|Bx\|}\|A^*x\|\right), 
\end{aligned}\]
where $x\in \mathcal H$ with $\|x\|=1$ and $Bx\neq 0.$ This gives the desired bounds.

\end{proof}


\begin{remark}
    (i) Let $A, X \in \mathcal B(\mathcal H)$ with $A\neq 0$ and $X$ is positive.
    Put $B=A$ in Theorem \ref{1}, we get $w(AXA) \leq \frac{\|X\|}{2} \| A^*A+AA^*\|.$ \\
(ii) The inequality \eqref{Ineq_Particular} is a non-trivial improvement of 
    $
    w \left( AB \right)     \le  \frac{1}{2} \left\|{{\left| {{A}^{*}} \right|}^{2}}+{{\left| B \right|}^{2}} \right\|,
        $
        which follows from \cite[Remark 1, (17)]{Kittaneh_STD_2005}.
     This also follows from \cite[Proposition 2.8]{Satt} by setting $f(t)=g(t)=t^\frac{1}{2}, r=1, p=q=2, X=I$. \\
 (iii) By setting $\alpha=1, r=1, p=q=2$ in \cite[Theorem 3.1]{Satt} we have that
    $
    w \left( AXB \right)     \le  \frac{\|X\|}{2} \left\|{{ {{A}} }^{2}}+{{ B }^{2}} \right\|,
        $
       for $A, B, X \in \mathcal{B(H)}$ such that $A, B$ are positive.   In particular,
        $
            w \left( AB \right)     \le  \frac{1}{2} \left\|{{ {{A}} }^{2}}+{{ B }^{2}} \right\|.
        $
       Note that the bounds in Theorem \ref{1} are better (more generalize) than these above bounds.
\end{remark}

Suppose $S=U\left| S \right|$ is the polar decomposition. Letting $A=U{{\left| S \right|}^{\frac{1}{2}}}$, $X={{\left| S \right|}^{\frac{1}{2}}}$ and $B=T$ in Theorem \ref{1}, we obtain

\begin{corollary}\label{Cor_product_00}
   Let $T, S \in \mathcal B(\mathcal H)$, $T\neq 0$, $S=U\left| S \right|$ be the polar decomposition. Then
\begin{equation*}
w \left( ST \right)\le   \frac{\|S\|^{\frac{1}{2}}}{2}\left(\left\|{{\left| {{S}^{*}} \right|}}+{{\left| T \right|}^{2}} \right\|- \lim_{n\to \infty}\frac{\left\| |S^*|^{\frac{1}{2}}x_n\right\|}{2\|Tx_n\|}\inf_{\lambda\in \mathbb{C}}\|(T-\lambda|S^*|^{\frac{1}{2}}U^*)x_n\|^2\right),
\end{equation*}
where the sequence $\{ x_n\}\subset \mathcal{H},  \|x_n\|=1$ such that $w(ST)=\lim_{n\to \infty } | \langle STx_n,x_n\rangle  |.$
\end{corollary}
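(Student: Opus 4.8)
The plan is to obtain the stated inequality as a direct specialization of Theorem \ref{1}, using the polar decomposition $S=U|S|$. Concretely, I would set $A=U|S|^{1/2}$, $X=|S|^{1/2}$ and $B=T$ in Theorem \ref{1}. First I would verify the hypotheses: $X=|S|^{1/2}$ is positive and $B=T\neq 0$ by assumption, so Theorem \ref{1} applies and furnishes a sequence $\{x_n\}$ with $\|x_n\|=1$ realizing $w(AXB)=\lim_{n\to\infty}|\langle AXB\,x_n,x_n\rangle|$. It then remains to rewrite each of the four ingredients $AXB$, $\|X\|$, $\||A^*|^2+|B|^2\|$ and the correction term in terms of $S,T,U$, after which the claimed bound (together with the trailing sequence and limit) is inherited verbatim from Theorem \ref{1}.

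The routine simplifications come first. Since $S=U|S|$, the product collapses as $AXB=U|S|^{1/2}|S|^{1/2}T=U|S|T=ST$, which identifies the left-hand side. For the scalar factor, $\|X\|=\||S|^{1/2}\|=\|S\|^{1/2}$ by the spectral calculus, as $\||S|^{1/2}\|=\||S|\|^{1/2}$ and $\||S|\|=\|S\|$. The substantive step is the reduction of $|A^*|^2=AA^*$: here I would compute $AA^*=U|S|^{1/2}|S|^{1/2}U^*=U|S|U^*$ and invoke the standard polar-decomposition identity $|S^*|=U|S|U^*$, so that $|A^*|^2=|S^*|$ and hence $\||A^*|^2+|B|^2\|=\||S^*|+|T|^2\|$, matching the first term inside the parentheses.

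Finally I would treat the two quantities in the correction term. From $|A^*|^2=|S^*|$ one gets $\|A^*x_n\|^2=\langle AA^*x_n,x_n\rangle=\langle |S^*|x_n,x_n\rangle=\||S^*|^{1/2}x_n\|^2$, i.e.\ $\|A^*x_n\|=\||S^*|^{1/2}x_n\|$, which is exactly the numerator appearing in the displayed bound; the denominator $\|Bx_n\|=\|Tx_n\|$ is immediate. For the infimum, the substitution gives $B-\lambda A^*=T-\lambda\,|S|^{1/2}U^*$, and I would re-express $A^*$ through the intertwining relation $U|S|^{1/2}=|S^*|^{1/2}U$ (equivalently $|S|^{1/2}U^*=U^*|S^*|^{1/2}$), which holds because $U$ is a partial isometry with initial space $\overline{\mathrm{ran}\,|S|}$, so that the correction term assumes the form written with $|S^*|^{1/2}$. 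I expect the only delicate point to be the careful bookkeeping of these partial-isometry identities, in particular confirming $|S^*|=U|S|U^*$ and the intertwining relation on the correct subspaces; every other step is a mechanical substitution into Theorem \ref{1}.
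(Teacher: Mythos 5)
Your proposal is correct and follows exactly the paper's own route: the paper obtains this corollary precisely by substituting $A=U|S|^{1/2}$, $X=|S|^{1/2}$, $B=T$ into Theorem \ref{1}, merely without writing out the simplifications you supply ($AXB=ST$, $\|X\|=\|S\|^{1/2}$, $AA^*=U|S|U^*=|S^*|$, $\|A^*x_n\|=\||S^*|^{1/2}x_n\|$). One small remark: your bookkeeping correctly yields $A^*=|S|^{1/2}U^*=U^*|S^*|^{1/2}$, so the infimum term should read $\inf_{\lambda\in\mathbb{C}}\|(T-\lambda U^*|S^*|^{1/2})x_n\|^2$; the order $|S^*|^{1/2}U^*$ printed in the corollary appears to be a typographical slip, since $U^*$ need not commute with $|S^*|^{1/2}$.
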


We now provide an example to illustrate the numerical radius bounds in Theorem \ref{1}. 

\begin{example}
   Take $A=\begin{bmatrix}
        0 & 1 & 0\\
       0 &0 &2\\
        0 &0&0
    \end{bmatrix}$ and $X=B=I$. We have  $\|A\|=2$ and $w(A)=\frac{\sqrt{5}}{2}=|\langle Ax_0, x_0 \rangle|,$ where $x_0=\left(\frac{1}{\sqrt{10}}, \frac{1}{\sqrt{2}},\frac{\sqrt{2}}{\sqrt{5}}\right)\in \mathbb {C}^3$. Also, $\inf_{\lambda\in \mathbb{C}}\left\|(B-\lambda A^*)x_0\right\|^2 =\inf_{\lambda\in \mathbb{C}} \left\{\frac{1}{10}+\left|\frac{1}{\sqrt{2}}-\frac{\lambda}{\sqrt{10}}\right|^2+\left|\frac{\sqrt{2}}{\sqrt{5} }-\lambda \sqrt{2}\right|^2\right\}\approx 0.405$, $\|A^*x_0\|=\sqrt{\frac{21}{10}}$ and $\|Bx_0\|=1.$ Therefore, 
$\frac{\left\| A^*x_0\right\|}{2\|Bx_0\|}\inf_{\lambda\in \mathbb{C}}\left\|(B-\lambda A^*)x_0\right\|^2\approx 0.2934 $
and so
$$ \frac{\left\| X \right\|}{2} \left( \left\| {{\left| {{A}^{*}} \right|}^{2}}+{{\left| B \right|}^{2}} \right\|- \frac{\left\| A^*x_0\right\|}{2\|Bx_0\|}\inf_{\lambda\in \mathbb{C}}\left\|(B-\lambda A^*)x_0\right\|^2 \right)\approx 2.3532< 2.5=\frac{\left\| X \right\|}{2} \left\| {{\left| {{A}^{*}} \right|}^{2}}+{{\left| B \right|}^{2}} \right\|.$$
Thus, Theorem \ref{1} is a proper refinement of $w(AXB) \leq \frac{\left\| X \right\|}{2} \left\| {{\left| {{A}^{*}} \right|}^{2}}+{{\left| B \right|}^{2}} \right\|.$
\end{example}

\section{A Refinement of the numerical radius bounds via contraction operators}\label{sec2-1}

In this section, we study the numerical radius bounds for a single operator as well as product operators involving contraction operators. 
We begin with the following known lemma.
\begin{lemma}\cite[Lemma 1.1]{Sababheh_Moradi_Sahoo_LAMA_2024}\label{lemma12}
    Let $A,B,C\in \mathcal B\left( \mathcal H \right)$, where $A$ and $B$ are positive and let $x,y\in \mathcal H$. Then the following statements are equivalent.\\
    (i)  $\left[ \begin{matrix}
   A & {{C}^{*}}  \\
   C & B  \\
\end{matrix} \right]\in \mathcal B\left( \mathcal H\oplus \mathcal H \right)$ is positive. \\
    (ii) $|\langle Cx,y\rangle|\leq \sqrt{\langle Ax,x\rangle \langle By,y\rangle}.$\\
    (iii) There is a contraction $K$ (i.e., $\|K\|\leq 1$) such that $C=B^{1/2}KA^{1/2}.$
\end{lemma}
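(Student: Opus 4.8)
The plan is to prove the cyclic chain $(iii)\Rightarrow(i)\Rightarrow(ii)\Rightarrow(iii)$; this is the classical block-positivity characterization in the spirit of the Douglas factorization lemma. The implication $(iii)\Rightarrow(i)$ I would handle by exhibiting an explicit congruence. Setting $M=\begin{bmatrix} A^{1/2} & 0 \\ 0 & B^{1/2}\end{bmatrix}$ and using $C=B^{1/2}KA^{1/2}$ (so that $C^*=A^{1/2}K^*B^{1/2}$), a direct block computation gives
\[
\begin{bmatrix} A & C^* \\ C & B\end{bmatrix}=M^*\begin{bmatrix} I & K^* \\ K & I\end{bmatrix}M.
\]
Since $\|K\|\le 1$ forces $I-K^*K\ge 0$, the Schur-complement criterion shows the middle factor is positive, and a congruence $M^*PM$ of a positive $P$ is positive; hence (i) follows.

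For $(i)\Rightarrow(ii)$ I would pass to the quadratic form. Positivity of the block matrix means
\[
\langle Ax,x\rangle+2\,\mathrm{Re}\,\langle Cx,y\rangle+\langle By,y\rangle\ge 0\qquad\text{for all }x,y\in\mathcal H.
\]
Replacing $y$ by $e^{i\theta}y$ and choosing the phase that minimizes the middle term reduces this to $2|\langle Cx,y\rangle|\le \langle Ax,x\rangle+\langle By,y\rangle$. Then the substitution $x\mapsto sx$, $y\mapsto s^{-1}y$ with $s>0$ leaves the left-hand side unchanged and, after minimizing $s^2\langle Ax,x\rangle+s^{-2}\langle By,y\rangle$ over $s$ (optimal value $2\sqrt{\langle Ax,x\rangle\langle By,y\rangle}$), yields exactly (ii); the degenerate cases $\langle Ax,x\rangle=0$ or $\langle By,y\rangle=0$ are treated by letting $s\to 0$ or $s\to\infty$, which forces $\langle Cx,y\rangle=0$ so that (ii) holds trivially.

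The main obstacle is $(ii)\Rightarrow(iii)$, where the contraction $K$ must be built from scratch by a Douglas-type argument. Here I would define a sesquilinear form on $\mathrm{Ran}(A^{1/2})\times\mathrm{Ran}(B^{1/2})$ by $\phi(A^{1/2}x,B^{1/2}y)=\langle Cx,y\rangle$. The crucial point is well-definedness: if $A^{1/2}x=A^{1/2}x'$ then $\langle A(x-x'),x-x'\rangle=0$, and (ii) forces $\langle C(x-x'),y\rangle=0$ for all $y$; the analogous argument handles the second slot. Inequality (ii) then reads $|\phi(A^{1/2}x,B^{1/2}y)|\le\|A^{1/2}x\|\,\|B^{1/2}y\|$, so $\phi$ is bounded with norm at most $1$ and extends to the closed ranges. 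The Riesz representation theorem for bounded sesquilinear forms produces a contraction $K_0\colon \overline{\mathrm{Ran}(A^{1/2})}\to\overline{\mathrm{Ran}(B^{1/2})}$ with $\phi(u,v)=\langle K_0u,v\rangle$; extending by $0$ on $(\mathrm{Ran}(A^{1/2}))^\perp$ gives a contraction $K$ on $\mathcal H$. Finally, the identity $\langle B^{1/2}KA^{1/2}x,y\rangle=\langle KA^{1/2}x,B^{1/2}y\rangle=\phi(A^{1/2}x,B^{1/2}y)=\langle Cx,y\rangle$ for all $x,y$ yields $C=B^{1/2}KA^{1/2}$, closing the cycle. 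The delicate points to get right are the well-definedness of $\phi$ and the careful bookkeeping of the closures of the two ranges.
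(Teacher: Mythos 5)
Your proposal is correct: the congruence $M^*\left[\begin{smallmatrix} I & K^* \\ K & I\end{smallmatrix}\right]M$ for (iii)$\Rightarrow$(i), the phase-and-scaling (AM--GM) argument for (i)$\Rightarrow$(ii), and the Douglas-type construction of the contraction via a bounded sesquilinear form on $\mathrm{Ran}(A^{1/2})\times\mathrm{Ran}(B^{1/2})$ for (ii)$\Rightarrow$(iii) all go through, and you handle the genuinely delicate points (well-definedness of $\phi$, the degenerate cases) correctly. Note, however, that the paper does not prove this lemma at all --- it is quoted verbatim from \cite[Lemma 1.1]{Sababheh_Moradi_Sahoo_LAMA_2024} --- so there is no in-paper argument to compare against; your proof is the standard one for this block-positivity characterization.
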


Using Lemma \ref{lemma12}, we now prove another equivalent statement of the positivity of the block matrix $\left[ \begin{matrix}
   A & {{C}^{*}}  \\
   C & B  \\
\end{matrix} \right]\in \mathcal B\left( \mathcal H\oplus \mathcal H \right)$.

\begin{theorem} \label{lem_3.1}  
    Let $A,B,C\in \mathcal B\left( \mathcal H \right)$, where $A$ and $B$ are positive and let $x,y\in \mathcal H$. Then
  $\left[ \begin{matrix}
   A & {{C}^{*}}  \\
   C & B  \\
\end{matrix} \right]\in \mathcal B\left( \mathcal H\oplus \mathcal H \right)$ is positive  if and only if there is a contraction $K$ such that
$\left| \left\langle Cx,y \right\rangle  \right| \le \sqrt{\left\langle {{A}^{\frac{1}{2}}}{{\left| K \right|}}{{A}^{\frac{1}{2}}}x,x \right\rangle \left\langle {{B}^{\frac{1}{2}}}{{\left| {{K}^{*}} \right|}}{{B}^{\frac{1}{2}}}y,y \right\rangle }.$ 
\end{theorem}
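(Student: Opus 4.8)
The plan is to prove both directions by leveraging the equivalence already established in Lemma \ref{lemma12}, treating the absolute values $|K|$ and $|K^*|$ as the new ``positive weights'' in the Cauchy--Schwarz-type bound (ii). The key algebraic fact I would use is the polar decomposition $K = V|K|$, where $V$ is a partial isometry, together with the standard identity $|K^*| = V|K|V^*$, so that $K = V|K| = |K^*|^{1/2}\,W\,|K|^{1/2}$ for a suitable partial isometry $W$ (obtained by splitting $|K|$ symmetrically). This symmetric factorization is what makes the two square-root weights $|K|$ and $|K^*|$ appear naturally on the two sides of the inner product.

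For the forward direction, I assume positivity of the block matrix. By Lemma \ref{lemma12}(iii) there is a contraction $K$ with $C = B^{1/2} K A^{1/2}$. I would then compute
\begin{eqnarray*}
\left|\left\langle Cx,y\right\rangle\right|
&=& \left|\left\langle B^{1/2}KA^{1/2}x,\,y\right\rangle\right|
= \left|\left\langle K A^{1/2}x,\,B^{1/2}y\right\rangle\right|.
\end{eqnarray*}
Writing $K = V|K| = |K^*|^{1/2}W|K|^{1/2}$ and applying the ordinary Cauchy--Schwarz inequality to the inner product $\langle W\,|K|^{1/2}A^{1/2}x,\,|K^*|^{1/2}B^{1/2}y\rangle$ (using that $\|W\|\le 1$) would yield the bound
\begin{eqnarray*}
\left|\left\langle Cx,y\right\rangle\right|
&\le& \left\||K|^{1/2}A^{1/2}x\right\|\,\left\||K^*|^{1/2}B^{1/2}y\right\|
= \sqrt{\left\langle A^{1/2}|K|A^{1/2}x,x\right\rangle\,\left\langle B^{1/2}|K^*|B^{1/2}y,y\right\rangle},
\end{eqnarray*}
which is exactly the claimed inequality.

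For the converse, I assume there is a contraction $K$ with the displayed inequality. The strategy is to recognize the right-hand side as the geometric-mean bound associated, via Lemma \ref{lemma12}(ii), with the two positive operators $A^{1/2}|K|A^{1/2}$ and $B^{1/2}|K^*|B^{1/2}$. Since $\|K\|\le 1$ forces $|K|\le I$ and $|K^*|\le I$, we get $A^{1/2}|K|A^{1/2}\le A$ and $B^{1/2}|K^*|B^{1/2}\le B$ in the operator order; hence
\begin{eqnarray*}
\left|\left\langle Cx,y\right\rangle\right|
&\le& \sqrt{\left\langle A^{1/2}|K|A^{1/2}x,x\right\rangle\,\left\langle B^{1/2}|K^*|B^{1/2}y,y\right\rangle}
\le \sqrt{\left\langle Ax,x\right\rangle\,\left\langle By,y\right\rangle}.
\end{eqnarray*}
Applying Lemma \ref{lemma12} (the implication (ii)$\Rightarrow$(i)) then gives positivity of the block matrix, closing the equivalence.

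The main obstacle I anticipate is the forward direction, specifically justifying the symmetric factorization $K = |K^*|^{1/2}W|K|^{1/2}$ cleanly on a general (possibly infinite-dimensional, non-invertible) operator: the identities $KA^{1/2} = |K^*|^{1/2}\,W\,|K|^{1/2}A^{1/2}$ must be handled with care on the closures of ranges, and one must verify $\|W\|\le 1$ rather than just that $W$ is a partial isometry. An alternative, perhaps smoother, route avoiding explicit partial isometries is to apply Lemma \ref{lemma12}(iii) to the \emph{inner} positive block built from $|K|$ and $|K^*|$ directly, writing $K=|K^*|^{1/2}(\,\cdot\,)|K|^{1/2}$ through the contraction obtained from the positivity of $\left[\begin{smallmatrix}|K| & K^*\\ K & |K^*|\end{smallmatrix}\right]$, which is itself positive precisely because $K$ is a contraction; I would verify this last positivity by the polar decomposition and then feed it back through Lemma \ref{lemma12}.
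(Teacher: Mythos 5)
Your proposal is correct and follows essentially the same route as the paper: Lemma \ref{lemma12}(iii) gives $C=B^{1/2}KA^{1/2}$, the mixed Schwarz inequality $|\langle Ku,v\rangle|^2\le\langle |K|u,u\rangle\langle |K^*|v,v\rangle$ yields the forward implication, and the converse uses $|K|\le I$, $|K^*|\le I$ together with (ii)$\Rightarrow$(i) of Lemma \ref{lemma12}, exactly as in the paper. The only difference is that you re-derive the mixed Schwarz step via the factorization $K=|K^*|^{1/2}V|K|^{1/2}$ (which does hold cleanly with $W=V$ the partial isometry of the polar decomposition, since $|K^*|^{1/2}=V|K|^{1/2}V^*$ and $V^*V|K|^{1/2}=|K|^{1/2}$), whereas the paper simply invokes this inequality, available as Lemma \ref{lem2-2} with $f(t)=g(t)=\sqrt{t}$.
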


\begin{proof}
Suppose  $\left[ \begin{matrix}
   A & {{C}^{*}}  \\
   C & B  \\
\end{matrix} \right]\in \mathcal B\left( \mathcal H\oplus \mathcal H \right)$ is positive. Then form Lemma \ref{lemma12}, we obtain
\begin{eqnarray*}
    \left| \left\langle Cx,y \right\rangle  \right|^2=\left| \left\langle B^{1/2}KA^{1/2}x,y \right\rangle  \right|^2&=&  \left| \left\langle KA^{1/2}x,B^{1/2}y \right\rangle  \right|^2 \\
    &\leq& \langle |K|A^{1/2}x,A^{1/2}x\rangle \langle |K^*|B^{1/2}y,B^{1/2}y\rangle\\
    &=& \langle A^{1/2}|K|A^{1/2}x,x\rangle \langle B^{1/2}|K^*|B^{1/2}y,y\rangle.
\end{eqnarray*}
Conversely, let
$\left| \left\langle Cx,y \right\rangle  \right| \le \sqrt{\left\langle {{A}^{\frac{1}{2}}}{{\left| K \right|}}{{A}^{\frac{1}{2}}}x,x \right\rangle \left\langle {{B}^{\frac{1}{2}}}{{\left| {{K}^{*}} \right|}}{{B}^{\frac{1}{2}}}y,y \right\rangle }.$ Since $|K|\leq I$ and $|K^*|\leq I$, we have $ \sqrt{\left\langle {{A}^{\frac{1}{2}}}{{\left| K \right|}}{{A}^{\frac{1}{2}}}x,x \right\rangle \left\langle {{B}^{\frac{1}{2}}}{{\left| {{K}^{*}} \right|}}{{B}^{\frac{1}{2}}}y,y \right\rangle } \leq \sqrt{\left\langle Ax,x \right\rangle \left\langle By,y \right\rangle }.$ Thus, Lemma \ref{lemma12} implies $\left[ \begin{matrix}
   A & {{C}^{*}}  \\
   C & B  \\
\end{matrix} \right]\in \mathcal B\left( \mathcal H\oplus \mathcal H \right)$ is positive.
\end{proof}

Next, we need the following lemma.

\begin{lemma}   \cite{K_PRIMS_88} \label{lem2-2}
     Let $T\in \mathcal{B}(\mathcal{H})$ and $f,g$ are non-negative continuous functions on $[0,\infty)$ such that $f(t)g(t)=t$ for all $t\geq 0.$ Then
     $|\langle Tx,y \rangle|\leq \sqrt{\langle f^2(|T|)x,x\rangle \langle g^2(|T^*|)y,y\rangle}$ for all $x,y\in \mathcal{H}.$
\end{lemma}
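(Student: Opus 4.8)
The plan is to run the mixed Cauchy--Schwarz argument through the polar decomposition $T = U|T|$, splitting the single factor $|T|$ by means of the hypothesis $f(t)g(t) = t$. Since $f(|T|)$ and $g(|T|)$ are both functions of $|T|$ they commute, and $g(|T|)f(|T|) = (gf)(|T|) = |T|$; hence I would first rewrite
\[
\langle Tx, y\rangle = \langle U|T|x, y\rangle = \langle U g(|T|) f(|T|) x, y\rangle = \langle f(|T|)x,\, g(|T|)U^* y\rangle,
\]
using $\bigl(U g(|T|)\bigr)^* = g(|T|) U^*$. Applying the ordinary Cauchy--Schwarz inequality then gives $|\langle Tx, y\rangle| \le \|f(|T|)x\|\,\|g(|T|)U^* y\|$.

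The first factor is immediate, namely $\|f(|T|)x\|^2 = \langle f^2(|T|)x, x\rangle$, matching the right-hand side of the lemma. For the second factor I would compute $\|g(|T|)U^* y\|^2 = \langle U g^2(|T|) U^* y, y\rangle$, so that the entire statement reduces to the operator inequality
\[
U g^2(|T|) U^* \le g^2(|T^*|).
\]
To establish this, the key structural fact is $U|T|U^* = |T^*|$: indeed $U^*U$ is the orthogonal projection onto $\overline{\operatorname{ran}|T|}$ and therefore fixes $|T|$, so squaring $U|T|U^*$ collapses to $U|T|^2U^* = TT^* = |T^*|^2$, and positivity of $U|T|U^*$ identifies it with $|T^*|$. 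Iterating yields $U|T|^nU^* = |T^*|^n$ for every $n\ge 1$, hence $U p(|T|)U^* = p(|T^*|)$ for each polynomial $p$ with $p(0)=0$; by the Weierstrass theorem applied on the compact spectrum of $|T|$ this extends to $U h(|T|)U^* = h(|T^*|)$ for every continuous $h$ with $h(0)=0$.

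The step I expect to be the main obstacle is the passage from $h(0)=0$ to a general continuous $h = g^2 \ge 0$, since $g(0)$ need not vanish. I would handle it by writing $h = \bigl(h - h(0)\bigr) + h(0)$ and applying the previous paragraph to $h - h(0)$, which does vanish at $0$; together with $UU^*$ being the projection onto $\overline{\operatorname{ran}T}$ this gives
\[
U g^2(|T|) U^* = g^2(|T^*|) - g^2(0)\,(I - UU^*).
\]
Because $g^2(0) \ge 0$ and $I - UU^*$ is the positive defect projection onto $(\overline{\operatorname{ran}T})^\perp = \ker|T^*|$, the subtracted term is positive, which yields precisely $U g^2(|T|)U^* \le g^2(|T^*|)$ and closes the argument. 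A pleasant feature of this route is that the degenerate cases (where $f(0)=0$ or $g(0)=0$) are absorbed automatically, with no need for separate treatment, and specializing $f(t)=t^t$... more precisely $f(t)=t^\alpha,\ g(t)=t^{1-\alpha}$ recovers Kato's inequality \eqref{r2} as the case $g(0)=0$.
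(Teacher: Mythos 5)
Your proof is correct. The paper does not prove this lemma at all --- it is quoted from Kittaneh's 1988 paper \cite{K_PRIMS_88} --- and your argument is essentially the standard one from that source: factor $|T|=g(|T|)f(|T|)$ through the polar decomposition, apply Cauchy--Schwarz, and reduce to the operator inequality $Ug^2(|T|)U^*\le g^2(|T^*|)$ via $U|T|^nU^*=|T^*|^n$ and polynomial approximation. You correctly identified and closed the one delicate point, namely that $g(0)$ need not vanish, by isolating the defect term $g^2(0)(I-UU^*)$ and observing it is positive.
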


By applying  Theorem \ref{lem_3.1} and Lemma \ref{lem2-2}, we obtain the following Kato's type inequality, which is an improved version of \cite[Corollary 2.1]{Sababheh_Moradi_Sahoo_LAMA_2024}.
\begin{proposition}\label{Cor_contraction}
Let $T\in \mathcal B\left( \mathcal H \right)$. If  $f,g$ are non-negative continuous functions on $\left[ 0,\infty  \right)$  satisfying $f\left( t \right)g\left( t \right)=t$, $t>0$, then there is a contraction operator $K$ such that
\[\left| \left\langle Tx,y \right\rangle  \right|\le \sqrt{  \left\langle f\left( \left| T \right| \right){{\left| {{K}^{}} \right|}}f\left( \left| T \right| \right)x,x \right\rangle \left\langle g\left( \left| {{T}^{*}} \right| \right){{\left| K^* \right|}}g\left( \left| {{T}^{*}} \right| \right)y,y \right\rangle  }, \quad \text{for all } x,y\in \mathcal H.\]
\end{proposition}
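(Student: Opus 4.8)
The plan is to combine the three ingredients already available: the Kato-type inequality of Lemma \ref{lem2-2}, the positivity characterization of Lemma \ref{lemma12}, and the refined characterization of Theorem \ref{lem_3.1}. The key observation is that the hypothesis $f(t)g(t)=t$ is precisely what makes the block matrix built from $f^2(|T|)$, $g^2(|T^*|)$, and $T$ positive, after which Theorem \ref{lem_3.1} upgrades this positivity into the desired bound involving a contraction.

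Concretely, first I would set $A=f^2(|T|)$, $B=g^2(|T^*|)$, and $C=T$. Since $f$ and $g$ are non-negative continuous functions, the continuous functional calculus gives $A\ge 0$ and $B\ge 0$, so these are legitimate candidates for the corners of the block matrix. Lemma \ref{lem2-2} then yields, for all $x,y\in\mathcal H$,
\[
|\langle Tx,y\rangle|\le \sqrt{\langle f^2(|T|)x,x\rangle\,\langle g^2(|T^*|)y,y\rangle}=\sqrt{\langle Ax,x\rangle\,\langle By,y\rangle}.
\]
This is exactly statement (ii) of Lemma \ref{lemma12} for the triple $(A,B,C)$, so by the equivalence (ii)$\Leftrightarrow$(i) there the operator matrix $\left[\begin{matrix} A & C^* \\ C & B \end{matrix}\right]=\left[\begin{matrix} f^2(|T|) & T^* \\ T & g^2(|T^*|) \end{matrix}\right]$ is positive on $\mathcal H\oplus\mathcal H$.

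With positivity in hand, I would apply Theorem \ref{lem_3.1} to this same block matrix: it provides a contraction $K$ with
\[
|\langle Tx,y\rangle|\le \sqrt{\langle A^{1/2}|K|A^{1/2}x,x\rangle\,\langle B^{1/2}|K^*|B^{1/2}y,y\rangle}.
\]
The final step is to identify the square roots. Because $f\ge 0$, the operator $f(|T|)$ is positive and $\big(f(|T|)\big)^2=f^2(|T|)=A$, so by uniqueness of the positive square root $A^{1/2}=f(|T|)$; likewise $B^{1/2}=g(|T^*|)$. Substituting these identities into the displayed inequality gives exactly the claimed estimate.

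There is no serious obstacle here — the statement is essentially a dictionary translation between the two characterizations. The only point demanding care is the square-root identification $A^{1/2}=f(|T|)$ and $B^{1/2}=g(|T^*|)$, which is where the non-negativity assumption on $f$ and $g$ (as opposed to merely $f(t)g(t)=t$) is genuinely used; without it one could not pull $f(|T|)$ outside as a positive square root. Everything else is a direct invocation of Lemma \ref{lem2-2}, Lemma \ref{lemma12}, and Theorem \ref{lem_3.1}.
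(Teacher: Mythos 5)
Your proposal is correct and follows exactly the paper's own route: Lemma \ref{lem2-2} establishes the hypothesis (ii) of Lemma \ref{lemma12} for the triple $\left(f^2(|T|),\, g^2(|T^*|),\, T\right)$, whence the block matrix is positive and Theorem \ref{lem_3.1} delivers the contraction bound. Your explicit remark that $\left(f^2(|T|)\right)^{1/2}=f(|T|)$ by uniqueness of the positive square root is a detail the paper leaves implicit, but it is the same argument.
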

\begin{proof}
    From Lemma \ref{lem2-2}, we get $|\langle Tx,y\rangle|^2 \leq \langle f^2(|T|)x,x\rangle \langle g^2(|T^*|)y,y\rangle$ for all $x,y\in \mathcal{H}$ and so  from Lemma \ref{lemma12}, we get $\left[ \begin{matrix}
   f^2(|T|) & {{T}^{*}}  \\
   T & g^2(|T^*|)  \\
\end{matrix} \right]\in \mathcal B\left( \mathcal H\oplus \mathcal H \right)$ is positive. Therefore, Theorem \ref{lem_3.1} implies the desired result.
\end{proof}

Applying Proposition \ref{Cor_contraction}, we now obtain an upper bound for the numerical radius.

\begin{theorem}\label{lem_3.2} 
    Let $T\in \mathcal B\left( \mathcal H \right)$. If  $f,g$ are non-negative continuous functions on $\left[ 0,\infty  \right)$  satisfying $f(t)g(t)=t$ ($t\geq 0$), then there is a contraction operator $K$ such that
\[\begin{aligned}
   w \left( T \right)&\le \frac{1}{2}\left\| 
   f\left( \left| T \right| \right){{\left| {{K}^{}} \right|}^{}}f\left( \left| T \right| \right) + g\left( \left| {{T}^{*}} \right| \right){{\left| K^* \right|}^{}}g\left( \left| {{T}^{*}} \right| \right) \right\|  
  \le \frac{1}{2}\left\| {{f}^{2}}\left( \left| T \right| \right) + {{g}^{2}}\left( \left| {{T}^{*}} \right| \right) \right\|. 
\end{aligned}\]
In particular, for $f(t)=g(t)=\sqrt{t}$, 
\begin{eqnarray}\label{new---1}
 w \left( T \right)&\le \frac{1}{2}\left\|  \left| {{T}^{}} \right|^\frac{1}{2} {{\left| K \right|}^{}} \left| {{T}^{}} \right|^\frac{1}{2} + \left| T^* \right|^\frac{1}{2}{{\left| {{K}^{*}} \right|}^{}} \left| T^* \right|^\frac{1}{2} \right\|  \le \frac{1}{2}\big\|  \left| {{T}^{}} \right| + \left| T^* \right|  \big\|.  
\end{eqnarray}
\end{theorem}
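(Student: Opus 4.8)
The plan is to start from Proposition~\ref{Cor_contraction}, which already provides a contraction $K$ satisfying
\[
\left| \left\langle Tx,y \right\rangle  \right|\le \sqrt{  \left\langle f\left( \left| T \right| \right){{\left| {{K}^{}} \right|}}f\left( \left| T \right| \right)x,x \right\rangle \left\langle g\left( \left| {{T}^{*}} \right| \right){{\left| K^* \right|}}g\left( \left| {{T}^{*}} \right| \right)y,y \right\rangle  }
\]
for all $x,y$. To pass from this two-vector inequality to a numerical radius bound, the natural move is to specialize $y=x$ with $\|x\|=1$ and then apply the elementary arithmetic--geometric mean inequality $\sqrt{ab}\le \frac12(a+b)$ to the right-hand side. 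This gives
\[
\left| \left\langle Tx,x \right\rangle  \right|\le \frac12\left( \left\langle f(|T|)|K|f(|T|)x,x \right\rangle + \left\langle g(|T^*|)|K^*|g(|T^*|)x,x \right\rangle \right),
\]
and recognizing the right side as the expectation of the single operator $f(|T|)|K|f(|T|)+g(|T^*|)|K^*|g(|T^*|)$ in the state $x$, I would take the supremum over unit vectors $x$ and use $w(T)=\sup_{\|x\|=1}|\langle Tx,x\rangle|$ together with $\sup_{\|x\|=1}\langle Sx,x\rangle=\|S\|$ for the positive operator $S$ inside. This yields the first inequality $w(T)\le \frac12\left\| f(|T|)|K|f(|T|)+g(|T^*|)|K^*|g(|T^*|)\right\|$.

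The second inequality is then a monotonicity statement: since $K$ is a contraction, $\|K\|\le 1$, so $|K|\le I$ and $|K^*|\le I$. Conjugating these operator inequalities by $f(|T|)$ and $g(|T^*|)$ respectively preserves the order, giving $f(|T|)|K|f(|T|)\le f^2(|T|)$ and $g(|T^*|)|K^*|g(|T^*|)\le g^2(|T^*|)$; adding the two and using that the norm is monotone on positive operators produces $\frac12\left\|\cdots\right\|\le \frac12\left\|f^2(|T|)+g^2(|T^*|)\right\|$, which is the desired upper bound. The particular case~\eqref{new---1} follows by substituting $f(t)=g(t)=\sqrt t$, so that $f^2(|T|)=|T|$ and $g^2(|T^*|)=|T^*|$, recovering the refinement of Kittaneh's bound \eqref{k1}.

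I do not anticipate a serious obstacle here; the result is essentially a corollary of Proposition~\ref{Cor_contraction}, and the two steps (the AM--GM passage from vectors to the operator norm, and the contraction monotonicity $|K|\le I$) are both routine. The only point demanding a little care is the justification that $\sup_{\|x\|=1}\langle Sx,x\rangle=\|S\|$ for the positive operator $S=f(|T|)|K|f(|T|)+g(|T^*|)|K^*|g(|T^*|)$; this is standard but relies on $S$ being positive, which is precisely why the contraction formulation giving $|K|,|K^*|\ge 0$ rather than $K$ itself is the right object to work with. Thus the whole argument amounts to specializing $y=x$, applying AM--GM, taking the supremum, and invoking contraction monotonicity.
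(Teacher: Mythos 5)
Your proposal matches the paper's own proof essentially step for step: the paper also sets $y=x$ in Proposition \ref{Cor_contraction}, applies the AM--GM inequality to the product of the two quadratic forms, and takes the supremum over unit vectors to get the first bound, with the second bound following from $|K|\le I$ and $|K^*|\le I$ exactly as you describe. The argument is correct and complete.
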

\begin{proof}
  From Proposition \ref{Cor_contraction}, we get
\begin{eqnarray*}
    |\langle Tx,x\rangle| &\leq& \langle f(|T|)|K| f(|T|)x,x\rangle^{1/2} \langle g(|T^*|)|K^*|g(|T^*|)x,x\rangle^{1/2}\\
    &\leq& \frac12 \langle (f(|T|)|K| f(|T|) + g(|T^*|)|K^*| g(|T^*|))x,x\rangle.
\end{eqnarray*}
Taking the supremum over $\|x\|=1,$ we get the desired first bound.
\end{proof}

\begin{remark}\label{Rem_3.6}
    For any contraction operator $K$, we see that $|K|\leq |K|^{1/2}\leq I$ and $|K^*|\leq |K^*|^{1/2}\leq I$. Therefore, Theorem \ref{lem_3.2} implies the existing bound \cite[Theorem 2.5]{Sababheh_Moradi_Sahoo_LAMA_2024}, namely,
    $w \left( T \right) \le \frac{1}{2}\left\| 
   f\left( \left| T \right| \right){{\left| K \right|}^{1/2}}f\left( \left| T \right| \right) + g\left( \left| {{T}^{*}} \right| \right){{\left| K^* \right|}^{1/2}}g\left( \left| {{T}^{*}} \right| \right) \right\|$ and 
    also implies \cite[Theorem 1]{El-Haddad and F. Kittaneh}, namely,
   $w(T) \leq \frac{1}{2}\left\| |T|^{2t}+ |T^*|^{2(1-t)} \right\|, $ for all $t\in [0,1].$
\end{remark}


 Again, by applying Theorem \ref{lem_3.1} and using the positivity of $\left[ \begin{matrix}
   {{\left| B \right|}^{2}} & {{B}^{*}}A  \\
   {{A}^{*}}B & {{\left| A \right|}^{2}}  \\
\end{matrix} \right]\in \mathcal{B}(\mathcal{H}\oplus \mathcal{H})$, we can also obtain the following numerical radius bound for the product two operators, which improves the bound in  \cite[Remark 2.4]{Sababheh_Moradi_Sahoo_LAMA_2024} as well as \cite[(17)]{Kittaneh_STD_2005}.
\begin{corollary}\label{cor---1}
    If $A,B\in \mathcal B\left( \mathcal H \right)$, then there is a contraction operator $K$ such that
$$\begin{aligned}
   w \left( {{A}^{*}}B \right)&\le \frac{1}{2}\Big\|  \left| A \right|{{\left| {{K}^{*}} \right|}}\left| A \right|+ \left| B \right|{{\left| K \right|}}\left| B \right| \Big\| 
   \le \frac{1}{2}\left\| A^*A+B^*B \right\|.  
\end{aligned}$$
\end{corollary}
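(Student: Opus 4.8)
The plan is to realize the product $A^*B$ as the off-diagonal block of a canonically positive $2\times 2$ operator matrix and then invoke Theorem \ref{lem_3.1}. First I would record the factorization
\[
\begin{bmatrix} |B|^2 & B^*A \\ A^*B & |A|^2 \end{bmatrix}
= \begin{bmatrix} B^* \\ A^* \end{bmatrix}\begin{bmatrix} B & A \end{bmatrix},
\]
whose right-hand side is of the form $M^*M$ and is therefore positive on $\mathcal H\oplus\mathcal H$; its diagonal entries are $B^*B=|B|^2$ and $A^*A=|A|^2$ and its $(2,1)$-entry is $A^*B$, exactly as needed. This positivity is the only structural input.

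Next I would apply Theorem \ref{lem_3.1} to this block matrix, letting the top-left corner $|B|^2$ and the bottom-right corner $|A|^2$ play the roles of the two positive operators and letting $A^*B$ play the role of $C$. Since $(|B|^2)^{1/2}=|B|$ and $(|A|^2)^{1/2}=|A|$, the theorem supplies a contraction $K$ with
\[
|\langle A^*B x, y\rangle| \le \sqrt{\langle |B|\,|K|\,|B| x, x\rangle \, \langle |A|\,|K^*|\,|A| y, y\rangle}, \qquad x,y\in\mathcal H.
\]

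To convert this bilinear bound into a numerical radius bound I would set $y=x$ with $\|x\|=1$ and apply the scalar inequality $\sqrt{ab}\le\tfrac12(a+b)$ to the two non-negative numbers $a=\langle |B|\,|K|\,|B| x, x\rangle$ and $b=\langle |A|\,|K^*|\,|A| x, x\rangle$, giving
\[
|\langle A^*B x, x\rangle| \le \tfrac12\big\langle\big(|A|\,|K^*|\,|A|+|B|\,|K|\,|B|\big)x, x\big\rangle.
\]
Taking the supremum over unit vectors and using that $\sup_{\|x\|=1}\langle Px,x\rangle=\|P\|$ for a positive operator $P$ yields the first inequality.

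Finally, the second inequality is a monotonicity step: as $K$ is a contraction, $|K|\le I$ and $|K^*|\le I$, so $|A|\,|K^*|\,|A|\le |A|^2=A^*A$ and $|B|\,|K|\,|B|\le |B|^2=B^*B$; adding these operator inequalities and using monotonicity of the operator norm on positive operators gives the bound by $\tfrac12\|A^*A+B^*B\|$. I do not expect a genuine obstacle here; the only delicate points are matching the four entries correctly into the template of Theorem \ref{lem_3.1} and noticing that the operator square roots collapse $|A|^2,|B|^2$ back to $|A|,|B|$.
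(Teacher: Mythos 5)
Your proposal is correct and matches the paper's own argument: the paper likewise derives the corollary by applying Theorem \ref{lem_3.1} to the positive block matrix $\left[\begin{smallmatrix} |B|^2 & B^*A \\ A^*B & |A|^2 \end{smallmatrix}\right]$, then setting $y=x$, using $\sqrt{ab}\le\frac12(a+b)$, taking the supremum over unit vectors, and finishing with $|K|\le I$, $|K^*|\le I$. No discrepancies to report.
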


 \noindent 
 
 The following example illustrates that the inequalities in Corollary \ref{cor---1} are proper.
 Consider  $A=\begin{bmatrix}
     0 & 1& 0\\
     0 &0 &2\\
     0&0&0
 \end{bmatrix}$ and $B=I.$
 Then there exists a contraction $K= \begin{bmatrix}
     0 & 0& 0\\
     1 &0 &0\\
     0&1&0
 \end{bmatrix}$ such that $A^*B=|A|K|B|$, and  we see that
$$ w(A^*B)=\frac{\sqrt{5}}{2} <\frac{1}{2}\Big\|  \left| A \right|{{\left| {{K}^{*}} \right|}}\left| A \right|+ \left| B \right|{{\left| K \right|}}\left| B \right| \Big\| =2< \frac{5}{2}
   = \frac{1}{2}\left\| A^*A+B^*B \right\|. $$

\section{Schatten $p$-norm and $p$-numerical radius bounds via singular values}\label{sec3}

\noindent

In this section, we develop the Schatten $p$-norm inequalities for the sum of two $n\times n$ complex matrices and from which we deduce the $p$-numerical radius bounds. To get the results first we need the following lemma.


\begin{lemma}\label{lem2-3} \cite[Corollary III.1.2]{Bhatia}
    Let $T\in \mathcal{M}_n(\mathbb C).$ Then $$s_j(T)= \max_{\dim \, M=j} \left\{ \min_{x\in M,\, \|x\|=1} \|Tx\|\right\}. $$
    
\end{lemma}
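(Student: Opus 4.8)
The plan is to reduce the statement to the classical Courant--Fischer min--max theorem for the positive semidefinite matrix $T^*T = |T|^2$. The starting observation is that for every vector $x$ one has $\|Tx\|^2 = \langle T^*Tx, x\rangle = \langle |T|^2 x, x\rangle$, so $\|Tx\| = \langle |T|^2 x, x\rangle^{1/2}$. Since $t \mapsto \sqrt{t}$ is continuous and strictly increasing on $[0,\infty)$, it commutes with both the inner minimum and the outer maximum; hence it suffices to prove
\[ s_j^2(T) = \max_{\dim M = j}\, \min_{x \in M,\, \|x\| = 1} \langle |T|^2 x, x\rangle, \]
and the asserted identity then follows by taking square roots. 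Note that $s_1^2(T) \geq \cdots \geq s_n^2(T)$ are precisely the eigenvalues of the Hermitian operator $|T|^2$, since the $s_j(T)$ are the eigenvalues of $|T|$.

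Thus the problem becomes the standard min--max characterisation of the $j$-th largest eigenvalue $\lambda_j := s_j^2(T)$ of $|T|^2$. I would fix an orthonormal eigenbasis $u_1, \ldots, u_n$ with $|T|^2 u_i = \lambda_i u_i$ and $\lambda_1 \geq \cdots \geq \lambda_n$. For the lower bound, take $M_0 = \mathrm{span}\{u_1, \ldots, u_j\}$; writing a unit vector $x \in M_0$ as $x = \sum_{i=1}^j c_i u_i$ gives $\langle |T|^2 x, x\rangle = \sum_{i=1}^j \lambda_i |c_i|^2 \geq \lambda_j \sum_{i=1}^j |c_i|^2 = \lambda_j$, so the inner minimum over $M_0$ is at least $\lambda_j$, whence the outer maximum is $\geq \lambda_j$.

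For the reverse inequality I would use a dimension count: given any subspace $M$ with $\dim M = j$, set $N = \mathrm{span}\{u_j, u_{j+1}, \ldots, u_n\}$, so that $\dim N = n - j + 1$. Then $\dim M + \dim N = n+1 > n$, which forces $M \cap N \neq \{0\}$; choosing a unit vector $x \in M \cap N$ and expanding it in $u_j, \ldots, u_n$ yields $\langle |T|^2 x, x\rangle = \sum_{i \geq j} \lambda_i |c_i|^2 \leq \lambda_j$. Hence the inner minimum over every such $M$ is $\leq \lambda_j$, so the outer maximum is $\leq \lambda_j$. Combining the two bounds gives the eigenvalue identity, and taking square roots completes the argument.

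The only genuinely delicate point is the passage between the norm $\|Tx\|$ and the quadratic form $\langle |T|^2 x, x\rangle$: one must verify that the square root really does commute with the nested $\max$--$\min$, which it does precisely because $\sqrt{\cdot}$ is monotone and nonnegative. Everything else is the textbook Courant--Fischer argument, with the forced intersection $M \cap N \neq \{0\}$ serving as the essential mechanism behind the upper bound. (Alternatively, one may simply invoke the Hermitian min--max theorem for $|T|^2$ and take square roots, which is how a citation-only proof would read.)
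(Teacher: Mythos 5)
Your proof is correct. The paper offers no argument for this lemma at all --- it is quoted verbatim as \cite[Corollary III.1.2]{Bhatia} --- so there is nothing internal to compare against; what you have written is precisely the standard Courant--Fischer derivation that underlies that citation: pass from $\|Tx\|$ to the quadratic form $\langle |T|^2x,x\rangle$ via monotonicity of $t\mapsto\sqrt{t}$, then run the min--max characterisation of the eigenvalues $\lambda_j=s_j^2(T)$ of the Hermitian matrix $|T|^2$, with the dimension count $\dim M+\dim N=n+1>n$ forcing the intersection needed for the upper bound. All steps are sound (the inner minimum and outer maximum are attained by compactness in finite dimensions, so the square root does commute with both), and the argument is a faithful, self-contained substitute for the reference.
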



\begin{theorem}\label{th2-1}
    Let $T,S \in \mathcal{M}_n(\mathbb{C})$ and $e,f,g,h$ are non-negative continuous functions on $[0,\infty)$ such that $f(t)g(t)=t$ and $e(t)h(t)=t$ for all $t\geq 0.$ Then 
    \begin{eqnarray*}\label{}
        s_j(T+S) \leq \left\| g^2(|T^*|)+h^2(|S^*|) \right\|^{1/2} \, s_j^{1/2} \left(f^2(|T|)+e^2(|S|)  \right) \quad \text{for every $j=1,2, \ldots,n$}.
    \end{eqnarray*}
  Furthermore,
    \begin{eqnarray}\label{eq1}
        \|T+S\|_p \leq \left\| g^2(|T^*|)+h^2(|S^*|) \right\|^{1/2} \,  \left \|f^2(|T|)+e^2(|S|)  \right\|_{p/2}^{1/2} \quad \text{for every $p>0$ }.
    \end{eqnarray}
\end{theorem}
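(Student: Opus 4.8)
The plan is to obtain the singular-value inequality from a pointwise operator-norm estimate and then to pass to the Schatten norm by summing $p$-th powers. The two inputs are the generalized Cauchy--Schwarz inequality of Lemma \ref{lem2-2} and the Courant--Fischer min-max formula of Lemma \ref{lem2-3}.

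First I would fix a unit vector $x$ and write $\|(T+S)x\|=\sup_{\|y\|=1}|\langle Tx,y\rangle+\langle Sx,y\rangle|$. Applying Lemma \ref{lem2-2} to $T$ with the pair $(f,g)$ and to $S$ with the pair $(e,h)$, and invoking the triangle inequality, bounds the right-hand side by $\sqrt{\langle f^2(|T|)x,x\rangle}\,\sqrt{\langle g^2(|T^*|)y,y\rangle}+\sqrt{\langle e^2(|S|)x,x\rangle}\,\sqrt{\langle h^2(|S^*|)y,y\rangle}$. The elementary inequality $\sqrt{a_1b_1}+\sqrt{a_2b_2}\le\sqrt{(a_1+a_2)(b_1+b_2)}$ (which reduces to $(\sqrt{a_1b_2}-\sqrt{a_2b_1})^2\ge 0$, a form of Cauchy--Schwarz in $\mathbb{R}^2$) then decouples the $x$- and $y$-dependence. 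Writing $P=f^2(|T|)+e^2(|S|)\ge 0$, taking the supremum over unit $y$, and using $\langle Px,x\rangle=\|P^{1/2}x\|^2$ together with the fact that $\sup_{\|y\|=1}\langle (g^2(|T^*|)+h^2(|S^*|))y,y\rangle$ equals the operator norm of that positive operator, I obtain
\begin{equation*}
\|(T+S)x\|\le \bigl\| g^2(|T^*|)+h^2(|S^*|)\bigr\|^{1/2}\,\|P^{1/2}x\|
\end{equation*}
for every unit vector $x$.

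Next I would transfer this pointwise bound to singular values through Lemma \ref{lem2-3}. Setting $C=\|g^2(|T^*|)+h^2(|S^*|)\|^{1/2}$, the key observation is that from $\|(T+S)x\|\le C\|P^{1/2}x\|$ for all $x$ one can, for each $j$-dimensional subspace $M$, choose $x^*$ to minimize $\|P^{1/2}x\|$ over the unit sphere of $M$ and conclude $\min_{x\in M,\|x\|=1}\|(T+S)x\|\le \|(T+S)x^*\|\le C\|P^{1/2}x^*\|=C\min_{x\in M,\|x\|=1}\|P^{1/2}x\|$. Taking the maximum over all $M$ of dimension $j$ and applying Lemma \ref{lem2-3} on both sides yields $s_j(T+S)\le C\,s_j(P^{1/2})$, and since $P$ is positive, $s_j(P^{1/2})=s_j^{1/2}(P)=s_j^{1/2}(f^2(|T|)+e^2(|S|))$, which is the first claimed inequality. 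I expect this min-max transfer to be the main subtlety: a pointwise bound does not in general survive under $\min$ unless the same witnessing vector works on both sides, which is precisely why one selects the minimizer of the right-hand side rather than of the left.

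Finally, for the Schatten estimate I would raise the singular-value inequality to the power $p$ and sum over $j$, using the definition of the Schatten norm directly, namely $\|T+S\|_p^p=\sum_j s_j^p(T+S)\le C^p\sum_j s_j^{p/2}(P)=C^p\|P\|_{p/2}^{p/2}$. Taking $p$-th roots gives \eqref{eq1}. Since this last step uses only the definition $\|\cdot\|_p=(\sum_j s_j^{\,p})^{1/p}$ and no norm axioms or convexity, it remains valid for every $p>0$, not merely $p\ge 1$.
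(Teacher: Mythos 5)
Your proof is correct and takes essentially the same route as the paper: both establish the pointwise estimate $\|(T+S)x\|^2 \le \left\| g^2(|T^*|)+h^2(|S^*|) \right\| \, \langle (f^2(|T|)+e^2(|S|))x,x\rangle$ for unit vectors $x$ and then pass to singular values via the max-min formula of Lemma \ref{lem2-3} before summing $p$-th powers. The only cosmetic difference is that the paper reaches this estimate by adding the two positive $2\times 2$ block matrices furnished by Lemma \ref{lem2-2} together with Lemma \ref{lemma12} and then reapplying Lemma \ref{lemma12} to the sum, whereas you combine the two Cauchy--Schwarz bounds directly through the elementary inequality $\sqrt{a_1b_1}+\sqrt{a_2b_2}\le\sqrt{(a_1+a_2)(b_1+b_2)}$ --- equivalent devices --- and your explicit justification of why the pointwise bound survives the max-min characterization is in fact more careful than the paper's.
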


\begin{proof}
    From Lemma \ref{lemma12} and Lemma \ref{lem2-2}, we get that 
    $\begin{pmatrix}
        f^2(|T|) & T^*\\
        T& g^2(|T^*|)
    \end{pmatrix} \geq 0$ and $
    \begin{pmatrix}
        e^2(|S|) & S ^*\\
        S& h^2(|S^*|)
    \end{pmatrix} \geq 0.$ Therefore, $\begin{pmatrix}
        f^2(|T|)+e^2(|S|) & T^*+S^*\\
        T+S& g^2(|T^*|)+h^2(|S^*|)
    \end{pmatrix} \geq 0$
    and so again from Lemma \ref{lemma12}, we get
    $$|\langle (T+S)x,y \rangle|^2\leq \langle (f^2(|T|)+e^2(|S|))x,x\rangle \langle (g^2(|T^*|)+h^2(|S^*|)) y,y\rangle, \quad \text{for all } x,y\in \mathcal{H}.$$ Taking the supremum over $\|y\|=1$, we get
    $$\|(T+S)x\|^2\leq  \| g^2(|T^*|)+h^2(|S^*|)\| \langle (f^2(|T|)+e^2(|S|))x,x\rangle, \quad \text{for all $x\in \mathcal{H}.$} $$  This implies 
    $$ \max_{\dim \, M=j}  \min_{x\in M,\, \|x\|=1} \|(T+S)x\|^2\leq  \| g^2(|T^*|)+h^2(|S^*|)\| 
    \max_{\dim \, M=j}  \min_{x\in M,\, \|x\|=1} \langle (f^2(|T|)+e^2(|S|))x,x\rangle, $$ for every $j=1,2, \ldots,n.$
   Therefore, using Lemma \ref{lem2-3}, we get
    $$ s_j(T+S) \leq  \| g^2(|T^*|)+h^2(|S^*|)\|^{1/2} s_j^{1/2}(f^2(|T|)+e^2(|S|))\quad \text{for every $j=1,2, \ldots,n.$ } $$
    Now we have
    $$ \left( \sum_{j=1}^ns_j^p(T+S)\right)^{1/p} \leq  \| g^2(|T^*|)+h^2(|S^*|)\|^{1/2} \left( \sum_{j=1}^n  s_j^{p/2}(f^2(|T|)+e^2(|S|)) \right)^{1/p}.$$
    This gives \eqref{eq1}.
\end{proof}

From the inequality \eqref{eq1}, we deduce that

\begin{corollary} \label{cor2-1-1}
    Let $T,S \in \mathcal{M}_n(\mathbb{C})$. Then  
\begin{eqnarray}\label{eq1-1}
        \|T+S\|_p \leq \left\| |T^*|^{2t}+|S^*|^{2t} \right\|^{1/2}   \left \||T|^{2(1-t)}+|S|^{2(1-t)}  \right\|_{p/2}^{1/2} 
    \end{eqnarray}
    and 
    \begin{eqnarray}\label{eq2-1}
        \|T+S\|_p \leq \left\| |T^*|^{2t}+|S^*|^{2(1-t)} \right\|^{1/2}   \left \||T|^{2(1-t)}+|S|^{2t}  \right\|_{p/2}^{1/2},
    \end{eqnarray}
    for every $p>0$ and for all $t\in [0,1].$ 
    \end{corollary}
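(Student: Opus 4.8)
The plan is to specialize the general Schatten $p$-norm inequality \eqref{eq1} of Theorem \ref{th2-1} by choosing the four admissible functions $e, f, g, h$ appropriately. Since \eqref{eq1} is valid for \emph{any} non-negative continuous functions $e, f, g, h$ on $[0,\infty)$ subject only to the factorization constraints $f(t)g(t) = t$ and $e(t)h(t) = t$, both claimed inequalities should drop out by substituting suitable power functions and reading off the resulting operators.

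First, to obtain \eqref{eq1-1}, I would take $f(s) = e(s) = s^{1-t}$ and $g(s) = h(s) = s^{t}$. These are non-negative and continuous on $[0,\infty)$ for every fixed $t \in [0,1]$, and they satisfy $f(s)g(s) = s^{1-t}s^{t} = s$ and $e(s)h(s) = s^{1-t}s^{t} = s$, so the hypotheses of Theorem \ref{th2-1} are met. With this choice one has $g^2(|T^*|) = |T^*|^{2t}$, $h^2(|S^*|) = |S^*|^{2t}$, $f^2(|T|) = |T|^{2(1-t)}$ and $e^2(|S|) = |S|^{2(1-t)}$, and substituting these into \eqref{eq1} yields \eqref{eq1-1} at once.

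Second, for \eqref{eq2-1} I would keep $f(s) = s^{1-t}$ and $g(s) = s^{t}$ but interchange the exponents in the $S$-functions, taking $e(s) = s^{t}$ and $h(s) = s^{1-t}$. Both factorization constraints still hold, since now $e(s)h(s) = s^{t}s^{1-t} = s$, and one reads off $g^2(|T^*|) = |T^*|^{2t}$, $h^2(|S^*|) = |S^*|^{2(1-t)}$, $f^2(|T|) = |T|^{2(1-t)}$ and $e^2(|S|) = |S|^{2t}$. Feeding these into \eqref{eq1} gives \eqref{eq2-1}.

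There is no genuine analytic difficulty here; the entire content is the bookkeeping of selecting exponents so that the two factors of \eqref{eq1} take the prescribed forms while respecting the product constraints. The only point deserving a moment's attention is continuity at the origin and at the endpoints $t \in \{0,1\}$: the map $s \mapsto s^{a}$ is continuous on $[0,\infty)$ for every $a \geq 0$ (equal to $0$ at $s = 0$ when $a > 0$ and to the constant $1$ when $a = 0$), so each chosen function is admissible for all $t \in [0,1]$, and the product identities hold pointwise, including at $s = 0$. Hence Theorem \ref{th2-1} applies directly and both \eqref{eq1-1} and \eqref{eq2-1} follow.
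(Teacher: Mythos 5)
Your proposal is correct and is exactly the route the paper intends: Corollary \ref{cor2-1-1} is stated as an immediate specialization of \eqref{eq1}, and your choices $f(s)=s^{1-t}$, $g(s)=s^{t}$ with $e(s)=s^{1-t}$, $h(s)=s^{t}$ (for \eqref{eq1-1}) and $e(s)=s^{t}$, $h(s)=s^{1-t}$ (for \eqref{eq2-1}) are precisely the substitutions that produce the two stated bounds. The remark on continuity and the product constraints at the endpoints $t\in\{0,1\}$ is a harmless but welcome check.
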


    Considering $p\to \infty$ in Corollary \ref{cor2-1-1}, we get
    
    \begin{corollary} \label{cor3-1-1}
    Let $T,S \in \mathcal{M}_n(\mathbb{C})$. Then 
    \begin{eqnarray}\label{eq1-1-1}
        \|T+S\| \leq \left\| |T^*|^{2t}+|S^*|^{2t} \right\|^{1/2}   \left \||T|^{2(1-t)}+|S|^{2(1-t)}  \right\|_{}^{1/2} 
    \end{eqnarray}
    and 
    \begin{eqnarray}\label{eq2-1-1}
        \|T+S\| \leq \left\| |T^*|^{2t}+|S^*|^{2(1-t)} \right\|^{1/2}   \left \||T|^{2(1-t)}+|S|^{2t}  \right\|_{}^{1/2},
    \end{eqnarray}
     for all $t\in [0,1].$
\end{corollary}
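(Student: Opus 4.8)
The plan is to obtain Corollary \ref{cor3-1-1} as the limiting case $p \to \infty$ of Corollary \ref{cor2-1-1}, which in turn follows from Theorem \ref{th2-1} by a judicious choice of the four functions $e,f,g,h$. The only genuine analytic content needed here is the standard fact that $\|A\|_p \to \|A\| = s_1(A)$ as $p \to \infty$ for any fixed matrix $A \in \mathcal{M}_n(\mathbb{C})$; everything else is algebraic bookkeeping.

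First I would record the choice of functions that produces \eqref{eq1-1}. Fixing $t \in [0,1]$, set $f(s) = s^{1-t}$ and $g(s) = s^{t}$, so that $f(s)g(s) = s$ holds for all $s \ge 0$ and $f,g$ are non-negative continuous functions on $[0,\infty)$; then $f^2(|T|) = |T|^{2(1-t)}$ and $g^2(|T^*|) = |T^*|^{2t}$. Making the symmetric choice $e(s) = s^{1-t}$, $h(s) = s^{t}$ for the $S$-terms gives $e^2(|S|) = |S|^{2(1-t)}$ and $h^2(|S^*|) = |S^*|^{2t}$. Substituting these into the second conclusion \eqref{eq1} of Theorem \ref{th2-1} yields exactly \eqref{eq1-1}, valid for every $p > 0$ and all $t \in [0,1]$. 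For \eqref{eq2-1} I would instead choose the functions asymmetrically, namely $e(s) = s^{t}$ and $h(s) = s^{1-t}$ for the $S$-factor while keeping $f(s) = s^{1-t}$, $g(s) = s^{t}$ for the $T$-factor; this produces $e^2(|S|) = |S|^{2t}$ and $h^2(|S^*|) = |S^*|^{2(1-t)}$, and feeding these into \eqref{eq1} gives \eqref{eq2-1}. This establishes Corollary \ref{cor2-1-1}.

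Next I would pass to the limit $p \to \infty$ in each of \eqref{eq1-1} and \eqref{eq2-1}. The first factor on the right-hand side, $\left\| |T^*|^{2t}+|S^*|^{2t} \right\|^{1/2}$, is the operator norm and does not depend on $p$, so it is left untouched. On the left-hand side, $\|T+S\|_p \to \|T+S\|$, and in the remaining factor $\left\||T|^{2(1-t)}+|S|^{2(1-t)}\right\|_{p/2}^{1/2}$ the Schatten $(p/2)$-norm converges to the operator norm $\left\||T|^{2(1-t)}+|S|^{2(1-t)}\right\|^{1/2}$ as $p \to \infty$ (equivalently as $p/2 \to \infty$). Taking limits on both sides of the inequalities, which is legitimate since both sides converge and the inequality is preserved in the limit, yields \eqref{eq1-1-1} and \eqref{eq2-1-1} respectively, for all $t \in [0,1]$.

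Since each displayed inequality in Corollary \ref{cor3-1-1} is simply the $p \to \infty$ specialization of the corresponding inequality in Corollary \ref{cor2-1-1}, there is no substantive obstacle: the argument is entirely a matter of invoking the convergence $\|\cdot\|_q \to \|\cdot\|$ of Schatten norms to the operator norm as $q \to \infty$ on the finite-dimensional space $\mathcal{M}_n(\mathbb{C})$. The one point warranting a word of care is merely to confirm that the first (operator-norm) factor is genuinely $p$-independent so that the limit acts only on the two $p$-dependent pieces; once that is noted, the conclusion is immediate.
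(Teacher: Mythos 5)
Your proposal is correct and follows exactly the paper's route: the paper obtains Corollary \ref{cor3-1-1} precisely by letting $p\to\infty$ in Corollary \ref{cor2-1-1}, which itself is the specialization of Theorem \ref{th2-1} to the power functions $f(s)=s^{1-t}$, $g(s)=s^{t}$ (and the symmetric or swapped choices for $e,h$) that you identify. Your additional remarks on the $p$-independence of the operator-norm factor and the convergence $\|\cdot\|_{p/2}\to\|\cdot\|$ are the same (routine) justifications the paper leaves implicit.
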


\begin{remark} Let $T,S \in \mathcal{M}_n(\mathbb{C})$.
     In particular, for $t=1/2$, we get
     \begin{eqnarray}\label{p222}
        \|T+S\| \leq  \left\| |T^*|^{}+|S^*|^{} \right\|^{1/2}   \left \||T|^{}+|S|^{}  \right\|_{}^{1/2}.
     \end{eqnarray}
Moreover, if $T$ and $S$ are normal, then 
\begin{eqnarray}\label{p222-n}
        \|T+S\| \leq  \big \||T|^{}+|S|^{}  \big\|.
     \end{eqnarray}
\end{remark}

Using Theorem \ref{th2-1}, we now obtain the $p$-numerical radius bound.

\begin{theorem}\label{cor2-1}
    Let $T \in \mathcal{M}_n(\mathbb{C})$ and let $e,f,g,h$ are non-negative continuous functions on $[0,\infty)$ such that $f(t)g(t)=t$ and $e(t)h(t)=t$ for all $t\geq 0.$ Then
     \begin{eqnarray*}
        w_p(T) &\leq & \frac12 \left\| g^2(|T^*|)+h^2(|T|) \right\|^{1/2} \,  \left \|f^2(|T|)+e^2(|T^*|)  \right\|_{p/2}^{1/2}, \quad \text{for every $p>0$ }.
    \end{eqnarray*}
\end{theorem}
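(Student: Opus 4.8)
The plan is to derive the $p$-numerical radius bound directly from the Schatten $p$-norm inequality \eqref{eq1} in Theorem \ref{th2-1} by a rotation-and-supremum argument. Recall that by definition $w_p(T)=\max\{\|Re(\lambda T)\|_p : \lambda\in\mathbb{C},\,|\lambda|=1\}$, and $Re(\lambda T)=\frac12(\lambda T+\overline{\lambda}T^*)$. The idea is to apply \eqref{eq1} with the two summands chosen so that their sum is $\lambda T+\overline{\lambda}T^*$, and then to show that the functional-calculus factors on the right-hand side are unaffected by the unimodular rotation $\lambda$.

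First I would fix $\lambda\in\mathbb{C}$ with $|\lambda|=1$ and set $T_1=\lambda T$ and $S_1=\overline{\lambda}T^*$ in Theorem \ref{th2-1}, choosing the four functions so that the pairing $f(t)g(t)=t$, $e(t)h(t)=t$ reproduces the exponents appearing in the claimed bound. Concretely, take $f^2(t)=t^{2t}$-type factors on the $|T_1|,|S_1|$ side and $g^2,h^2$ on the adjoint side; the key observation is that for a unimodular scalar $\lambda$ one has $|\lambda T|=|T|$, $|(\lambda T)^*|=|T^*|$, $|\overline{\lambda}T^*|=|T^*|$, and $|(\overline{\lambda}T^*)^*|=|T|$. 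Consequently every term of the form $g^2(|T_1^*|)+h^2(|S_1^*|)$ and $f^2(|T_1|)+e^2(|S_1|)$ collapses to an expression in $|T|$ and $|T^*|$ alone, with no dependence on $\lambda$. Plugging these identities into \eqref{eq1} yields
\begin{eqnarray*}
\|\lambda T+\overline{\lambda}T^*\|_p \leq \left\| g^2(|T^*|)+h^2(|T|) \right\|^{1/2}\,\left\|f^2(|T|)+e^2(|T^*|)\right\|_{p/2}^{1/2},
\end{eqnarray*}
where the right-hand side is a constant independent of $\lambda$.

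Next I would divide both sides by $2$ to recognize $\frac12\|\lambda T+\overline{\lambda}T^*\|_p=\|Re(\lambda T)\|_p$, and then take the maximum over all $\lambda$ with $|\lambda|=1$. Since the right-hand bound does not involve $\lambda$, the supremum passes trivially through and we obtain $w_p(T)=\max_{|\lambda|=1}\|Re(\lambda T)\|_p\leq \frac12\|g^2(|T^*|)+h^2(|T|)\|^{1/2}\,\|f^2(|T|)+e^2(|T^*|)\|_{p/2}^{1/2}$, which is exactly the claimed inequality.

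The main obstacle I anticipate is purely bookkeeping rather than conceptual: one must verify carefully that the role-swap in the adjoints is correct, namely that $S_1=\overline{\lambda}T^*$ forces $|S_1|=|T^*|$ and $|S_1^*|=|T|$, so that the functions $e,h$ originally associated with $S$ in Theorem \ref{th2-1} end up evaluated at $|T^*|$ and $|T|$ respectively in the final bound. Getting this pairing right is what produces the asymmetry between the operator-norm factor (carrying $g^2,h^2$) and the Schatten $p/2$-norm factor (carrying $f^2,e^2$). Aside from this matching of function-to-operator assignments, the argument is a direct specialization of \eqref{eq1} combined with the elementary invariance of $|T|$ and $|T^*|$ under unimodular scaling, so no further estimates are needed.
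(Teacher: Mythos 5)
Your proposal is correct and is essentially identical to the paper's proof: the authors also substitute $e^{i\theta}T$ and $e^{-i\theta}T^*$ into Theorem \ref{th2-1}, use the invariance of $|T|$ and $|T^*|$ under unimodular scaling so the right-hand side is independent of $\theta$, and then take the supremum over $\theta$. (The brief aside about "choosing" the functions $e,f,g,h$ is unnecessary, since the theorem holds for the given general functions, but it does not affect the argument.)
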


\begin{proof}
    Replacing $T$ by $e^{i\theta}T$ and $S$ by $e^{-i\theta}S^*$ ($\theta \in \mathbb R$) in Theorem \ref{th2-1}, we get
    \begin{eqnarray*}
        \| Re( e^{i\theta}T)\|_p &\leq & \frac12 \left\| g^2(|T^*|)+h^2(|T|) \right\|^{1/2} \,  \left \|f^2(|T|)+e^2(|T^*|)  \right\|_{p/2}^{1/2}, \quad \text{for every $p>0$ }.
    \end{eqnarray*}
   Considering the supremum over $\theta \in \mathbb R$, we get the desired inequality.
\end{proof}

From Theorem \ref{cor2-1}, we deduce that
\begin{corollary}\label{cor2-2}
Let $T \in \mathcal{M}_n(\mathbb{C})$, then
    \begin{eqnarray*}\label{eq2}
        w_p(T) &\leq& \frac12\left\| |T|^{2(1-t)}+|T^*|^{2(1-t)} \right\|^{1/2} \,  \left \||T|^{2t}+|T^*|^{2t}  \right\|_{p/2}^{1/2},
      \end{eqnarray*}  
  \text{for every $p>0$ and  for all $t\in [0,1]$ }.
Considering $p\to \infty$, we get
    \begin{eqnarray}\label{cor2-2-1}
        w^2(T) &\leq& \frac14\left\| |T|^{2(1-t)}+|T^*|^{2(1-t)} \right\|^{} \,  \left \||T|^{2t}+|T^*|^{2t}  \right\|_{}^{},\quad  \text{ for all $t\in [0,1]$ }.
      \end{eqnarray}  

\end{corollary}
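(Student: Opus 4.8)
The plan is to derive Corollary \ref{cor2-2} as a direct specialization of Theorem \ref{cor2-1} by making an appropriate choice of the four functions $e,f,g,h$. The statement to be proved is
\begin{eqnarray*}
 w_p(T) &\leq& \frac12\left\| |T|^{2(1-t)}+|T^*|^{2(1-t)} \right\|^{1/2} \,  \left \||T|^{2t}+|T^*|^{2t}  \right\|_{p/2}^{1/2},
\end{eqnarray*}
for every $p>0$ and all $t\in[0,1]$, together with its $p\to\infty$ consequence \eqref{cor2-2-1}. The key observation is that Theorem \ref{cor2-1} already delivers a bound of exactly this shape, provided the pairs of functions satisfy the factorization constraints $f(t)g(t)=t$ and $e(t)h(t)=t$. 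So the whole argument reduces to exhibiting functions whose squares produce the desired powers of $|T|$ and $|T^*|$.

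First I would fix $t\in[0,1]$ and set $f(s)=s^{t}$, $g(s)=s^{1-t}$, $e(s)=s^{t}$, and $h(s)=s^{1-t}$ for $s\geq 0$. These are non-negative and continuous on $[0,\infty)$, and they satisfy the required constraints since $f(s)g(s)=s^{t}\cdot s^{1-t}=s=s^{t}\cdot s^{1-t}=e(s)h(s)$. With this choice one has $f^2(|T|)=|T|^{2t}$, $g^2(|T^*|)=|T^*|^{2(1-t)}$, $e^2(|T^*|)=|T^*|^{2t}$, and $h^2(|T|)=|T|^{2(1-t)}$. Substituting these into the conclusion of Theorem \ref{cor2-1} then gives
\begin{eqnarray*}
 w_p(T) \leq \frac12 \left\| |T^*|^{2(1-t)}+|T|^{2(1-t)} \right\|^{1/2} \left\| |T|^{2t}+|T^*|^{2t} \right\|_{p/2}^{1/2},
\end{eqnarray*}
which is precisely the claimed inequality after reordering the summands inside the first norm.

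Finally, to obtain \eqref{cor2-2-1}, I would let $p\to\infty$. Recall from the introduction that for $p=\infty$ the Schatten $p$-norm coincides with the operator norm and $w_p(T)=w(T)$, so the left side tends to $w(T)$. On the right side, $\left\| \cdot \right\|_{p/2}\to\left\|\cdot\right\|$ as $p\to\infty$, so the factor $\left\||T|^{2t}+|T^*|^{2t}\right\|_{p/2}^{1/2}$ converges to $\left\||T|^{2t}+|T^*|^{2t}\right\|^{1/2}$. Taking the limit and squaring both sides yields \eqref{cor2-2-1}. I expect no serious obstacle here: the only point requiring mild care is the passage $p\to\infty$ for the Schatten norm, which is standard, and confirming that the chosen exponents are admissible even at the boundary cases $t=0$ and $t=1$ (where one of $f,g$ reduces to the constant function $1$ or to the identity), but the factorization identities remain valid throughout.
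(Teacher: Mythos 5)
Your proposal is correct and coincides with the paper's (implicit) argument: Corollary \ref{cor2-2} is stated in the paper as a direct deduction from Theorem \ref{cor2-1}, and the intended specialization is exactly your choice $f(s)=e(s)=s^{t}$, $g(s)=h(s)=s^{1-t}$, followed by the standard limit $\|\cdot\|_{p/2}\to\|\cdot\|$ and $w_p\to w$ as $p\to\infty$. No gaps.
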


\begin{remark}
For $T \in \mathcal{M}_n(\mathbb{C})$, we see that
    \begin{eqnarray*}
         \frac14\left\| |T|^{2(1-t)}+|T^*|^{2(1-t)} \right\|^{} \,  \left \||T|^{2t}+|T^*|^{2t}  \right\|_{}^{} &=& \left\| \frac{|T|^{2(1-t)}+|T^*|^{2(1-t)}}{2} \right\|^{}   \left \|\frac{|T|^{2t}+|T^*|^{2t} }{2} \right\|_{}^{} \\
         &\leq& \left\| \frac{|T|^{2}+|T^*|^{2}}{2} \right\|^{1-t}   \left \|\frac{|T|^{2}+|T^*|^{2} }{2} \right\|_{}^{t} \\
&=& \frac12 \left\| |T|^2+ |T^*|^2\right\| \quad \textit{for all $t\in [0,1]$}.
\end{eqnarray*}
Therefore, the bound \eqref{cor2-2-1} refines the well known bound $w^2(T) \leq \frac12 \left\| |T|^2+ |T^*|^2\right\|,$ given in \cite{Kittaneh_STD_2005}. Also, we remark that the bound \eqref{k1} follows from \eqref{cor2-2-1} by setting $t=1/2.$
\end{remark}

From Theorem \ref{cor2-1}, we also deduce that
\begin{corollary}\label{cor2-2-10}
Let $T \in \mathcal{M}_n(\mathbb{C})$, then
    \begin{eqnarray*}\label{eq2.}
        w_p(T) &\leq& \frac12\left\| |T|^{2t}+|T^*|^{2(1-t)} \right\|^{1/2} \,  \left \||T|^{2t}+|T^*|^{2(1-t)}  \right\|_{p/2}^{1/2},
      \end{eqnarray*}  
  \text{for every $p>0$ and  for all $t\in [0,1]$ }.

\end{corollary}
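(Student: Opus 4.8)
The plan is to obtain this corollary as a direct specialization of Theorem~\ref{cor2-1}, choosing the four auxiliary functions $e,f,g,h$ so that the operator-norm factor and the Schatten factor appearing there collapse to the \emph{same} expression $|T|^{2t}+|T^*|^{2(1-t)}$. Recall that in Theorem~\ref{cor2-1} the first factor involves $g^2(|T^*|)+h^2(|T|)$ and the second involves $f^2(|T|)+e^2(|T^*|)$. To force both to equal $|T|^{2t}+|T^*|^{2(1-t)}$, I would cross-couple the exponents: have $f$ and $h$ carry the power $t$ and have $e$ and $g$ carry the power $1-t$.

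Concretely, for a fixed $t\in[0,1]$ I would set $f(\tau)=\tau^{t}$, $h(\tau)=\tau^{t}$, $e(\tau)=\tau^{1-t}$ and $g(\tau)=\tau^{1-t}$ for $\tau\ge 0$. These are non-negative continuous functions on $[0,\infty)$, and the hypotheses of Theorem~\ref{cor2-1} hold, since $f(\tau)g(\tau)=\tau^{t}\,\tau^{1-t}=\tau$ and $e(\tau)h(\tau)=\tau^{1-t}\,\tau^{t}=\tau$. With this choice the operator-norm factor becomes $g^2(|T^*|)+h^2(|T|)=|T^*|^{2(1-t)}+|T|^{2t}$ and the Schatten factor becomes $f^2(|T|)+e^2(|T^*|)=|T|^{2t}+|T^*|^{2(1-t)}$, so both reduce to $|T|^{2t}+|T^*|^{2(1-t)}$ and Theorem~\ref{cor2-1} yields exactly the claimed inequality.

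Since the result is a pure specialization, there is no genuine analytic obstacle; the only subtlety is the bookkeeping. One must pair $f,h$ with the power $t$ and $e,g$ with the power $1-t$, rather than making the symmetric choice $f=e=\tau^{t}$, $g=h=\tau^{1-t}$, which instead reproduces the different bound of Corollary~\ref{cor2-2}. Thus the essential content is the correct matching of the two halves of Theorem~\ref{cor2-1}, after which the inequality follows with no further computation.
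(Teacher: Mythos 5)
Your proposal is correct and is exactly the paper's (implicit) argument: Corollary \ref{cor2-2-10} is stated as a direct deduction from Theorem \ref{cor2-1}, and the cross-coupled choice $f(\tau)=h(\tau)=\tau^{t}$, $e(\tau)=g(\tau)=\tau^{1-t}$ satisfies $fg=eh=\mathrm{id}$ and makes both factors equal to $|T|^{2t}+|T^*|^{2(1-t)}$, as required. Your side remark correctly identifies that the symmetric pairing instead yields Corollary \ref{cor2-2}.
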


Considering $p\to \infty$, we get
    $w(T) \leq \frac12   \left \||T|^{2t}+|T^*|^{2(1-t)}  \right\|_{}^{},$ $ \text{ for all $t\in [0,1]$ },$ which is also proved in \cite[Theorem 1]{El-Haddad and F. Kittaneh}.
Now from Corollary \ref{cor2-2} (for $t=1/2$), we get

\begin{corollary}\label{cor2-3}
Let $T \in \mathcal{M}_n(\mathbb{C})$, then
    \begin{eqnarray*}\label{eq3}
        w_p(T) &\leq& \frac12\left\| |T|+|T^*| \right\|^{1/2} \,  \left \||T|+|T^*|  \right\|_{p/2}^{1/2}, \quad \text{\text{for every $p>0$. }}
      \end{eqnarray*}  
   
\end{corollary}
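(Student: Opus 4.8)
The plan is to obtain this bound as an immediate specialization of Corollary \ref{cor2-2} at the parameter value $t=\tfrac12$. First I would recall the general bound established there, namely that for every $p>0$ and all $t\in[0,1]$,
$$w_p(T) \leq \frac12\left\| |T|^{2(1-t)}+|T^*|^{2(1-t)} \right\|^{1/2} \left\||T|^{2t}+|T^*|^{2t}\right\|_{p/2}^{1/2}.$$
The only step is then to substitute $t=\tfrac12$ and simplify the exponents.

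Since $2(1-t)=2t=1$ when $t=\tfrac12$, both operator expressions inside the norms collapse: $|T|^{2(1-t)}+|T^*|^{2(1-t)}$ and $|T|^{2t}+|T^*|^{2t}$ each become $|T|+|T^*|$. Feeding this into the displayed inequality yields
$$w_p(T)\leq \frac12\left\| |T|+|T^*| \right\|^{1/2}\left\| |T|+|T^*| \right\|_{p/2}^{1/2},$$
which is exactly the assertion of Corollary \ref{cor2-3}. The only thing one must check is that $t=\tfrac12$ lies in the admissible range $[0,1]$, which it plainly does, so the conclusion holds for every $p>0$.

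I do not expect any genuine obstacle here, as the result is a one-line specialization rather than a new estimate. For completeness one could instead derive it directly from Theorem \ref{cor2-1} by taking $f(t)=g(t)=e(t)=h(t)=\sqrt{t}$, which satisfy the constraints $f(t)g(t)=t$ and $e(t)h(t)=t$ and give $f^2(|T|)=h^2(|T|)=|T|$ together with $g^2(|T^*|)=e^2(|T^*|)=|T^*|$; substituting these into the statement of Theorem \ref{cor2-1} reproduces the same bound. Since Corollary \ref{cor2-2} is already available, the substitution route is the cleanest and is the one I would present.
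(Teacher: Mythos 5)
Your proposal is correct and matches the paper's own derivation exactly: the paper obtains this corollary by setting $t=\tfrac12$ in Corollary \ref{cor2-2}, which is precisely your substitution. Nothing further is needed.
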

Taking $p\to \infty$, we get the existing bound \eqref{k1}.
Considering $T=S$ in \eqref{eq1-1}, we get
\begin{corollary}\label{p-norm}
    Let $T \in \mathcal{M}_n(\mathbb{C})$, then
    \begin{eqnarray}
        \|T\|_p &\leq& \|T\|^{t} \|T\|^{1-t}_{p(1-t)}, \quad \text{for every $p>0$ and for all $t\in (0,1)$.}
    \end{eqnarray}
    In particular, for $t=1/2$, 
    \begin{eqnarray}\label{appl}
        \|T\|_p  &\leq& \sqrt{ \|T\|^{} \|T\|^{}_{p/2} },\quad \text{for every $p>0$.}
    \end{eqnarray}
\end{corollary}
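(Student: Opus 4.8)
The plan is to specialize the Schatten $p$-norm inequality \eqref{eq1-1} to the case $S=T$ and then simplify each of the three resulting norms. First I would substitute $S=T$ into \eqref{eq1-1}, which produces
\[
\|2T\|_p \le \left\| 2|T^*|^{2t}\right\|^{1/2}\,\left\|2|T|^{2(1-t)}\right\|_{p/2}^{1/2}.
\]
On the left, homogeneity of the singular values gives $\|2T\|_p = 2\|T\|_p$, and each factor on the right contributes a scalar $\sqrt2$; these two factors combine to $2$ and will cancel against the $2$ on the left. The problem thus reduces to identifying the two remaining norms as powers of familiar quantities.

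For the operator-norm factor I would use that $|T^*|^{2t}$ is positive with operator norm $\||T^*|\|^{2t}=\|T^*\|^{2t}=\|T\|^{2t}$, so that $\left\||T^*|^{2t}\right\|^{1/2}=\|T\|^{t}$. For the Schatten factor I would unwind the definition $\|A\|_q=(\operatorname{trace}|A|^q)^{1/q}$ applied to the positive operator $A=|T|^{2(1-t)}$: since $A$ is positive, $|A|=A$ and
\[
\left\||T|^{2(1-t)}\right\|_{p/2}=\Big(\operatorname{trace}\,|T|^{(1-t)p}\Big)^{2/p}=\|T\|_{p(1-t)}^{2(1-t)},
\]
where the last equality uses $\operatorname{trace}|T|^{p(1-t)}=\|T\|_{p(1-t)}^{p(1-t)}$. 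Taking the square root gives $\left\||T|^{2(1-t)}\right\|_{p/2}^{1/2}=\|T\|_{p(1-t)}^{1-t}$.

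Combining these, the specialized inequality becomes $2\|T\|_p\le \sqrt2\,\|T\|^{t}\cdot\sqrt2\,\|T\|_{p(1-t)}^{1-t}$, and dividing by $2$ yields the desired bound $\|T\|_p\le \|T\|^{t}\|T\|_{p(1-t)}^{1-t}$; the case $t=1/2$ then gives $\|T\|_p\le\sqrt{\|T\|\,\|T\|_{p/2}}$ directly. The only step requiring real care is the exponent bookkeeping in the Schatten factor, where the outer exponent $1/2$, the inner power $2(1-t)$, and the index shift from $p/2$ to $p(1-t)$ must all be tracked consistently through the trace formula; the remaining manipulations are purely routine substitutions.
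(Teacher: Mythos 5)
Your proposal is correct and follows exactly the paper's route: the paper obtains this corollary precisely by setting $S=T$ in \eqref{eq1-1} (it states this without writing out the details, which you have filled in accurately, including the cancellation of the scalar factors and the exponent bookkeeping $\left\||T|^{2(1-t)}\right\|_{p/2}^{1/2}=\|T\|_{p(1-t)}^{1-t}$).
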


\section{An application to estimate the energy of a graph}\label{sec4}

\noindent 

As an application of the Schatten $p$-norm inequalities obtained above, we develop a lower bound for the energy of a simple graph, which was introduced by Gutman \cite{Gutman} in connection to the
total $\pi$-electron energy. For details on the general theory of the total $\pi$-electron energy, as well as its chemical applications, the reader can see \cite{Appl1, Appl2}. Let $G$ be a simple (undirected) graph with $n$ vertices $v_1,v_2,\ldots, v_n$ and $m$ edges $e_1,e_2,\ldots,e_m.$ 
Let $d_i$ denote the degree of the vertex $v_i$ for $i=1,2,\ldots,n.$
The adjacency matrix associated with the graph $G$, denoted as $\textit{Adj}(G)$, is defined as $\textit{Adj}(G)=[a_{ij}]_{n\times n}$, where $a_{ij} = 1$ if $ v_i \sim v_j$ (i.e., $v_i$ is adjacent to $v_j$)  and $a_{ij} = 0$ otherwise. Clearly, $\textit{Adj}(G)$ is a self-adjoint matrix with entries $0,1$ and the main diagonal entries are zero. Let $\lambda_1(G)\geq  \lambda_2(G)\geq \ldots \geq \lambda_n(G)$ be the eigenvalues of $\textit{Adj}(G)$. It is well known that $\lambda_1(G)> 0$ and so $\| \textit{Adj}(G) \|= \lambda_1(G).$ In 2019, Bhunia et al. \cite[Theorem 2.2]{Bhunia_E} proved that
\begin{eqnarray}\label{eig}
    \| \textit{Adj}(G) \| &\leq & \sqrt{ \max_{1\leq i \leq n} \left\{ \sum_{j, v_i \sim v_j}d_j\right\} }.
\end{eqnarray}
The energy of the graph $G$, denoted as $\mathcal{E}(G)$, is defined as $\mathcal{E}(G)=\sum_{i=1}^n|\lambda_i(G)|.$
The search of lower and upper bounds for $\mathcal{E}(G)$ is a wide subfield of the spectral graph theory. In 1971,  McClelland \cite{Energy} provided an upper bound that 
   $ \mathcal{E}(G) \leq \sqrt{2mn}.$
After that various bounds have been studied, we refer to see \cite{Jahan, Rada1} and the references therein. Recently, Bhunia \cite{Bhunia_G} showed that 
    $ \mathcal{E}(G) \leq  \sqrt{2m \left(\textit{rank Adj}(G) \right)}.$
Clearly, $\sqrt{2m \left(\textit{rank Adj}(G) \right)} <\sqrt{2mn} $ for every singular graph $G$. A lower bound is provided in \cite{C_lower} that
    $ \mathcal{E}(G) \geq 2\sqrt{m}.  $
We now develop a new lower bound of $\mathcal{E}(G)$ in terms of the degree of the vertices and the number of edges.

\begin{theorem}\label{Graph-lower bound}
Let $G$ be a simple graph with $m$ edges and $n$ vertices $v_1,v_2,\ldots,v_n$ such that degree of $v_i$ is $d_i$ for each $i=1,2,\ldots,n.$ Then
 \begin{eqnarray*}
     \mathcal{E}(G) &\geq&  \frac{2m}{   \sqrt{ \max_{1\leq i \leq n} \left\{ \sum_{j, v_i \sim v_j}d_j\right\}}  }.  
\end{eqnarray*}
\end{theorem}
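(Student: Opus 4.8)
The plan is to connect the energy $\mathcal{E}(G)$ to a Schatten-norm quantity of the adjacency matrix and then invoke the eigenvalue bound \eqref{eig} together with the simplest consequence of the Schatten $p$-norm inequalities, namely Corollary \ref{p-norm}. First I would record the basic spectral facts: since $\textit{Adj}(G)$ is self-adjoint with real eigenvalues $\lambda_i(G)$, its singular values are $s_i = |\lambda_i(G)|$, so that $\mathcal{E}(G) = \sum_{i=1}^n |\lambda_i(G)| = \|\textit{Adj}(G)\|_1$ is exactly the Schatten $1$-norm. Moreover the trace of $\textit{Adj}(G)^2$ counts twice the edges: $\sum_{i=1}^n \lambda_i^2(G) = \mathrm{trace}\,\textit{Adj}(G)^2 = 2m$, which is the standard identity $\|\textit{Adj}(G)\|_2^2 = 2m$.

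**Applying the Schatten-norm inequality.**

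The key step is to apply inequality \eqref{appl} from Corollary \ref{p-norm}, that is $\|T\|_p \leq \sqrt{\|T\|\,\|T\|_{p/2}}$, with $T = \textit{Adj}(G)$ and $p = 2$. This yields
\begin{eqnarray*}
\|\textit{Adj}(G)\|_2 \leq \sqrt{\|\textit{Adj}(G)\|\,\|\textit{Adj}(G)\|_1}.
\end{eqnarray*}
Squaring and substituting the two identities above gives
\begin{eqnarray*}
2m = \|\textit{Adj}(G)\|_2^2 \leq \|\textit{Adj}(G)\|\,\|\textit{Adj}(G)\|_1 = \|\textit{Adj}(G)\|\cdot \mathcal{E}(G),
\end{eqnarray*}
so that
\begin{eqnarray*}
\mathcal{E}(G) \geq \frac{2m}{\|\textit{Adj}(G)\|}.
\end{eqnarray*}
Finally I would bound the operator norm $\|\textit{Adj}(G)\| = \lambda_1(G)$ from above using \eqref{eig}, which gives $\|\textit{Adj}(G)\| \leq \sqrt{\max_{1\leq i\leq n}\{\sum_{j,\,v_i\sim v_j} d_j\}}$; since this quantity sits in the denominator, the upper bound on the norm produces the desired lower bound on $\mathcal{E}(G)$.

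**Where the difficulty lies.**

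Most of the work here is bookkeeping: the two spectral identities are elementary and \eqref{eig} is quoted from \cite{Bhunia_E}, so the genuine content is entirely packaged inside Corollary \ref{p-norm}, which in turn descends from the Schatten $p$-norm inequality of Theorem \ref{th2-1}. The one point requiring care is verifying that $\|\textit{Adj}(G)\| > 0$ so that the division is legitimate; this follows from $\lambda_1(G) > 0$, which holds for any graph with at least one edge (and if $m=0$ the asserted bound is vacuous). I expect no real obstacle beyond confirming that the chain $\mathcal{E}(G) = \|\textit{Adj}(G)\|_1$, $2m = \|\textit{Adj}(G)\|_2^2$ interacts correctly with the exponents in \eqref{appl} — the main subtlety being simply that one must take $p = 2$ so that $p/2 = 1$ converts the Schatten $1$-norm on the right-hand side precisely into the energy.
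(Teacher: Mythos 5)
Your proposal is correct and follows exactly the paper's own argument: both apply the inequality \eqref{appl} with $p=2$ to $\textit{Adj}(G)$, use $2m=\|\textit{Adj}(G)\|_2^2$ and $\mathcal{E}(G)=\|\textit{Adj}(G)\|_1$, and then bound $\|\textit{Adj}(G)\|$ via \eqref{eig}. Your additional remark on the legitimacy of the division (the $m=0$ case) is a minor point the paper leaves implicit.
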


\begin{proof}
    From \eqref{appl}, we obtain that
    $\| \textit{Adj}(G) \|_p^2\leq \| \textit{Adj}(G) \| \| \textit{Adj}(G) \|_{p/2},$  \text{for every $p>0$}. In particular, for $p=2$, we get
    \begin{eqnarray*}
      2m= \sum_{i=1}^n d_i=  \text{trace} \, |\textit{Adj}(G)|^2 &=& \| \textit{Adj}(G) \|_2^2\\
      &\leq& \| \textit{Adj}(G) \| \| \textit{Adj}(G) \|_{1}\\
      &=& \| \textit{Adj}(G) \| \sum_{i=1}^n | \lambda_i(G)|\\
      &\leq& \sum_{i=1}^n | \lambda_i(G)|  \sqrt{ \max_{1\leq i \leq n} \left\{ \sum_{j, v_i \sim v_j}d_j\right\}}  \quad (\text{by \eqref{eig}}).
    \end{eqnarray*}
    Therefore,    
     $\mathcal{E}(G)=\sum_{i=1}^n | \lambda_i(G)| \geq \frac{2m}{   \sqrt{ \max_{1\leq i \leq n} \left\{ \sum_{j, v_i \sim v_j}d_j\right\}}  },  $ as desired.
\end{proof}

\begin{remark}
Clearly, Theorem \ref{Graph-lower bound} gives better bound than the bound  $ \mathcal{E}(G) \geq 2\sqrt{m}  $ if $$m>  \sqrt{ \max_{1\leq i \leq n} \left\{ \sum_{j, v_i \sim v_j}d_j\right\}}.$$ 
We consider two graphs $G_1$ and $G_2$ and their adjacency matrices (known as Huckel matrices \cite{JMNT}) associated with the molecular structure of the carbon skeleton of $1,2$-divinylcyclobutadiene and $1,4$-divinylbenzene, respectively.
 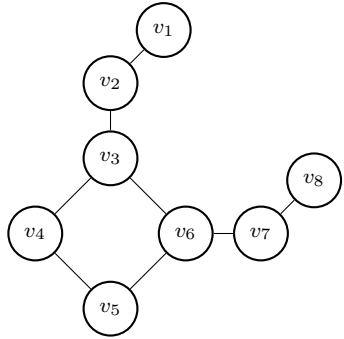
\begin{figure}[!h]
 \begin{tikzpicture}
        [auto=left,every node/.style={circle, draw=black, thick, minimum size=0.0cm}]
        \node (v4) at (0,0) {\tiny{$v_4$}};
        \node (v3) at (1,1) {\tiny{$v_3$}};
        \node (v6) at (2,0) {\tiny{$v_6$}};
        \node (v5) at (1,-1){\tiny{$v_5$}};
				\node (v2) at (1,2) {\tiny{$v_2$}};
				\node (v1) at (1.707,2.707) {\tiny{$v_1$}};
				\node (v7) at (3,0) {\tiny{$v_7$}};
				\node (v8) at (3.707,0.707) {\tiny{$v_8$}};
        \draw[-] (v1) to (v2);
        \draw[-] (v2) to (v3);
        \draw[-] (v3) to (v4);
				\draw[-] (v3) to (v6);
				\draw[-] (v4) to (v5);
			  \draw[-] (v5) to (v6); 
				\draw[-] (v6) to (v7);
				\draw[-] (v7) to (v8);


	
	\end{tikzpicture}
	 \caption{Graph $G_1$ } 
	\end{figure}

		\begin{figure}[!h]
	\begin{tikzpicture}
        [auto=left,every node/.style={circle, draw=black, thick, minimum size=0.0cm}]
        \node (v6) at (0,0) {\tiny {$v_6$}};
        \node (v7) at (.707,.707) {\tiny{$v_7$}};
        \node (v8) at (.707,1.707) {\tiny{$v_8$}};
        \node (v3) at (0,2.414) {\tiny{$v_3$}};
				\node (v4) at (-.707,1.7072) {\tiny{$v_4$}};
				\node (v5) at (-.707,.707) {\tiny{$v_5$}};
				\node (v2) at (0,3.414) {\tiny{$v_2$}};
				\node (v1) at (0,4.414) {\tiny{$v_1$}};
				\node (v9) at (0,-1) {\tiny{$v_9$}};
				\node (v10) at (0,-2) {\tiny{$v_{10}$}};
				
        \draw[-] (v1) to (v2);
        \draw[-] (v2) to (v3);
        \draw[-] (v3) to (v4);
        \draw[-] (v4) to (v5);
				\draw[-] (v5) to (v6);
			  \draw[-] (v6) to (v7); 
				\draw[-] (v8) to (v7);
				\draw[-] (v3) to (v8);
				\draw[-] (v6) to (v9);
				\draw[-] (v9) to (v10);
		
\end{tikzpicture}
\caption{Graph $G_2$ } 
\end{figure}
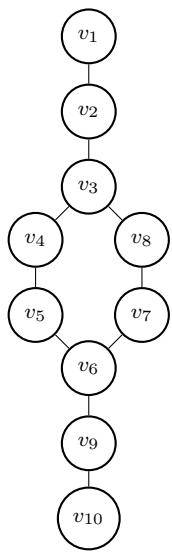

From Theorem \ref{Graph-lower bound} we get $\mathcal{E}(G_1)\geq \frac{16}{\sqrt{7}}$ and $\mathcal{E}(G_2)\geq \frac{20}{\sqrt{6}}$, whereas the existing bound $ \mathcal{E}(G) \geq 2\sqrt{m}  $  gives $\mathcal{E}(G_1)\geq 2\sqrt{8}$ and $\mathcal{E}(G_2)\geq 2\sqrt{10}.$
\end{remark}

\bigskip

\noindent \textbf{Data availability statements}
Data sharing not applicable to this article as no datasets were generated or analysed during the current study.

\noindent \textbf{Competing Interests}
The authors have no relevant financial or non-financial interests to disclose.


\end{document}